\DeclareMathAlphabet\bscal{OMS}{cmsy}{b}{n}
\DeclareMathAlphabet\mathbfscr{OMS}{mdugm}{b}{n}
\newtheorem{theorem}[equation]
{Theorem}
\newtheorem{proposition}
[equation]{Proposition}
\newtheorem{lemma}
[equation]{Lemma}
[equation]{Corollary}
\newtheorem{asum}[equation]{Assumption}{\bfseries}{\upshape}
\newtheorem{alg}[equation]{Algorithm}{\bfseries}{\upshape}
\newtheorem{remark}[equation]{Remark}
\newtheorem{definition}[equation]{Definition}
\numberwithin{equation}{section}
\providecommand{\meantmp}[2]{#1\langle{#2}#1\rangle}
\providecommand{\mean}[1]{\meantmp{}{#1}}
\providecommand{\jumptmp}[2]{#1\llbracket{#2}#1\rrbracket}
\providecommand{\jump}[1]{\jumptmp{}{#1}}
\providecommand{\avgtmp}[2]{#1\{{#2}#1\}}
\providecommand{\avg}[1]{\avgtmp{}{#1}}
\providecommand{\PiDG}{{\Uppi_{h}^{\ell}}}
\providecommand{\PiDGn}{{\Uppi_{h_n}^{\ell}}}
\providecommand{\Vo}{\smash{\mathaccent23 V}}
\providecommand{\Ho}{\smash{\mathaccent23 H}}
\providecommand{\Qo}{\smash{\mathaccent23 Q}}
\providecommand{\Xhk}{\smash{X_h^\ell}}
\providecommand{\Vhk}{\smash{V_h^\ell}}
\providecommand{\Qhkc}{\smash{Q_{h,c}^\ell}}
\providecommand{\Vhnk}{\smash{V_{h_n}^\ell}}
\providecommand{\Qhnkco}{\smash{{\mathaccent23 Q}_{h_n,c}^\ell}}
\providecommand{\SSS}{\boldsymbol{\mathcal{S}}}
\newcommand{\Ghk}{\boldsymbol{\mathcal{G}}_h^\ell}
\newcommand{\Ghnk}{\boldsymbol{\mathcal{G}}_{h_n}^\ell\! }
\newcommand{\Dhk}{\boldsymbol{\mathcal{D}}_h^\ell}
\newcommand{\Dhnk}{\boldsymbol{\mathcal{D}}_{h_n}^\ell}
\newcommand{\Divhk}{\mathcal{D}\dot{\iota}\nu_h^\ell}
\newcommand{\Divhnk}{\mathcal{D}\dot{\iota}\nu_{h_n}^\ell}
\newcommand{\Rhk}{\boldsymbol{\mathcal{R}}_h^\ell}
\newcommand{\Rhnk}{\boldsymbol{\mathcal{R}}_{h_n}^\ell}
\newcommand{\Rhks}{\boldsymbol{\mathcal{R}}_h^{\smash{\ell,{\textup{sym}}}}}
\newcommand{\Rhnks}{\boldsymbol{\mathcal{R}}_{h_n}^{\smash{\ell,{\textup{sym}}}}}
\newcommand{\WDG}{W^{1,p}(\mathcal{T}_h)}
\newcommand{\WDGn}{W^{1,p}(\mathcal{T}_{h_n})}
\providecommand{\divo}{\mathrm{div}\,}
\providecommand{\tr}{\mathrm{tr}\,}
\begin{document}

	\title[Analysis of a non-conforming approximation of evolution equations]{Analysis of a fully-discrete, non-conforming approximation of evolution equations and applications\vspace{-1mm}}
	
	\author{A.~Kaltenbach and M.~\Ruzicka}
	\email{alex.kaltenbach@mathematik.uni-freiburg.de}
	\address{Institute of Applied Mathematics, Albert--Ludwigs--University Freiburg, Ernst--Zermelo--Straße 1,\\ 79104 Freiburg, Germany\vspace{-1mm}}
	\email{rose@mathematik.uni-freiburg.de}

	\begin{abstract}
		In this paper, we consider a fully-discrete approximation of an abstract
		evolution equation deploying a non-conforming spatial approximation
		and finite differences in time (Rothe--Galerkin method). The main
		result is the convergence of the discrete solutions to a weak
		solution of the continuous problem. Therefore, the result can be
		interpreted either as a justification of the numerical
		method~or~as~an~alternative way of constructing weak solutions. We
		formulate the problem in the very general and abstract setting of
		so-called non-conforming Bochner pseudo-monotone operators, which allows for a unified treatment of several evolution problems. 
		Our abstract results for non-conforming Bochner pseudo-monotone operators allow to establish (weak) convergence just by verifying a few natural assumptions on the operators time-by-time and on the discretization spaces. Hence, applications and extensions to several other evolution problems can be  performed easily. We exemplify the applicability of our approach on several DG schemes for the unsteady $p$-Navier--Stokes problem. The results of some numerical~experiments~are~reported~in~the~final~section.\vspace{-7mm}
	\end{abstract}
	
	\keywords{Convergence of fully-discrete approximation; non-conforming approximation; pseudo-monotone operator; evolution equation.}
	
	\date{\small \today }
	\maketitle

	\section{Introduction}
	We consider the numerical approximation of an abstract evolution
	equation
	\begin{align}
		\begin{aligned}
			\frac{d{v}}{dt}(t)+A(t)({v}(t))&= {f}(t)&&\quad\text{
				in } V^*\,,
			\\
			{v}(0)&={v}_0&&\quad\text{ in }H\,,
		\end{aligned} \label{eq:1.1}
	\end{align}
	deploying a non-conforming Rothe--Galerkin scheme. Here, $V\hookrightarrow H
	\cong H^*\hookrightarrow V^*$ is a given evolution~triple, $I\coloneqq \left(0,T\right)$ a
	finite time interval, ${v}_0\in H$ an initial value, ${{f}\in L^{p'}(I,V^*)}$,
	$p \in (1,\infty)$, a right-hand~side,~and $A(t)\colon V\rightarrow V^*$, $t\in I$, a family of
	operators.

	Recently, the existence theory for abstract evolution problems
	with Bochner pseudo-monotone operators in
	\cite{alex-rose-hirano}, based on the convergence of a
	Galerkin approximation, was extended in \cite{br-fully} to
	the~convergence~proof of a fully-discrete Rothe--Galerkin
	approximation (cf.~\cite{AL83} for an early
	contribution). However, the result in \cite{br-fully} is not
	applicable to the treatment of problems describing the flow of
	incompressible fluids. Introducing the notion of quasi
	\mbox{non-conform}ing Bochner pseudo-monotonicity in
	\cite{alex-rose-nonconform}, the methods in \cite{br-fully}
	has been extended to so-called quasi non-conforming
	approximations, which include, e.g., discretely
	divergence-free, conforming finite element approximations,
	and, thus, is applicable to the treatment of problems
	describing the flow of incompressible fluids (cf.~the recent
	results in \cite{tscherpel-phd}, \cite{sueli-tscherpel} using
	a different approach).~On~the~other~hand,~va-rious results proving convergence rates,
	under additional regularity assumptions,~are~available~for~numerical approximations of $p$-Laplace
	evolution equations~and~related~problems
	(cf.~\cite{BarLiu94,rulla,NoSaVe00,FeOePr05,DiEbRu07,P08,CHP10,BaNoSa14,sarah-phd,BDN}).
	
	However, the conformitiy of the finite element approximation in \cite{alex-rose-nonconform} excludes non-conforming~finite element approximations, i.e., the Discontinuous Galerkin (DG) method.
	The main aim of this paper is develop an abstract framework which shows that a
	fully-discrete non-conforming Rothe--Galerkin~\mbox{approximation} converges also for such
	problems (cf.~Theorem \ref{5.17}). Even though the framework is rather abstract it is easily
	applicable to many problems, since we show that it is enough to check
	a few easily verifiable conditions (cf.~conditions
	(\hyperlink{AN.1}{AN.1})--(\hyperlink{AN.4}{AN.4}) in
	Proposition~\ref{3.9},~and~conditions~(\hyperlink{NC.1}{NC.1}),
	\hyperlink{NC.2}{NC.2}) in~Definition~\ref{3.1}).
	
	A prototypical example for the flow of incompressible non-Newtonian
	fluids are the following equations describing the unsteady
	motion of incompressible shear-dependent fluids 
	\begin{align}
		\begin{split}
			\begin{alignedat}{2}
				\partial_t \bfv-\divo \big (
				(\delta
				+|\bfD \bfv|)^{p-2}\bfD\mathbf{v}\big )+[\nabla \bfv]\bfv +\nabla q&=\bfg+\divo \bfG &&\quad\text{ in }I\times\Omega\,,
				\\
				\divo \bfv&=0&&\quad\text{ in }I\times\Omega\,,
				\\
				\bfv&=\mathbf{0}&&\quad\text{ on }I\times \partial\Omega\,,
				\\
				\bfv(0)&=\bfv_0&&\quad\text{ in }\Omega\,.
			\end{alignedat}
		\end{split}\label{eq:p-NS}
	\end{align}
	Here, $\Omega \subseteq \mathbb{R}^d$,
	$d\ge 2$, is a bounded polyhedral
	Lipschitz domain, 
	$I\coloneqq (0,T)$, $T<\infty$, a finite time~interval, and $\delta\ge 0 $ and $p\in (1,\infty)$ material parameters of
	the shear-dependent fluid. Furthermore,
	$\bfv\colon \overline{I\times\Omega}\to \mathbb{R}^d$ denotes~the~velocity vector field,
	$\bfg\colon I\times\Omega\to \mathbb{R}^d$ and $\bfG\colon
	I\times\Omega\to \mathbb{R}^{d\times d}$  jointly describe a given external
	force, $\bfv_0\colon\Omega\to \mathbb{R}^d$ is an initial condition,
	$q\colon I\times\Omega\to \mathbb{R}$ is the pressure, and
	$\bfD\bfv\coloneqq \frac{1}{2}(\nabla \bfv+\nabla
	\bfv^\top)\colon I\times\Omega\to \mathbb{R}^{d\times d}$ denotes the symmetric
	gradient.

	We define for $p\hspace{-0.15em}>\hspace{-0.15em}\frac{3d+2}{d+2}$, the function spaces $\Vo(0)\hspace{-0.15em}\coloneqq  \hspace{-0.15em}W^{1,p}_{0,\divo }(\Omega)$ as the closure of
	${\mathcal{V}\hspace{-0.15em}\coloneqq \hspace{-0.15em}\{\bfv\hspace{-0.15em}\in\hspace{-0.15em}
		C_0^\infty(\Omega) \hspace{-0.1em}\mid \hspace{-0.1em}\divo\bfv\hspace{-0.15em}=\hspace{-0.15em} 0\}}$ in
	$W^{1,p}_0(\Omega)^d$, $\Ho(0)\hspace{-0.1em}\coloneqq\hspace{-0.1em} L^2_{\divo }(\Omega)$ as the closure of $\mathcal{V}$ in $L^2(\Omega)$, operators ${S,B\colon \Vo(0)\hspace{-0.1em}\to \hspace{-0.1em}\Vo(0)^*}$~for~all~${\bfv,\bfw\hspace{-0.1em}\in\hspace{-0.1em} \Vo(0)}$ via
	\begin{gather*}
		\langle S\bfu,\bfv\rangle_V\coloneqq  \int_\Omega (\delta
		+|\bfD\bfv|)^{p-2}{\bfD \bfv}:{\bD \bfw}\,\mathrm{d}x
		\quad \text{ and } \quad \langle
		B\bfv,\bfw\rangle_V\coloneqq \int_\Omega{[\nabla\bfv]\bfv\cdot \bfw\,\mathrm{d}x}\,,
	\end{gather*}
	and a functional $\bff\in L^{p'}(I,\Vo(0)^*)$ for all $\bfw\in
	L^p(I,\Vo(0))$~via
	\begin{gather*}
		\int_I\langle \bff,\bfv\rangle_{\Vo(0)}\,\mathrm{d}t\coloneqq  \int_I\int_\Omega {\mathbf{g}\cdot \bfw+\mathbf{G}:\nabla\bfw\,\mathrm{d}t\,\mathrm{d}x} \,.
	\end{gather*}
	Then, if $\bfv_0\in \Ho(0)$,
	$\bfg \in L^{p'}(I, L^{p'}(\Omega)^d)$, and $\bfG \in L^{p'}(I, L^{p'}(\Omega)^{d\times d})$, \eqref{eq:p-NS}  can be re-written~as~the~abstract 
	evolution equation
	\begin{align}
		\begin{split}
			\begin{alignedat}{2}
				\frac{d\bfv}{dt}(t)+S(\bfv(t))+B(\bfv(t))&=\ff(t)&&\quad\text{
					in }\Vo(0)^*\,,
				\\
				\bfv(0)&=\bfv_0&&\quad\text{ in }\Ho(0)\,.
				\label{eq:p-NS2}
			\end{alignedat}
		\end{split}
	\end{align}
	where, in the notation of~\eqref{eq:1.1}, we set $A(t)\coloneqq S+B\colon \Vo(0)\to \Vo(0)^*$~for~every~${t\in I}$.
	
	In Section \ref{sec:7}, we will discuss several DG schemes for
	\eqref{eq:p-NS2} and show, by using the abstract theory developed in
	Section \ref{sec:3}, its weak convergence to a weak solution of \eqref{eq:p-NS2}. 
	To the best of the authors' knowledge, there are no rigorous
	convergence results for DG Rothe--Galerkin schemes of
	nonlinear evolutions equations in the
	literature (cf.~\cite{HH21,HH22,KR05,SGS13} for results proving
	convergence rates for the incompressible Navier--Stokes
	equations under additional regularity assumptions).\enlargethispage{5mm}

	The paper is organized as follows: In Section \ref{sec:2}, we collect
	some basic facts about evolution equations, Bochner--Lebesgue spaces
	and Bochner--Sobolev spaces. In Section \ref{sec:3}, we introduce the
	concepts~of~Bochner non-conforming approximations and Bochner
	non-conforming pseudo-monotonicity. Further,~we~develop~an abstract
	theory for a non-conforming Rothe--Galerkin scheme and show well-posedness (cf.~Proposition~\ref{5.1}) and stability (cf.~Proposition
	\ref{apriori}) of the scheme as well as weak convergence of a diagonal
	subsequence to a weak solution of the original problem
	(cf.~Theorem \ref{5.17}), which is the main result of the
	paper.~In~Section~\ref{sec:7}, we apply the abstract theory to several
	DG schemes of the $p$-Navier--Stokes equations~\eqref{eq:p-NS}~and~show~well-posedness and stability of the scheme as well as weak convergence
	of a diagonal subsequence to a weak solution of \eqref{eq:p-NS}
	(cf.~Theorem \ref{thm:appl-p-NS}). We conclude the paper by some
	numerical experiments illustrating the theory in Section \ref{sec:8}.
	
	\section{Preliminaries on evolution equations}\label{sec:2}

	We \hspace{-0.1mm}use \hspace{-0.1mm}the \hspace{-0.1mm}usual \hspace{-0.1mm}notation \hspace{-0.1mm}for \hspace{-0.1mm}Bochner--Lebesgue \hspace{-0.1mm}spaces \hspace{-0.1mm}$L^p(I,X)$ \hspace{-0.1mm}and
	\hspace{-0.1mm}Bochner--Sobolev~\hspace{-0.1mm}spaces~\hspace{-0.1mm}$W^{1,p}(I,X)$, where $p \in [1,\infty]$ and $X
	$ is a Banach space (cf.~\cite{droniou}).
	We denote by $(V,H,j)$ an  \textit{evolution triple}, if $V$ is a
	reflexive~Banach~space, $H$ a Hilbert space, and $j\colon V\to H$
	a dense embedding, i.e., linear, injective, and bounded
	with $\overline{j(V)}^{\smash{\|\cdot \|_H}}=H$.  Moreover,
	let $R\colon H\to H^*$ be the Riesz
	isomorphism with respect~to~${(\cdot,\cdot)_H}$. Since $j$ is a dense
	embedding, its adjoint operator \mbox{$j^*\colon H^*\to V^*$} and,
	therefore, also $e\coloneqq j^*Rj\colon V \to V^*$ are embeddings~as~well. We call $e$ the
	\textit {canonical embedding} of $(V,H,j)$. Note that for every~${v,w\in V}$,~it~holds
	\begin{align}\label{eq:2.14}
		\langle ev,w\rangle_V=(jv,jw)_H\,.
	\end{align} 
	For an evolution triple $(V,H,j)$, $I\coloneqq \left(0,T\right)$,
	$T<\infty$, and $1\leq p\leq q\leq \infty$, we define the operators
	$j\colon L^p(I,V)\to L^p(I,H)$ and
	$j^*\colon L^{q'}(I,H^*)\to L^{q'}(I,V^*)$ 
	for every $v\in L^p(I,V)$~and~${h\in L^{q'}(I,H^*)}$~via
	\begin{alignat*}{3}
		(j v)(t)&\coloneqq j(v(t))&&\quad\text{ in }H\quad&&\text{ for a.e. }t\in I\,,\\
		(j^*h)(t)&\coloneqq j^*(h(t))&&\quad\text{ in }V^*&&\text{ for a.e. }t\in I\,.
	\end{alignat*}
	It is shown in \cite[Proposition~2.19]{alex-rose-hirano} that both $j\colon L^p(I,V)\to L^p(I,H)$ and
	$j^*\colon L^{q'}(I,H^*)\to L^{q'}(I,V^*)$  are embeddings, so-called
	\textit{induced embeddings}. 
	We define the  intersection space
	\begin{align*}
		L^p(I,V)\cap_{\smash{j}}L^q(I,H)\coloneqq \{v\in L^p(I,V)\mid jv\in L^q(I,H)\}\,, 
	\end{align*}
	which forms a Banach space (cf.~\cite[Proposition 2.7]{alex-rose-hirano}), if equipped with the canonical sum norm
	\begin{align*}
		\|\cdot\|_{	L^p(I,V)\cap_{\smash{j}}L^q(I,H)}\coloneqq \|\cdot\|_{L^p(I,V)}+\|j(\cdot)\|_{L^q(I,H)}\,.
	\end{align*}
	If $1<p\leq q<\infty$, then  the  intersection space $L^p(I,V)\cap_{\smash{j}}L^q(I,H)$ is reflexive (cf.~\cite[Proposition~2.9]{alex-rose-hirano}).
	For every  $v^*\in (L^p(I,V)\cap_{\smash{j}}L^q(I,H))^*$,
	there exist functions $g\in L^{p'}(I,V^*)$ and ${h\in L^{q'}(I,H^*)}$ 
	such that for every $w\in L^p(I,V)\cap_{\smash{j}}L^q(I,H)$, it holds
	\begin{align}
		\langle v^*,w\rangle_{L^p(I,V)\cap_{\smash{j}}L^q(I,H)}
		=\int_I{\langle g(t)+(j^*h)(t),w(t)\rangle_V\,\mathrm{d}t}\,,\label{eq:dual}
	\end{align}
	and
	$\|v^*\|_{(L^p(I,V)\cap_{\smash{j}}L^q(I,H))^*}
	\coloneqq \|g\|_{L^{p'}(I,V^*)}+\|h\|_{L^{q'}(I,H^*)}$,
	i.e., the dual space $(L^p(I,V)\cap_{\smash{j}}L^q(I,H))^*$ is
	isometrically isomorphic to 
	$\smash{L^{p'}(I,V^*)+j^*(L^{q'}(I,H^*))}$
	(cf.~\cite[Kapitel I, Bemerkung~5.13, Satz~5.13]{GGZ}), 
	which is a Banach space if equipped with the  norm
	\begin{align*}
		\|f\|_{L^{p'}(I,V^*)+j^*(L^{q'}(I,H^*))}\coloneqq \min_{\substack{g\in L^{p'}(I,V^*),\ h\in L^{q'}(I,H^*)\\f=g+j^*h}}{\|g\|_{L^{p'}(I,V^*)}+\|h\|_{L^{q'}(I,H^*)}}\,.
	\end{align*}
	
	\begin{definition}[Generalized time derivative]\label{2.15}
		Let $(V,H,j)$ be an evolution triple,
		$I\coloneqq \left(0,T\right)$, $T<\infty$, and
		$1< p\leq q<\infty$.  A function
		$v\in L^p(I,V)\cap_{\smash{j}}L^q(I,H)$ has a
		\textrm{generalized derivative with respect to the canonical
			embedding $e$ of $(V,H,j)$}, if there exists a function
		$v^*\in L^{p'}(I,V^*)+j^*(L^{q'}(I,H^*))$ such that for
		every $w\in V$ and $\varphi\in C_0^\infty(I)$, it holds
		\begin{align*}
			-\int_I{(j(v(s)),jw)_H\varphi^\prime(s)\,\mathrm{d}s}=
			\int_I{\langle v^*(s),w\rangle_{V}\varphi(s)\,\mathrm{d}s}\,.
		\end{align*}
		Since this function $v^*\in L^{p'}(I,V^*)+j^*(L^{q'}(I,H^*))$
		is unique (cf.~\cite[Proposition
		23.18]{zei-IIA}),~setting~${\frac{d_ev}{dt}\coloneqq v^*}$
		is well-defined. We define the \textrm {Bochner--Sobolev
			space with respect to $e$} via 
		\begin{align*}
			{W}^{1,p,q}_e(I,V,H)
			\coloneqq \Big\{v\in L^p(I,V)\cap_{\smash{j}}L^q(I,H)
			\,\Big|\, \exists\, \frac{d_ev}{dt}\in L^{p'}(I,V^*)+j^*(L^{q'}(I,H^*))\Big\}\,.
		\end{align*}
	\end{definition}

	\begin{proposition}[Integration-by-parts formula]\label{2.16}
		Let $(V,H,j)$ be an evolution triple, $I\coloneqq \left(0,T\right)$,~${T<\infty}$,  
		and $1<p\leq q<\infty$. Then, it holds:
		\begin{itemize}
			\item[(i)] The space ${W}^{1,p,q}_e(I,V,H)$ forms 
			a Banach space if equipped with the norm 
			\begin{align*}
				\|\cdot\|_{{W}^{1,p,q}_e(I,V,H)}
				\coloneqq \|\cdot\|_{L^p(I,V)\cap_{\smash{j}}L^q(I,H)}
				+\left\|\frac{d_e\,\cdot}{dt}\right\|_{L^{p'}(I,V^*)+{j^*}(L^{q'}(I,H^*))}\,.
			\end{align*}
			\item[(ii)] Given $v\in {W}^{1,p,q}_e(I,V,H)$, the
			function $jv\in L^q(I,H)$, defined via  
			$(jv)(t)\coloneqq j(v(t))$ in $H$
			for almost every $t\in I$, possesses a unique representation
			$j_cv\in C^0(\overline{I},H)$. Moreover, the resulting mapping
			${j_c\colon{W}^{1,p,q}_e(I,V,H)\to
				C^0(\overline{I},H)}$ is an embedding.
			\item[(iii)] For every $v,w\in {W}^{1,p,q}_e(I,V,H)$ and
			$t,t'\in \overline{I}$ with $t'\leq t$, it holds
			\begin{align*}
				\int_{t'}^t{\bigg\langle
					\frac{d_ev}{dt}(s),w(s)\bigg\rangle_V\,\mathrm{d}s}
				=\left[((j_cv)(s), (j_c
				w)(s))_H\right]^{s=t}_{s=t'}-\int_{t'}^t{\bigg\langle
					\frac{d_ew}{dt}(s),v(s)\bigg\rangle_V\,\mathrm{d}s}\,.
			\end{align*}
		\end{itemize}  
	\end{proposition}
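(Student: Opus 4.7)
The approach I would follow is the classical mollification-in-time argument (as in \cite{zei-IIA, GGZ}), adapted to the sum-space structure of the generalized derivative. The three parts split naturally: (i) is a soft completeness argument, while (ii) and (iii) are proved together by regularizing in time and passing to the limit in a smooth integration-by-parts identity.

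For (i), given a Cauchy sequence $(v_n) \subset W^{1,p,q}_e(I,V,H)$, both pieces of the norm live in Banach spaces: $L^p(I,V)\cap_{\smash{j}} L^q(I,H)$ is complete by \cite[Proposition~2.7]{alex-rose-hirano}, and the quotient-type sum $L^{p'}(I,V^*) + j^*(L^{q'}(I,H^*))$ inherits completeness from its factors together with uniqueness of the generalized derivative. Let $v$ and $v^*$ denote the respective limits. Passing to the limit in the defining integral identity of Definition~\ref{2.15} against arbitrary test pairs $(w,\varphi) \in V \times C_0^\infty(I)$ identifies $v^*$ with $d_e v/dt$, so $v_n \to v$ in $W^{1,p,q}_e(I,V,H)$.

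For (ii) and (iii), the plan is to first extend a given $v \in W^{1,p,q}_e(I,V,H)$ by reflection across $\{0\}$ and $\{T\}$ to an open interval $I_\eta \supsetneq \overline{I}$, producing $\tilde v$ whose generalized $e$-derivative on $I_\eta$ is the corresponding reflection of $d_e v/dt$. Convolving $\tilde v$ in time with a standard mollifier $\rho_\varepsilon$ yields smooth functions $v_\varepsilon \in C^\infty(\overline{I},V)$ whose classical derivative $dv_\varepsilon/dt$, considered in $L^{p'}(I,V^*)+j^*(L^{q'}(I,H^*))$, coincides with the mollification of $d_e \tilde v/dt$ obtained by regularizing a representative splitting $g + j^* h$ componentwise. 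Standard approximation theory then gives $v_\varepsilon \to v$ in $L^p(I,V)\cap_{\smash{j}} L^q(I,H)$ and $dv_\varepsilon/dt \to d_e v/dt$ in $L^{p'}(I,V^*)+j^*(L^{q'}(I,H^*))$. For smooth $v_\varepsilon, w_\varepsilon$, the identity in (iii) reduces to the classical product rule for $s\mapsto (jv_\varepsilon(s), jw_\varepsilon(s))_H$, where \eqref{eq:2.14} converts $V$-duality pairings of $e$-derivatives into $H$-inner products on the boundary terms. Applying this smooth IBP identity to the pair $(v_\varepsilon - v_{\varepsilon'}, v_\varepsilon - v_{\varepsilon'})$ shows that $(jv_\varepsilon)$ is Cauchy in $C^0(\overline{I},H)$; its uniform limit is the desired continuous representative $j_c v$, and the same identity anchored at a Lebesgue point $t_0 \in I$ of $jv$ gives the bound $\|j_c v\|_{C^0(\overline{I},H)} \leq C\,\|v\|_{W^{1,p,q}_e(I,V,H)}$, so $j_c$ is an embedding. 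Part (iii) then follows by passing $\varepsilon \to 0$ in the smooth identity for $(v_\varepsilon, w_\varepsilon)$ on $[t',t]$.

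The main obstacle will be the bookkeeping forced by the sum-space structure: since $d_e v/dt$ is only specified modulo decompositions $g + j^* h$, one must regularize a choice of representatives separately in $L^{p'}(I,V^*)$ and $L^{q'}(I,H^*)$ and then verify that the resulting classical derivative of $v_\varepsilon$ is independent of the chosen splitting. A related subtlety concerns the boundary endpoints $t,t' \in \{0,T\}$ in (iii): the reflection extension must be engineered so that the support of the mollified derivative does not lose mass at the endpoints, which is why the extension step cannot be replaced by a zero extension.
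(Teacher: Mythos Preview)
Your outline is the standard mollification-in-time argument and is correct. The paper itself does not give a proof at all: it simply cites \cite[Kapitel IV, Satz 1.16, Satz 1.17]{GGZ}, and what you have sketched is essentially the content of that reference (and of the analogous treatment in \cite{zei-IIA}), so there is no meaningful difference to compare.
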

	
	\begin{proof}
		See \cite[Kapitel IV, Satz 1.16,  Satz 1.17]{GGZ}.
	\end{proof}

	For an evolution triple $(V,H,j)$, $I\coloneqq \left(0,T\right)$, $T<\infty$, and
	$1<p\leq q< \infty$, we~call~an~operator 
	$\mathcal{A}\colon L^p(I,V)\cap_{\smash{j}}L^q(I,H)\to (L^p(I,V)\cap_{\smash{j}}L^q(I,H))^*$ 
	\textit{induced} by a time-dependent family of operators $A(t)\colon V\to V^*$, $t\in I$, 
	if for every $v,w\in L^p(I,V)\cap_{\smash{j}}L^q(I,H)$, it holds
	\begin{align}
		\langle\mathcal{A}v,w\rangle_{L^p(I,V)\cap_{\smash{j}}L^q(I,H)}
		=\int_I{\langle A(t)(v(t)),w(t)\rangle_V\,\mathrm{d}t}\,.\label{eq:induced}
	\end{align}
	
	\begin{definition}[Weak solution]
		\label{2.17}
		Let $(V,H,j)$ be an evolution triple,
		$I\coloneqq \left(0,T\right)$, $T<\infty$, and
		$1<p\leq q< \infty$. Furthermore, let $v_0\in H$, $f\in L^{p'}(I,V^*)$, and
		$\mathcal{A}\colon L^p(I,V)\cap_{\smash{j}}L^q(I,H)\to
		(L^p(I,V)\cap_{\smash{j}}L^q(I,H))^*$ be induced by a family of
		operators $A(t)\colon V\to V^*$, $t\in I$. Then, a function
		$v \in {W}^{1,p,q}_e(I,V,H)$ is called \textrm{weak
			solution} of the initial value problem \eqref{eq:1.1} if
		$(j_cv)(0)=v_0$ in $H $ and for every $\phi\in C^1_0(I,V)$,
		it holds
		\begin{align}\label{eq:2.18}
			\int_I{\bigg\langle\frac{d_ev}{dt}(t),\phi(t)\bigg\rangle_V\,\mathrm{d}t}
			+\int_I{\langle A(t)(v(t)),\phi(t)\rangle_V\,\mathrm{d}t}
			&=\int_I{\langle f(t),\phi(t)\rangle_V\,\mathrm{d}t}\,.
		\end{align}
	\end{definition}

	\section{Fully discrete non-conforming approximation of evolution equations}
	\label{sec:3}
	
	\subsection{Non-conforming pseudo-monotonicity}
	In this section, we adapt the  concepts of non-conforming
	approximations and non-conforming pseudo-monotonicity,
	introduced in \cite{kr-pnse-ldg-1}, to the evolutionary
	setting. Moreover, we lift the notion of non-conforming 
	pseudo-monotonicity to the Bochner--Lebesgue space level, 
	which  leads us to an extended notion of Bochner 
	pseudo-monotonicity~introduced in \cite{alex-rose-hirano},
	\cite{K19}, and \cite{alex-rose-nonconform}. Eventually, we
	formulate the non-conforming Rothe--Galerkin scheme and establish~its~well-posedness (i.e., existence of solutions), stability (i.e., a priori estimates), and  convergence, the main~result~of~the~paper.
	
	We will start by adapting the notion of non-conforming
	pseudo-monotonicity from \cite[Definition 4.2]{kr-pnse-ldg-1} to the
	evolutionary setting. 
	\begin{definition}\label{3.1}
		Let $(V,H,j) $ and $(X_n,Y,j)$, $n\in\mathbb{N}$, be evolution triples 
		such that $H\subseteq Y$ with $(\cdot,\cdot)_H=(\cdot,\cdot)_Y$ 
		in $H\times H$ and for every $n\in \mathbb{N}$, it holds
		$V\subseteq X_n$ with $\|\cdot\|_V=\|\cdot\|_{X_n}$ in $V$  and 
		$\|\cdot\|_Y\leq c_Y\|\cdot\|_{X_n}$~in~$X_n$, where $c_Y>0$ is 
		independent~of~$n\in \mathbb{N}$. Moreover, let $I\coloneqq \left(0,T\right)$, $T<\infty$, 
		and $p\in \left(1,\infty\right)$. A sequence of closed subspaces 
		$V_n\subseteq X_n$, $n\in \mathbb{N}$, is called a
		\textrm{Bochner non-conforming approximation 
			of $V$ with respect to $(X_n)_{n\in\mathbb{N}}$}, if the following properties are satisfied:
		\begin{itemize}
			\item[(NC.1)]\hypertarget{NC.1}{}  There exists a dense subset $D\subseteq V$ 
			such that for every $v\hspace{-0.1em}\in\hspace{-0.1em} D$, there exists a sequence $v_n\hspace{-0.1em}\in\hspace{-0.1em} V_n$,~${n\hspace{-0.1em}\in\hspace{-0.1em}\mathbb{N}}$, with $\|v_n-v\|_{X_n}\to 0 $~$(n\to\infty)$.
			\item[(NC.2)] \hypertarget{NC.2}{} For each sequence 
			$v_n\!\in\! L^p(I,V_{m_n})\cap_{\smash{j}} L^\infty(I,Y)$, $n\in \mathbb{N}$, 
			where ${(m_n)_{n\in\mathbb{N}}\subseteq\mathbb{N}}$~with~${m_n\to\infty}$~${(n\to\infty)}$, from 
			$\sup_{n\in \mathbb{N}}{\|v_n\|_{L^p(I,X_{m_n})\cap_{\smash{j}}L^\infty(I,Y)}}<\infty$, 
			it follows the existence of a cofinal subset $\Lambda\subseteq \mathbb{N}$ 
			and $v\in L^p(I,V)\cap_{\smash{j}} L^\infty(I,H)$ such that 
			$jv_n\ \smash{\overset{\ast}{\rightharpoondown}}\ jv$ in 
			$L^\infty(I,Y)$~${(\Lambda\ni n\to \infty)}$.
		\end{itemize}
	\end{definition}
	
	The following proposition demonstrates that the notion of a Bochner 
	non-conforming approximation is indeed a generalization of the
	notion of a  quasi non-conforming approximation introduced in~\cite{alex-rose-nonconform}.~In~Section~\ref{sec:7}, we will show that our motivating example, i.e,
	the approximation of divergence-free Sobolev functions via
	discretely divergence-free DG finite element spaces, 
	perfectly fits into the framework~of~Bochner~non-conforming approximations.
	
	\begin{proposition}\label{prop:quasi}
		Let $(V,H,j) $ and $(X,Y,j)$ be evolution triples and let
		$V_n\subseteq X$, $n\in \mathbb{N}$, be a quasi
		non-conforming approximation of $V$ in $X$ (cf.~\cite{alex-rose-nonconform}). Then, setting
		$X_n\coloneqq  X$, $n\in\mathbb{N}$,
		$(V_n)_{n\in\mathbb{N}}$ is a Bochner non-conforming approximation
		of $V$ with respect to $(X_n)_{n\in\mathbb{N}}$.
	\end{proposition}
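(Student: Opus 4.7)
The plan is to verify conditions (\hyperlink{NC.1}{NC.1}) and (\hyperlink{NC.2}{NC.2}) of Definition~\ref{3.1} by lifting the static quasi non-conforming properties of \cite{alex-rose-nonconform} to the Bochner setting, exploiting the fact that the choice $X_n \equiv X$ collapses the time-dependent norms to norms on a single reflexive space.

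First, (\hyperlink{NC.1}{NC.1}) is immediate. Since $\|\cdot\|_{X_n} = \|\cdot\|_X$ for all $n$, the dense subset $D \subseteq V$ together with the approximating sequences $v_n \in V_n$ satisfying $\|v_n - v\|_X \to 0$ that are provided by the static quasi non-conforming property serve verbatim. For (\hyperlink{NC.2}{NC.2}), I fix a sequence $v_n \in L^p(I, V_{m_n}) \cap_{j} L^\infty(I, Y)$ with uniform bound. Reflexivity of $X$ yields reflexivity of $L^p(I, X)$, and $L^\infty(I, Y)$ is isometrically isomorphic to the dual of $L^1(I, Y)$. Hence, along a cofinal subsequence $\Lambda \subseteq \mathbb{N}$, I extract limits $v_n \rightharpoonup \bar v$ in $L^p(I, X)$ and $jv_n \overset{\ast}{\rightharpoondown} u$ in $L^\infty(I, Y)$. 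Continuity of $j \colon X \to Y$ together with the density of $L^1(I, Y) \cap L^{p'}(I, Y)$ in $L^1(I, Y)$ identifies the two limits via $u = j\bar v$.

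The central step is to promote $\bar v \in L^p(I, X)$ to $\bar v \in L^p(I, V)$. I apply Mazur's lemma in $L^p(I, X)$ to produce convex combinations $\tilde v_k = \sum_{i \geq k} \alpha_i^k v_i$ converging strongly to $\bar v$ in $L^p(I, X)$; passing to a further subsequence, $\tilde v_k(t) \to \bar v(t)$ in $X$ for a.e.\ $t \in I$. For such $t$, the sequence $\tilde v_k(t)$ lies in the convex hull of $\bigcup_m V_m$ and converges strongly in $X$. Invoking the closedness aspect of the quasi non-conforming approximation of \cite{alex-rose-nonconform}---namely, that the $X$-weak cluster points of sequences drawn from the family $(V_m)_{m \in \mathbb{N}}$ are contained in $V$, a property robust under passage to convex combinations---yields $\bar v(t) \in V$ a.e., hence $\bar v \in L^p(I, V)$. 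Consequently $j\bar v(t) \in j(V) \subseteq H$ a.e.; since $\|\cdot\|_H = \|\cdot\|_Y$ on $H$, the uniform $L^\infty(I, Y)$ bound on $u = j\bar v$ upgrades directly to $j\bar v \in L^\infty(I, H)$, producing the limit object required by (\hyperlink{NC.2}{NC.2}).

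The main obstacle is precisely this promotion from $L^p(I, X)$ to $L^p(I, V)$: the static quasi non-conforming closedness of \cite{alex-rose-nonconform} is phrased for single-space sequences $u_n \in V_n$, whereas after Mazur one must apply it to convex combinations drawn from different $V_m$ and to strong, rather than weak, convergence in $X$. Carefully unpacking the formulation in \cite{alex-rose-nonconform} to confirm this robustness---equivalently, to confirm that the strong $X$-closure of $\mathrm{conv}(\bigcup_m V_m)$ is contained in $V$---is the essential technical verification underpinning the proof.
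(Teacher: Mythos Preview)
Your approach differs from the paper's and contains a gap. The paper's proof is essentially a one-line invocation: since $X_n \equiv X$, boundedness in $L^p(I,X) \cap_j L^\infty(I,Y)$ together with reflexivity and duality yields subsequential limits $v_n \rightharpoonup v$ in $L^p(I,X)$ and $jv_n \overset{*}{\rightharpoondown} jv$ in $L^\infty(I,Y)$; then \cite[(QNC.2)]{alex-rose-nonconform} is cited directly to conclude $v \in L^p(I,V)$. The point is that (QNC.2) in that reference is \emph{already} formulated at the Bochner level for sequences in $L^p(I, V_{m_n})$---no pointwise lifting is required, so the proposition is a tautological reformatting of the definition once one sets $X_n = X$.

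Your Mazur-based reduction to a pointwise static property has a real obstacle, which you yourself flag but do not resolve. After Mazur, $\tilde v_k(t)$ is a finite convex combination of elements $v_n(t)$ drawn from \emph{different} spaces $V_{m_n}$; a static closedness property of the form ``$u_n \in V_{m_n}$, $m_n \to \infty$, $u_n \rightharpoonup u$ in $X$ $\Rightarrow$ $u \in V$'' does not apply to such mixtures, and robustness under convex combinations is not a formal consequence of it. Worse, falling back on the original $v_n(t)$ does not help either: you only have $\sup_n \|v_n\|_{L^p(I,X)} < \infty$, which gives no pointwise-in-$t$ bound on $\|v_n(t)\|_X$, so at a fixed $t$ the boundedness hypothesis of any static weak-closure statement is unavailable. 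The paper sidesteps all of this precisely because (QNC.2) in \cite{alex-rose-nonconform} is a Bochner-space hypothesis, not a static one.
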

	\begin{proof}
		\textit{ad (NC.1).} Due to \cite[(QNC.1)]{alex-rose-nonconform}, there exists a dense subset $D\subseteq V$ such that for every $v\in D$, there exists a sequence $v_n\in V_n$, $n\in \mathbb{N}$, with $v_n\to v$ in $X$ $(n\to \infty)$. Since $X_n\coloneqq X$ for all $n\in \mathbb{N}$,~we~obtain $\|v_n-v\|_{X_n}\to 0$ $(n\to \infty)$.
		
		\textit{ad (NC.2).}  Let
		$v_n\in L^p(I,V_{m_n})\cap_{\smash{j}} L^\infty(I,Y)$, $n\in \mathbb{N}$, 
		where ${(m_n)_{n\in\mathbb{N}}\subseteq\mathbb{N}}$~with~${m_n\to\infty}$~${(n\to\infty)}$, be a sequence such that
		$\sup_{n\in \mathbb{N}}{\|v_n\|_{L^p(I,X_{m_n})\cap_{\smash{j}}L^\infty(I,Y)}}<\infty$.  Since $X_n\coloneqq X$ for all $n\in \mathbb{N}$,~there exists a cofinal subset $\Lambda\subseteq \mathbb{N}$ and $v\in L^p(I,X)\cap_{\smash{j}} L^\infty(I,Y)$ such that $v_n\rightharpoonup v$ in $L^p(I,X)$ $(\Lambda \ni n\to \infty)$ and $jv_n\ \smash{\overset{\ast}{\rightharpoondown}}\ jv$ in 
		$L^\infty(I,Y)$~${(\Lambda\ni n\to \infty)}$. Due to \cite[(QNC.2)]{alex-rose-nonconform}, we obtain $v\in L^p(I,V)$, which,~in~turn,~yields~that $v\in L^p(I,V)\cap_{\smash{j}} L^\infty(I,H) $.
	\end{proof}

	The following proposition will be of crucial importance in the verification that the
	induced operators $\mathcal A_n\colon L^p(I,X_n)\cap_{\smash{j}}L^q(I,Y)\to (L^p(I,X_n)\cap_{\smash{j}}L^q(I,Y))^*$, $n\in \mathbb{N}$, cf.~\eqref{eq:induced}, 
	of a sequence of families of operators $A_n(t)\colon X_n\to X_n^*$, $t\in I$, $n\in \mathbb{N}$, are non-conforming Bochner 
	pseudo-monotone (cf.~Definition~\ref{3.4}). 
	
	\begin{proposition}\label{3.2}
		Let $(V_n)_{n\in\mathbb{N}}  $ be a Bochner non-conforming
		approximation of $V$ with respect to $(X_n)_{n\in\mathbb{N}}$. Then, the following
		statements apply:
		\begin{itemize}
			\item[(i)] For every sequence $v_n\in V_{m_n}$,
			$n\in \mathbb{N}$, where
			$(m_n)_{n\in\mathbb{N}}\subseteq\mathbb{N}$ with
			$m_n\to \infty$ $(n\to \infty)$, from
			$\sup_{n\in \mathbb{N}}{\|v_n\|_{X_{m_n}}}<\infty$, it
			follows the existence of a cofinal subset
			$\Lambda\subseteq \mathbb{N}$ and $v\in V$ such
			that $jv_n\weakto jv$ in $Y$ $(\Lambda\ni n\to \infty)$.
			\item[(ii)] For every sequence $v_n\in V_{m_n}$, $n\in \mathbb{N}$, 
			with $\sup_{n\in \mathbb{N}}{\|v_n\|_{X_{m_n}}}<\infty$, where 
			$(m_n)_{n\in\mathbb{N}}\subseteq\mathbb{N}$ with $m_n\to \infty$ 
			$(n\to \infty)$, and $v\in V$, the following statements are equivalent:
			\begin{itemize}
				\item[(a)]  $jv_n\weakto jv$ in $Y$ $(n\to\infty)$.
				\item[(b)]  $P_Hjv_n\weakto jv$ in $H$ $(n\to\infty)$, 
				where $P_H\colon Y\to H$ is the orthogonal projection~of~$Y$~into~$H$.
			\end{itemize}
		\end{itemize}
	\end{proposition}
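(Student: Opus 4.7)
\emph{Proof plan.} The idea for~(i) is to reduce the static claim to the Bochner statement (NC.2) by extending each $v_n$ to the constant-in-time function $\tilde v_n(t)\coloneqq v_n$ for $t\in I$, which belongs to $L^p(I,V_{m_n})\cap_{\smash{j}}L^\infty(I,Y)$. The uniform bound on $\|v_n\|_{X_{m_n}}$ combined with $\|\cdot\|_Y\leq c_Y\|\cdot\|_{X_n}$ gives $\sup_n\|\tilde v_n\|_{L^p(I,X_{m_n})\cap_{\smash{j}}L^\infty(I,Y)}<\infty$, so (NC.2) yields a cofinal $\Lambda\subseteq\mathbb{N}$ and $\tilde v\in L^p(I,V)\cap_{\smash{j}}L^\infty(I,H)$ with $j\tilde v_n\,\overset{\ast}{\rightharpoondown}\,j\tilde v$ in $L^\infty(I,Y)$ along $\Lambda$.

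Next I identify the limit. Testing the weak-$\ast$ convergence against tensorized functions $t\mapsto \phi(t)y$ with $\phi\in L^1(I)$ and $y\in Y$, and using that $\tilde v_n$ is constant in time, yields $(jv_n,y)_Y\int_I\phi\,dt\to\int_I\phi(t)(j\tilde v(t),y)_Y\,dt$. Taking $\phi\equiv 1$ defines a bounded linear functional on $Y$, so the Riesz representation theorem produces $w\in Y$ with $jv_n\rightharpoonup w$ in $Y$ along $\Lambda$, and varying $\phi$ then gives $(j\tilde v(t)-w,y)_Y=0$ for a.e.\ $t\in I$ and every $y\in Y$. Since $j\tilde v$ is Bochner-measurable and hence a.e.\ valued in a separable subspace $Y_0\subseteq Y$, intersecting null sets for $y$ in a countable dense subset of $Y_0$ forces $j\tilde v(t)=P_{Y_0}w$ a.e.\ and $w=P_{Y_0}w\in Y_0$. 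Because $\tilde v\in L^p(I,V)$, the a.e.-constant value must lie in $j(V)$, so $w=jv$ for some $v\in V$, establishing~(i).

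For~(ii), the implication (a) $\Rightarrow$ (b) is immediate from the characterization of the orthogonal projection $P_H$: one has $(P_Hjv_n,h)_H=(jv_n,h)_Y$ for every $h\in H$, and since $jv\in H$ with coinciding inner products on $H\times H$, (a) gives $(P_Hjv_n,h)_H\to(jv,h)_H$. For the converse, I would argue by the subsequence principle: by part~(i), every subsequence of $(jv_n)$ admits a further cofinal subsequence with $jv_n\rightharpoonup j\tilde v$ in $Y$ for some $\tilde v\in V$; applying the already-proved (a) $\Rightarrow$ (b) to this subsequence and comparing with the hypothesis $P_Hjv_n\rightharpoonup jv$ in $H$, uniqueness of weak limits forces $j\tilde v=jv$, hence $\tilde v=v$ by injectivity of~$j$. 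A standard subsequence argument then promotes this to weak convergence of the full sequence $jv_n\rightharpoonup jv$ in $Y$.

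The principal obstacle is the identification step in~(i), where the $L^\infty(I,Y)$ weak-$\ast$ duality must be handled with care and potential non-separability of $Y$ must be circumvented by Bochner/Pettis measurability; once $j\tilde v$ is shown to be a.e.\ equal to an element of $j(V)$, the rest is routine bookkeeping.
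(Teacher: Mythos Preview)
Your proposal is correct and follows the same strategy as the paper: embed $v_n$ as constant-in-time functions and invoke (NC.2) for (i), and for (ii) use weak continuity of $P_H$ together with the subsequence principle based on (i). The one simplification you miss in (i) is that $Y$ is a Hilbert space, so the bounded sequence $(jv_n)_{n\in\Lambda}\subseteq Y$ already admits a weakly convergent subsequence $jv_n\rightharpoonup h$ in $Y$ directly by reflexivity, after which one checks $j\tilde v(t)=h$ a.e.\ and hence $h\in j(V)$; this sidesteps your Riesz-representation construction of the limit and the separability/Pettis considerations, which are not wrong but are unnecessary here.
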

	
	\begin{proof}
		\textit{ad (i).} We define $\tilde v_n(\cdot)\coloneqq v_n\in L^p(I,V_{m_n})\cap_{\smash{j}} L^\infty(I,Y)$ 
		for every $n\in \mathbb{N}$. Then, we have that
		$\sup_{n\in \mathbb{N}}{\|\tilde v_n\|_{L^p(I,X_{m_n})\cap_{\smash{j}}L^\infty(I,Y)}}<\infty$. 
		Therefore, (\hyperlink{NC2}{NC.2}) provides the existence of a cofinal subset 
		$\Lambda\subseteq \mathbb{N}$ and $\tilde v\in L^p(I,V)\cap_{\smash{j}} L^\infty(I,H)$
		such that $j\tilde v_n\,\smash{\overset{\ast}{\rightharpoondown}}\,j\tilde v$
		in $L^\infty(I,Y)$ $(\Lambda\ni n\to \infty)$.\label{key}
		Apart from that, since also $(jv_n)_{n\in \mathbb{N}}\subseteq Y$ is bounded, due to 
		$\|jv_n\|_Y=\|j\tilde v_n\|_{L^\infty(I,Y)}$ for all $n\in \mathbb{N}$, we may assume
		that there exists  $h\in Y$ such that $jv_n\weakto h$ in $Y$ 
		$(\Lambda\ni n\to\infty)$. It is~easy~to~see, that $(j\tilde v)(t)= h$~in~$Y$~for~almost~every~${t\in I}$. As $\tilde v(t)\in V$ for almost every $t\in I$, we find that $h\in R(j)$, i.e.,
		there exits $v\in V$~such~that~${jv=h}$~in~$Y$,~i.e., ${h\in  H}$ and ${jv_n\weakto jv}$ in $Y$ $(\Lambda\ni n\to\infty)$.
		
		\textit{ad (ii).} \textit{(a) $\boldsymbol{\Rightarrow}$ (b).} 
		Follows from the weak continuity of $P_H\colon Y\to H$.
		
		\textit{(b) $\boldsymbol{\Rightarrow}$  (a).} From {(i)} 
		we obtain a subsequence $(v_n)_{n\in \Lambda}$ with cofinal $\Lambda\subseteq \mathbb{N}$ 
		and an~element~${\tilde{v}\in V}$ such that $jv_n\weakto j\tilde{v}$ in $Y$ 
		$(\Lambda\ni n\to \infty)$. From the weak continuity of  $P_H\colon Y\to H$, 
		we conclude that $P_Hjv_n\weakto P_Hj\tilde{v}=j\tilde{v}$ in $H$  $(\Lambda\ni n\to \infty)$. 
		In consequence, we have that $j\tilde{v}=jv$ in $H$, which, by virtue of the injectivity of
		$j\colon V\to H$, implies that $\tilde{v}=v$ in $V$ and, thus, $jv_n\weakto jv$ in $Y$ 
		$(\Lambda \ni n\to  \infty)$.
		Since this argumentation stays valid for each subsequence of 
		$(jv_n)_{n\in \mathbb{N}}\subseteq Y$, $jv\in H$ is  a weak accumulation point of 
		each subsequence of $(jv_n)_{n\in \mathbb{N}}\subseteq Y$. The standard convergence principle (cf.~\cite[Kap. I, Lemma 5.4]{GGZ}) 
		yields $jv_n\weakto jv$ in $Y$ $(n\to  \infty)$. 
	\end{proof}

	\begin{definition}\label{3.4.0}
		Let $(V_n)_{n\in\mathbb{N}}  $ be a Bochner non-conforming
		approximation of $V$ with respect to
		$(X_n)_{n\in\mathbb{N}}$.
		A sequence of operators
		$A_n\colon X_n\to X_n^*$, $n\in \mathbb{N}$, is said to be
		\textrm{non-conforming pseudo-monotone with respect to
			$(V_n)_{n\in\mathbb{N}}$ and
		}$A\colon V\to V^*$,~if~for~every~sequence $v_n\in V_{m_n}$,
		$n\in\mathbb{N}$, where
		$(m_n)_{n\in\mathbb{N}}\subseteq\mathbb{N}$ with
		$m_n\to \infty$ $(n\to\infty)$, from
		\begin{gather}
			\sup_{n\in\mathbb{N}}\;\|v_n\|_{X_{m_n}}<\infty\,,
			\qquad jv_n\;\;\weakto\;\;jv\quad\text{ in }Y\quad(n\to \infty)\,,\label{eq:3.4.a}
			\\
			\limsup_{n\to\infty}{\langle A_{m_n}v_n,v_n-v\rangle_{X_{m_n}}}\leq 0\,,\label{eq:3.4.b}
		\end{gather}
		for every $w\in V$, it follows that 
		\begin{align*}
			\langle Av,v-w\rangle_V\leq	
			\liminf_{n\to\infty}{\langle A_{m_n}v_n,v_n-w\rangle_{X_{m_n}}}\,.
		\end{align*}
		In particular, due to Proposition \ref{3.2} (i), \eqref{eq:3.4.a}  implies that 
		$v\in V$ .
	\end{definition}
	
	\begin{lemma}\label{3.4.a} 
		Let $(V_n)_{n\in\mathbb{N}} $ be a Bochner non-conforming
		approximation of $V$ with respect to
		$(X_n)_{n\in\mathbb{N}}$.  Moreover, let
		$A_n,B_{n}\colon X_n\to X_n^*$, $n\in \mathbb{N}$, be
		non-conforming pseudo-monotone with respect to
		$(V_n)_{n\in\mathbb{N}}$~and $A,B\colon V\to V^*$,
		respectively. Then, ${A_n+B_{n}\colon X_n\to
			X_n^*}$,~${n\in \mathbb{N}}$, is non-conforming
		pseudo-monotone with respect to $(V_n)_{n\in\mathbb{N}}$ and
		$A+B\colon V\to V^*$.
	\end{lemma}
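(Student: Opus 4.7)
The plan is the one standard for sums of pseudo-monotone-type operators, adapted to the present non-conforming setting: I would show that the $\limsup$ condition \eqref{eq:3.4.b} imposed on the sum $A_n+B_n$ already forces the analogous $\limsup$ condition separately for each summand, after which the desired inequality follows by applying the non-conforming pseudo-monotonicity of $A_n$ and $B_n$ individually and adding the two resulting estimates.

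Concretely, fix a sequence $v_n\in V_{m_n}$, $n\in\mathbb{N}$, with $m_n\to\infty$ $(n\to\infty)$, satisfying \eqref{eq:3.4.a} and \eqref{eq:3.4.b} with $A_{m_n}+B_{m_n}$ in place of $A_{m_n}$; by Proposition~\ref{3.2}~(i) the limit element $v$ automatically lies in $V$. Abbreviate
\begin{align*}
\alpha_n\coloneqq \langle A_{m_n}v_n,v_n-v\rangle_{X_{m_n}},\qquad \beta_n\coloneqq \langle B_{m_n}v_n,v_n-v\rangle_{X_{m_n}},
\end{align*}
so that the hypothesis reads $\limsup_{n\to\infty}(\alpha_n+\beta_n)\leq 0$.

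The key (and only non-routine) step is the claim that $\limsup_{n\to\infty}\alpha_n\leq 0$ and, symmetrically, $\limsup_{n\to\infty}\beta_n\leq 0$. I would argue by contradiction: suppose $\limsup_{n\to\infty}\alpha_n=a\in(0,\infty]$ and extract a subsequence $(v_{n_k})_{k\in\mathbb{N}}$ along which $\alpha_{n_k}\to a$. The identity $\beta_{n_k}=(\alpha_{n_k}+\beta_{n_k})-\alpha_{n_k}$ combined with $\limsup_n(\alpha_n+\beta_n)\leq 0$ then forces $\limsup_{k\to\infty}\beta_{n_k}\leq -a<0$. Since $(v_{n_k})_{k\in\mathbb{N}}$ still satisfies \eqref{eq:3.4.a} (it is bounded in $X_{m_{n_k}}$ with $jv_{n_k}\weakto jv$ in $Y$, and $m_{n_k}\to\infty$), I may apply the non-conforming pseudo-monotonicity of $(B_n)_{n\in\mathbb{N}}$ along this subsequence with test element $w=v\in V$, obtaining
\begin{align*}
0=\langle Bv,v-v\rangle_V\leq \liminf_{k\to\infty}\beta_{n_k}\leq -a<0,
\end{align*}
a contradiction; the case $a=\infty$ is handled identically. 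Hence $\limsup_{n\to\infty}\alpha_n\leq 0$ and, by symmetry, $\limsup_{n\to\infty}\beta_n\leq 0$.

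With these separate bounds in hand, the conclusion is immediate. For arbitrary $w\in V$, the non-conforming pseudo-monotonicity of $(A_n)_{n\in\mathbb{N}}$ and $(B_n)_{n\in\mathbb{N}}$ yields
\begin{align*}
\langle Av,v-w\rangle_V&\leq \liminf_{n\to\infty}\langle A_{m_n}v_n,v_n-w\rangle_{X_{m_n}},\\
\langle Bv,v-w\rangle_V&\leq \liminf_{n\to\infty}\langle B_{m_n}v_n,v_n-w\rangle_{X_{m_n}},
\end{align*}
and adding the two inequalities, together with the elementary estimate $\liminf a_n+\liminf b_n\leq \liminf(a_n+b_n)$, gives $\langle (A+B)v,v-w\rangle_V\leq \liminf_{n\to\infty}\langle (A_{m_n}+B_{m_n})v_n,v_n-w\rangle_{X_{m_n}}$, which is exactly the conclusion required by Definition~\ref{3.4.0}.
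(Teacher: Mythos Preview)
Your argument is correct and is precisely the classical ``splitting the $\limsup$'' proof for sums of pseudo-monotone-type operators, adapted verbatim to the non-conforming framework of Definition~\ref{3.4.0}. The paper itself does not spell this out but simply notes that, once Proposition~\ref{3.2}~(i) guarantees $v\in V$, the assertion reduces to \cite[Lemma~4.3]{kr-pnse-ldg-1}, whose proof is exactly the argument you wrote; so your approach and the paper's coincide.
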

	
	\begin{proof}
		In view of Proposition \ref{3.2} (i), the assertion is proved in
		\cite[Lemma 4.3]{kr-pnse-ldg-1}.
	\end{proof}
	
	Now, we can lift the adapted notion of non-conforming
	pseudo-monotonicity to the notion of non-conforming Bochner pseudo-monotonicity.
	
	\begin{definition}\label{3.4}
		Let $(V_n)_{n\in\mathbb{N}} $ be a Bochner non-conforming
		approximation of $V$ with respect to
		$(X_n)_{n\in\mathbb{N}}$, and let $q \in [p,\infty)$.
		Then, a sequence of operators $\mathcal{A}_n\colon L^p(I,X_n)\cap_{\smash{j}} L^q(I,Y)
		\to (L^p(I,X_n)\cap_{\smash{j}} L^q(I,Y))^*$, $n\in \mathbb{N}$, is called
		\textrm{non-conforming~Bochner pseudo-monotone with respect to 
			$(V_n)_{n\in\mathbb{N}}$ and} $\mathcal{A}\colon L^p(I,V)\cap_{\smash{j}} L^q(I,H)
		\to (L^p(I,V)\cap_{\smash{j}}L^q(I,H))^*$ if for a sequence 
		$v_n\in L^p(I,V_{m_n})\cap L^\infty(I,Y)$, $n\in\mathbb{N}$, 
		where $(m_n)_{n\in\mathbb{N}}\subseteq\mathbb{N}$ with $m_n\to \infty$~$(n\to\infty)$,~from
		\begin{gather}
		\sup_{n\in\mathbb{N}}\;\|v_n\|_{L^p(I,X_{m_n})}<\infty\,,\qquad
		jv_n\;\;\overset{\ast}{\rightharpoondown}\;\;
		jv\quad\text{ in } L^\infty(I,Y)\quad (n\to\infty)\,,
		\label{eq:3.6}
		\\[-1mm]
		P_H(jv_n)(t)\;\;\weakto\;\;
		(jv)(t) \quad\text{ in }H\quad(n\to \infty)\quad\text{for a.e. }t\in I\,,\label{eq:3.7}
	\end{gather}
and
\begin{align}
	\limsup_{n\to\infty}{\langle \mathcal{A}_{m_n}v_n,
		v_n-v\rangle_{L^p(I,X_{m_n})\cap_{\smash{j}} L^q(I,Y)}}\leq 0\,,\label{eq:3.8}
\end{align}
for every $w\in L^p(I,V)\cap_{\smash{j}} L^q(I,H)$, it follows that $$\langle \mathcal{A}v,v-w\rangle_{L^p(I,V)\cap_{\smash{j}} L^q(I,H)}
\leq	\liminf_{n\to\infty}{\langle \mathcal{A}_{m_n}v_n,
	v_n-w\rangle_{L^p(I,X_{m_n})\cap_{\smash{j}} L^q(I,Y)}}\,.$$
In particular,
due to Definition~\ref{3.1},~\eqref{eq:3.6}~implies~that~${v\in L^p(I,V)\cap_{\smash{j}} L^\infty(I,H)}$.
\end{definition}
The following theorem gives sufficient conditions on a family of
non-conforming pseudo-monotone operators such that the  induced
operators are non-conforming Bochner pseudo-monotone.
\begin{theorem}\label{3.9}
Let $(V_n)_{n\in\mathbb{N}} $ be a Bochner non-conforming
approximation of $V$ with respect to
$(X_n)_{n\in\mathbb{N}}$. 
Moreover, let
${A(t)\colon\hspace*{-0.05em}V\hspace*{-0.05em}\to\hspace*{-0.05em}
	V^*}$,~${t\hspace*{-0.05em}\in\hspace*{-0.05em} I}$,~be a
family that possesses for some $q \in [p,\infty)$
the~following properties:
\begin{itemize}
	\item[(A.1)] \hypertarget{A.1}  $A(t)\colon V\to V^*$ is for almost every $t\in I$ demi-continuous.
	\item[(A.2)] \hypertarget{A.2}  $(t\mapsto A(t)v)\colon I\to V^*$ is Bochner measurable  for every $v\in V$.
	\item[(A.3)] \hypertarget{A.3}  There exist a constant   $\gamma\ge 0$  such that 	
	for almost every $t\in I$ and every $v,w\in V$, it holds
	\begin{align*}
		\vert\langle A(t)v,w\rangle_V\vert \leq \gamma\,\big[1+\|jv\|_H^q+\|v\|_V^p+\|jw\|_H^q+\|w\|_V^p\big]\,.
	\end{align*}
\end{itemize}
Moreover, let $A_n(t)\colon X_n\to X_n^*$, $t\in I$, $n\in \mathbb{N}$, be families of operators with the following properties:
\begin{itemize}
	\item[(AN.1)] \hypertarget{AN.1}  $A_n(t)\colon X_n\to X_n^*$, $n\in \mathbb{N}$, 
	are for almost every $t\in I$ demi-continuous 
	and non-conforming pseudo-monotone 
	with respect to $(V_n)_{n\in \mathbb{N}}$ and $A(t)\colon V\to V^*$.
	\item[(AN.2)] \hypertarget{AN.2} $(t\mapsto A_n(t)x_n)\colon I\to X_n^*$ is 
	Bochner measurable for every $x_n\in X_n$ and $n\in \mathbb{N}$.
	\item[(AN.3)] \hypertarget{AN.3} There exist constants  $c_0>0$ 
	and $c_1,c_2\ge 0$, independent of $n\in \mathbb{N}$, such that 
	for almost every $t\in I$ and every $v_n\in V_n$, it holds
	\begin{align*}
		\langle A_n(t)v_n,v_n\rangle_{X_n}\ge c_0\,\|v_n\|_{X_n}^p-c_1\,\|jv_n\|_Y^2-c_2\,.
	\end{align*}
	\item[(AN.4)] \hypertarget{AN.4} There exist constants   $\gamma\ge 0$ 
	and $\lambda\in \left(0,c_0\right)$, independent of $n\in \mathbb{N}$, such that 	
	for almost every $t\in I$ and every $x_n,y_n\in X_n$, it holds
	\begin{align*}
		\vert\langle A_n(t)x_n,y_n\rangle_{X_n}\vert \leq \lambda\,\|x_n\|_{X_n}^p
		+\gamma\,\big[1+\|jx_n\|_Y^q+\|jy_n\|_Y^q+\|y_n\|_{X_n}^p\big]\,.
	\end{align*}
	
\end{itemize}
Then, the through the families $A_n(t)\colon X_n\to X_n^*$, $t\in I$, $n\in \mathbb{N}$, 
induced sequence of operators 
$\mathcal{A}_n\colon L^p(I,X_n)$ ${\cap_{\smash{j}} L^q(I,Y)\to (L^p(I,X_n)\cap_{\smash{j}} L^q(I,Y))^*}$, $n\in \mathbb{N}$, 
is well-defined, bounded, and non-conforming Bochner 
pseudo-monotone with respect to $(V_n)_{n\in \mathbb{N}}$ and the through $ A(t)\colon V\to V^* $, $ t\in I$, induced operator 
$\mathcal{A}\colon L^p(I,V)\cap_{\smash{j}} L^q(I,H)\to (L^p(I,V)\cap_{\smash{j}} L^q(I,H))^*$.
\end{theorem}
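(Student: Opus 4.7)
The plan is to verify well-definedness and boundedness of $\mathcal{A}_n$ and then to derive non-conforming Bochner pseudo-monotonicity by reducing it, via Fatou's lemma and the subsequence principle, to the pointwise hypothesis (\hyperlink{AN.1}{AN.1}). The measurability of $t \mapsto A_n(t) v(t)$ for $v \in L^p(I, X_n) \cap_{\smash{j}} L^q(I, Y)$ follows from (\hyperlink{AN.2}{AN.2}) by approximating $v$ by simple functions and passing to the pointwise limit using the demi-continuity in (\hyperlink{AN.1}{AN.1}); boundedness of $\mathcal{A}_n$ then follows from (\hyperlink{AN.4}{AN.4}) and H\"older's inequality.

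For the main assertion, suppose $v_n \in L^p(I, V_{m_n}) \cap L^\infty(I, Y)$ satisfies \eqref{eq:3.6}--\eqref{eq:3.8} and set $a_n(t) \coloneqq \langle A_{m_n}(t) v_n(t), v_n(t) - v(t)\rangle_{X_{m_n}}$. Combining (\hyperlink{AN.3}{AN.3}), (\hyperlink{AN.4}{AN.4}), and the uniform $L^\infty(I, Y)$ bound on $jv_n$ inherent in \eqref{eq:3.6}, one has the pointwise lower bound $a_n(t) \geq (c_0 - \lambda)\|v_n(t)\|_{X_{m_n}}^p - g(t)$ with $g \in L^1(I)$ depending only on $v$. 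The crucial pointwise claim is $\liminf_n a_n(t) \geq 0$ for a.e.\ $t \in I$: assuming for contradiction $\liminf_n a_n(t_0) = L(t_0) < 0$ on a set of positive measure, I would extract a $t_0$-dependent subsequence along which $a_n(t_0) \to L(t_0)$. The lower bound forces $\|v_n(t_0)\|_{X_{m_n}}$ to stay bounded along this subsequence, so Proposition~\ref{3.2}(ii) together with \eqref{eq:3.7} delivers $jv_n(t_0) \rightharpoonup jv(t_0)$ in $Y$, and the pointwise non-conforming pseudo-monotonicity of $(A_n(t_0))_{n\in\mathbb{N}}$ from (\hyperlink{AN.1}{AN.1}), invoked with test $v(t_0) \in V$, yields $0 \leq L(t_0)$, a contradiction.

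Fatou applied to $a_n + g \geq 0$, together with \eqref{eq:3.8}, then forces $\int_I \liminf_n a_n \, \mathrm{d}t = 0 = \lim_n \int_I a_n \, \mathrm{d}t$. Decomposing $a_n = a_n^+ - a_n^-$, the dominated convergence theorem (using $a_n^- \leq g$ and $\limsup_n a_n^-(t) \leq 0$ a.e.) gives $a_n^- \to 0$ in $L^1(I)$, whence $a_n \to 0$ in $L^1(I)$. Extracting a subsequence $\Lambda$ with $a_n(t) \to 0$ for a.e.\ $t$, the lower bound shows $\|v_n(t)\|_{X_{m_n}}$ is bounded along $\Lambda$ pointwise a.e., and Proposition~\ref{3.2}(ii) then delivers $jv_n(t) \rightharpoonup jv(t)$ in $Y$ a.e. Hence, for every $w \in L^p(I, V) \cap_{\smash{j}} L^q(I, H)$ (so that $w(t) \in V$ a.e.), the pointwise non-conforming pseudo-monotonicity (\hyperlink{AN.1}{AN.1}) along $\Lambda$ gives
\begin{align*}
\langle A(t) v(t), v(t) - w(t)\rangle_V \leq \liminf_{\Lambda \ni n \to \infty} \langle A_{m_n}(t) v_n(t), v_n(t) - w(t)\rangle_{X_{m_n}} \quad \text{for a.e.}\ t \in I\,.
\end{align*}
A second application of Fatou's lemma (with an analogous $L^1(I)$ lower bound on $b_n(t,w) \coloneqq \langle A_{m_n}(t) v_n(t), v_n(t) - w(t)\rangle_{X_{m_n}}$, obtained as before) followed by integration yields the asserted $\liminf$-bound along $\Lambda$; since the same construction applies to every subsequence of the original index set (the hypotheses \eqref{eq:3.6}--\eqref{eq:3.8} being inherited by any subsequence), the standard subsequence principle transfers the bound to the full sequence.

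The main obstacle is the gap between the integrated hypothesis \eqref{eq:3.8} and the pointwise $\limsup \leq 0$ condition embedded in (\hyperlink{AN.1}{AN.1}). The remedy is the coercivity-plus-Fatou argument above, which converts integrated information into pointwise information for $a_n$, combined with Proposition~\ref{3.2}(ii), which upgrades the $H$-weak convergence \eqref{eq:3.7} to the $Y$-weak convergence required by the pointwise pseudo-monotonicity definition.
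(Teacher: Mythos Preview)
Your proposal is correct and follows essentially the same route as the paper: the coercivity--growth lower bound $(\ast)_{w,n,t}$, the pointwise $\liminf a_n(t)\ge 0$ via the bad-index subsequence and Proposition~\ref{3.2}(ii), the Fatou sandwich producing $\int_I a_n\to 0$, the extraction of a pointwise-a.e.\ subsequence, and the second Fatou step with test $w$. The only organizational difference is that the paper fixes $w$ at the outset and passes to a subsequence along which $\liminf_n\langle \mathcal{A}_{m_n}v_n,v_n-w\rangle$ is realized as a limit (so the final inequality is immediate), whereas you run the argument for $w=v$ first and appeal to the subsequence principle at the end; both are fine.
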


\begin{proof}
For the well-definedness and boundedness of $\mathcal A$, and
$\mathcal A_n$, $n\in \mathbb{N}$, we refer to
\cite[Proposition~4.2]{alex-rose-nonconform}, since, in the
proof of these points there, only the demi-continuity is used. We align the proof~of~the~\mbox{non-con}-forming Bochner pseudo-monotonicity into four main steps:

\textit{1. Collecting information.} Let 
$v_n\in L^p(I,V_{m_n})\cap_{\smash{j}} L^\infty(I,Y)$, $n\in\mathbb{N}$, 
where $(m_n)_{n\in\mathbb{N}}\subseteq\mathbb{N}$ with $m_n\to \infty$
$(n\to\infty)$, be a sequence satisfying
\eqref{eq:3.6}--\eqref{eq:3.8}, in particular, let 
$K\coloneqq \sup_{n\in\mathbb{N}}\;\|j v_n\|_{L^\infty(I,Y)}<\infty$. 
For sake of simplicity, we assume that ${m_n=n}$ for all $n\in \mathbb{N}$.
Then, we fix an arbitrary $w\in L^p(I,V)\cap_{\smash{j}} L^q(I,H)$, and 
choose~a~subsequence~$(v_n)_{n\in\Lambda}$, with
$\Lambda\subseteq\mathbb{N}$, such that
\begin{align}
	\lim_{\Lambda\ni n\to\infty}{
		\langle \mathcal{A}_nv_n,v_n-w\rangle_{L^p(I,X_n)\cap_{\smash{j}} L^q(I,Y)}}=
	\liminf_{n\to\infty}{\langle\mathcal{A}_n
		v_n,v_n-w
		\rangle_{L^p(I,X_n)\cap_{\smash{j}} L^q(I,Y)}}\,.\label{eq:3.10}
\end{align}
Due to \eqref{eq:3.7}, there exists a subset
$E_0\subseteq I$, with $I\setminus E_0$ a null set, 
such that for every $t\in E_0$, it holds
\begin{align}
	P_H(jv_n)(t)\weakto(jv)(t)\quad\text{ in }H\quad(n\to \infty)\,.\label{eq:3.11}
\end{align}
Using (\hyperlink{AN.3}{AN.3}) and
(\hyperlink{AN.4}{AN.4}), for every $w\in L^p(I,V)\cap_{\smash{j}} L^q(I,H)$
and almost every~$t\in I$, we obtain
\begin{align}
	\begin{aligned}
		&\langle A_n(t)(v_n(t)),v_n(t)
		-w(t)\rangle_{X_n}
		\\
		&\ge c_0\,\|v_n(t)\|_{X_n}^p-
		c_1\,\|j(v_n(t))\|_Y^2-c_2
		-\langle
		A_n(t)(v_n(t)),w(t)\rangle_{X_n}
		\\
		&\ge 	(c_0-\lambda)\,\|v_n(t)\|_{X_n}^p
		-c_1\,K^2-c_2-\gamma\,\big[1+K^q+\|(jw)(t)\|_Y^q+\|w(t)\|_V^p\big]\,.
	\end{aligned}\label{eq:3.12}
\end{align}
We set $\mu_{w}\coloneqq c_1K^2+c_2
+\gamma\big[1+K^q+\|(jw)(\cdot)\|_Y^q+\|w(\cdot)\|_V^p\big]\in L^1(I)$ for all $w\in L^p(I,V)\cap_{\smash{j}} L^q(I,H)$. Then,  \eqref{eq:3.12}  for every $w\in L^p(I,V)\cap_{\smash{j}} L^q(I,H)$, for almost every $t\in I$, and all $n\in \Lambda$ reads
\begin{align}
	\langle
	A_n(t)(v_n(t)),v_n(t)-
	w(t)\rangle_{X_n}\ge
	(c_0-\lambda)\,\|v_n(t)\|_{X_n}^p
	-\mu_{w}(t)\,. \tag*{$(\ast)_{w,n,t}$}
\end{align}
Next, we define
\begin{align*}
	E_1\coloneqq 
	\big \{t\in E_0 \mid& \;A_n(t)\colon X_n\to X_n^*\text{ is
		non-conforming pseudo-monotone with respect to }(V_n)_{n\in \mathbb{N}}
	\\&\text{ and }A(t)\colon V\to V^*,
	\vert\mu_{v}(t)\vert<\infty\text{
		and }(\ast)_{v,n,t}\text{ holds for all }n\in\Lambda\big \}\,. 
\end{align*}
From the defining properties of $E_1$, it follows
immediately that $I\setminus E_1$ is a null set. \\[-2mm] 

\textit{2. Intermediate objective.} Our next objective is 
to verify that for every $t\in E_1$,  we have that
\begin{align}
	\liminf_{\Lambda\ni n\to\infty}
	{\langle A_n(t)(v_n(t)),v_n(t)
		-v(t)\rangle_{X_n}}\ge 0\,. \tag*{$(\ast\ast )_{n,t}$}
\end{align}
To this end, let us fix an arbitrary
$t\in E_1$ and introduce the set
\begin{align*}
	\Lambda_t\coloneqq \big\{n\in\Lambda\mid
	\langle A_n(t)(v_n(t)),v_n(t)
	-v(t)\rangle_{X_n}< 0\big\}\,.
\end{align*}
We assume, without loss of generality, that $\Lambda_t$
is not finite. Otherwise, $(\ast\ast )_{n,t}$ would already hold~true~for this specific $t\in E_1$ and nothing would be left to
do. But if $\Lambda_t$ is not finite, then, we have that
\begin{align}
	\limsup_{\Lambda_t\ni n\to\infty}
	{\langle A_n(t)(v_n(t)),v_n(t)-
		v(t)\rangle_{X_n}}\leq 0\,.\label{eq:3.13}
\end{align}
Then, the definitions of $\Lambda_t$ and $(\ast)_{v,n,t}$, for every $n\in\Lambda_t$, imply that
\begin{align}
	(c_0-\lambda)\,\|v_n(t)\|_{X_n}^p\leq\langle A(t)
	(v_n(t)),v_n(t)-
	v(t)\rangle_{X_n}+
	\vert\mu_{v}(t)\vert<\vert\mu_{v}(t)\vert
	<\infty\,.\label{eq:3.14}
\end{align}
Due to $\lambda\! <\! c_0$, from   \eqref{eq:3.14}, it follows that  $\sup_{n\in \Lambda_t}{\!\|v_n(t)\|_{X_n}}\!\!<\!\infty$. 
Thus, also using~that~${v(t)\!\in\! V}$~and~\eqref{eq:3.11}, 
Proposition \ref{3.2} (ii) implies that $j(v_n(t))\weakto j(v(t))
$ in $Y$ $(\Lambda_t\ni n\to \infty)$. 
Then, the non-conforming pseudo-monotonicity of $A_n(t)\colon\hspace*{-0.1em} X_n\hspace*{-0.1em}\to\hspace*{-0.1em} X_n^*$, $n\hspace*{-0.1em} \in\hspace*{-0.1em} \mathbb{N}$,
with~respect~to~$(V_n)_{n\in \mathbb{N}}$ and $A(t)\colon V\to V^*$, eventually, yields that
\begin{align*}
	\liminf_{\Lambda_t\ni n\to\infty}
	{\langle A_n(t)(v_n(t)),v_n(t)
		-v(t)\rangle_{X_n}}\ge \langle A(t)(v(t)),v(t)
	-v(t)\rangle_V=0\,.
\end{align*}
Eventually, owing to
$\langle A_n(t)(v_n(t)), v_n(t) -v(t)\rangle_{X_n}\ge
0$ for all $n\in\Lambda\setminus\Lambda_t$,
$(\ast\ast)_t$ holds for all $t\in E_1$. \\[-2mm] 

\textit{3. Switching to the image space level.} In this passage, we
verify the existence of a cofinal subset $\Lambda_0\subseteq\Lambda$ 
such that for almost every $t\in I$, it holds
\begin{align}
	\begin{gathered}
		\sup_{n\in \Lambda_0}{\|v_n(t)\|_{X_n}}<\infty\,,
		\qquad j(v_n(t))\weakto
		j(v(t))\quad\text{ in }Y\quad(\Lambda_0\ni n\to \infty)\,,\\
		\limsup_{\Lambda_0\ni n\to\infty}
		{\langle A_n(t)(v_n(t)),v_n(t)
			-v(t)\rangle_{X_n}}\leq 0\,,
	\end{gathered}
	\label{eq:3.15}
\end{align}
which enables us to exploit almost everywhere
the non-conforming pseudo-monotonicity of the operator
family.  For \eqref{eq:3.15}, we use that
$\langle   A_n(t)(v_n(t)),v_n(t)-v(t)\rangle_{X_n}\hspace*{-0.1em}\ge\hspace*{-0.1em}-\mu_{v}(t)$
for all $t\in E_1$and~$n\in\Lambda$ (cf.~$(\ast)_{v,n,t}$), which yields
that Fatou's lemma (cf.~\cite[Theorem 1.18]{Roub}) is
applicable. It yields, also~using~\eqref{eq:3.8},
that\enlargethispage{3mm}
\begin{align}\label{eq:3.16}
	\begin{aligned}
		0&\leq
		\int_I{\liminf_{\Lambda\ni n\to\infty}
			{\langle A_n(s)(v_n(s)),v_n(s)-
				v(s)\rangle_{X_n}}\,\mathrm{d}s}
		\\
		&\leq
		\liminf_{\Lambda\ni n\to\infty}{\int_I{\langle
				A_n(s)(v_n(s)),v_n(s)-v(s)\rangle_{X_n}\,\mathrm{d}s}}
		\leq  0\,.
	\end{aligned}
\end{align}
Thus, proceeding as in the proof of \cite[Proposition~4.2]{alex-rose-nonconform}, we find  a further subsequence
$v_n\in L^p(I,V_n)\cap_{\smash{j}} L^\infty(I,Y)$, $n\hspace{-0.15em}\in \hspace{-0.15em}\Lambda_0$,
with $\Lambda_0\hspace{-0.15em}\subseteq\hspace{-0.15em}\Lambda$ and a subset $E_2\hspace{-0.15em}\subseteq\hspace{-0.15em} E_1$, 
with $ I\setminus E_2$ a null set, such that~for~every~${t\hspace{-0.15em}\in \hspace{-0.15em}E_2}$,~it~holds
\begin{align}
	\lim_{\Lambda_0\ni n\to\infty}
	{\langle A_n(t)(v_n(t)),v_n(t)-v(t)\rangle_{X_n}}= 0\,.\label{eq:3.19}
\end{align}
Then,  \eqref{eq:3.14} and \eqref{eq:3.19}  imply for all $t\in E_2$ 
\begin{align*}
	\limsup_{\Lambda_0\ni n\to\infty}
	{(c_0-\lambda)\,\|v_n(t)\|_{X_n}^p\leq
		\limsup_{\Lambda_0\ni n\to\infty}
		{\langle A_n(t)(v_n(t)),v_n(t)
			-v(t)\rangle_{X_n}}+
		\vert\mu_{v}(t)\vert}
	=\vert\mu_{v}(t)\vert<\infty\,,
\end{align*}
i.e., $\sup_{n\in \Lambda_0}{\|v_n(t)\|_{X_n}}<\infty$ 
for all $t\in E_2$. Thus, 
Proposition \ref{3.2} (ii) and \eqref{eq:3.11} yield~for~all~${t\in E_2}$
\begin{align}
	j(v_n(t))\;\;\weakto\;\; j(v(t))\quad\text{ in }
	Y\quad(\Lambda_0\ni n\to\infty)\,.\label{eq:3.20}
\end{align} 
The relations 
\eqref{eq:3.19} and \eqref{eq:3.20} are just \eqref{eq:3.15}. \\[-2mm] 

\textit{4. Switching to the Bochner--Lebesgue level.} 
The non-conforming
pseudo-monotonicity of the operators $A_n(t)\colon X_n\to X_n^*$  
with respect to $(V_n)_{n\in \mathbb{N}}$ and $A(t)\colon V\to V^*$ for
all $t\in E_2$ implies for~almost~every~${t\in I}$ 
\begin{align*}
	\langle A(t)(v(t)),v(t)-
	w(t)\rangle_V\leq
	\liminf_{\Lambda_0\ni n\to\infty}
	{\langle A_n(t)(v_n(t)),v_n(t)
		-w(t)\rangle_{X_n}}\,.
\end{align*}
Due to $(\ast)_{w,n,t}$, we have that
$\langle A_n(t)(v_n(t)),v_n(t)
-w(t)\rangle_{X_n}\ge-\mu_{w}(t)$
for almost every $t\in I$~and~all~${n\in\Lambda_0}$. Using the definition of the induced operator \eqref{eq:induced},~Fatou's~lemma~and~\eqref{eq:3.10},~we~find~that
\begin{align*}
	\langle \mathcal{A}v,v
	-w\rangle_{L^p(I,V)\cap_{\smash{j}} L^q(I,H)}
	&\leq
	\int_I{\liminf_{\Lambda_0\ni n\to\infty}
		{\langle A_n(s)(v_n(s)),v_n(s)
			-w(s)\rangle_{X_n}}\,\mathrm{d}s}
	\\
	&\leq \liminf_{\Lambda_0\ni n\to\infty}
	{\int_I{\langle A_n(s)(v_n(s)),v_n(s)
			-w(s)\rangle_{X_n}\,\mathrm{d}s}}
	\\
	&=\lim_{\Lambda_0\ni n\to\infty}
	{\langle \mathcal{A}_nv_n,v_n
		-w\rangle_{L^p(I,X_n)\cap_{\smash{j}} L^q(I,Y)}}
	\\
	&\leq \liminf_{n\to\infty}{\langle \mathcal{A}_n
		v_n,v_n-w\rangle_{L^p(I,X_n)\cap_{\smash{j}} L^q(I,Y)}}\,.
\end{align*}
Since $w\in L^p(I,V)\cap_{\smash{j}} L^q(I,H)$ was arbitrary, 
this completes the proof
of Proposition~\ref{3.9}.~
\end{proof}

\subsection{Time discretization}
\label{sec:5}

We now prepare our time discretization and prove some relevant
properties. Let $X$ be a Banach space, and $I\coloneqq \left(0,T\right)$, ${T<\infty}$. 
For $K\in\mathbb{N}$, we define ${\tau\coloneqq \tau_K\coloneqq \frac{T}{K}}$, ${I_k^\tau\coloneqq \left((k-1)\tau,k\tau\right]}$, $k=1,\dots,K$, and 
$\mathcal{I}_\tau \coloneqq \{I_k^\tau\}_{k=1,\dots,K}$. Moreover, we denote by 
\begin{align*}
\mathcal{P}_0(\mathcal{I}_\tau,X)\coloneqq \big\{v\colon I\to X\mid v(s)
=v(t)\text{ in }X\text{ for all }t,s\in I_k^\tau,k=1,\dots,K\big\}
\subseteq L^\infty(I,X)\,,
\end{align*}
the \textit{space of piece-wise constant functions
with respect to $\mathcal{I}_\tau$}. For a 
finite~sequence~$(v^k)_{k=0,\dots,K}\subseteq  X$,~${K\in \mathbb{N}}$, the
\textit{backward difference quotient} operator is
defined via
\begin{align*}
d_\tau v^k\coloneqq \frac{1}{\tau}(v^k-v^{k-1})\quad\text{ in }X\,,\quad k=1,\dots,K\,.
\end{align*}
In addition, we denote by
$\overline{v}^\tau\in \mathcal{P}_0(\mathcal{I}_\tau,X)$, 
the \textit{piece-wise~constant interpolant}, and by 
$\hat{v}^\tau\in W^{1,\infty}(I,X)$, the \textit{piece-wise affine interpolant},  
for every $t\in I_k^\tau$ and $k=1,\dots,K$, $K\in \mathbb{N}$, defined via
\begin{align}\label{eq:polant}
\overline{v}^\tau(t)\coloneqq v^k\,,\qquad\hat{v}^\tau(t)
\coloneqq \Big(\frac{t}{\tau}-(k-1)\Big)v^k+\Big(k-\frac{t}{\tau}\Big)v^{k-1}\quad\text{ in }X\,.
\end{align}
If $(X,Y,j)$ is an evolution triple and $(v^k)_{k=0,\dots,K}\hspace{-0.1em}\subseteq\hspace{-0.1em} X$, $K\hspace{-0.1em}\in\hspace{-0.1em} \mathbb{N}$,
a finite sequence, then, for~any~${k,l\hspace{-0.1em}=\hspace{-0.1em}0,\dots,K}$, it holds the 
\textit{discrete integration-by-parts formula} 
\begin{align}
\int_{k\tau}^{l\tau}{\bigg\langle \frac{d_e\hat{v}^\tau}{dt}(t)\,,
\overline{v}^\tau(t)\bigg\rangle_X\,\mathrm{d}t}
\ge \frac{1}{2}\|jv^l\|_Y^2-\frac{1}{2}\|jv^k\|_Y^2\,,\label{eq:4.2}
\end{align}
which follows from the identity
$\langle d_\tau ev^k,v^k\rangle_X
=\frac{1}{2}d_\tau\|jv^k\|_Y^2+\frac{\tau}{2}\|d_\tau jv^k\|_Y^2$
for every $k=1,\dots,K$.

For the discretization of the right-hand side in \eqref{eq:1.1}, 
we 
employ the \textit{Clem\'ent~$0$-order quasi-interpolation operator}
$\mathscr{J}_\tau\colon L^p(I,X)\to
\mathcal{P}_0(\mathcal{I}_\tau,X)$,
for~every~${v\in L^p(I,X)}$, defined via
\begin{align*}
\mathscr{J}_\tau[v]\coloneqq \sum_{k=0}^K{[v]_k^\tau\chi_{I_k^\tau}}\in
\mathcal{P}_0(\mathcal{I}_\tau,X)\,,\qquad\text{where }\quad[v]_k^\tau\coloneqq \dashint\nolimits_{\hspace*{-1.5mm}I_k^\tau}{v(s)\,\mathrm{d}s}\in X\,.
\end{align*}
\begin{proposition}\label{4.4}
For every $v\in L^p(I,X)$, it holds:\enlargethispage{3mm}
\begin{itemize}
\item[(i)] $\mathscr{J}_\tau[v]\to v$ in $L^p(I,X)$ $(\tau\to 0)$, 
i.e., $\bigcup_{\tau>0}\mathcal{P}_0(\mathcal{I}_\tau,X)$ is dense in $L^p(I,X)$.
\item[(ii)] $\sup_{\tau>0}{\|\mathscr{J}_\tau[v]\|_{L^p(I,X)}}\leq \|v\|_{L^p(I,X)}$.
\end{itemize}
\end{proposition}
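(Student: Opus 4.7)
The two statements have different flavors: (ii) is a one-line Jensen estimate, while (i) is a standard continuity-plus-density argument. I would handle (ii) first, since the bound there is what powers the density step in (i).

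For (ii), the plan is to apply Jensen's inequality to the Bochner mean: for each $k=1,\dots,K$, the triangle inequality for the Bochner integral together with convexity of $s\mapsto s^p$ gives
\begin{align*}
\|[v]_k^\tau\|_X^p \;\leq\; \Big(\dashint\nolimits_{\hspace*{-1.5mm}I_k^\tau} \|v(s)\|_X \,\mathrm{d}s\Big)^p \;\leq\; \dashint\nolimits_{\hspace*{-1.5mm}I_k^\tau} \|v(s)\|_X^p \,\mathrm{d}s \;=\; \frac{1}{\tau}\int_{I_k^\tau} \|v(s)\|_X^p \,\mathrm{d}s\,.
\end{align*}
Multiplying by $\tau$ (the measure of $I_k^\tau$) and summing over $k$ telescopes to $\|\mathscr{J}_\tau[v]\|_{L^p(I,X)}^p\leq \|v\|_{L^p(I,X)}^p$. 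This also shows $\mathscr{J}_\tau$ is a linear contraction on $L^p(I,X)$, a fact I will reuse in (i).

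For (i), I would use the standard three-$\varepsilon$ approach based on density of $C(\overline{I},X)$ in $L^p(I,X)$. Fix $v\in L^p(I,X)$ and $\varepsilon>0$, and pick $w\in C(\overline{I},X)$ with $\|v-w\|_{L^p(I,X)}<\varepsilon$. Since $w$ is uniformly continuous on $\overline{I}$, for every $\eta>0$ there is $\delta>0$ such that $\|w(t)-w(s)\|_X<\eta$ whenever $|t-s|<\delta$; hence for $\tau<\delta$ and $t\in I_k^\tau$ one estimates pointwise
\begin{align*}
\|w(t)-\mathscr{J}_\tau[w](t)\|_X \;=\; \Big\|\dashint\nolimits_{\hspace*{-1.5mm}I_k^\tau}(w(t)-w(s))\,\mathrm{d}s\Big\|_X \;\leq\; \eta\,,
\end{align*}
so $\|w-\mathscr{J}_\tau[w]\|_{L^p(I,X)}\leq \eta\,T^{1/p}\to 0$ as $\tau\to 0$. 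Combining this with linearity of $\mathscr{J}_\tau$ and the contraction bound from (ii) applied to $v-w$, a standard triangle-inequality estimate
\begin{align*}
\|v-\mathscr{J}_\tau[v]\|_{L^p(I,X)} \;\leq\; \|v-w\|_{L^p(I,X)} + \|w-\mathscr{J}_\tau[w]\|_{L^p(I,X)} + \|\mathscr{J}_\tau[w-v]\|_{L^p(I,X)}
\end{align*}
shows that $\limsup_{\tau\to 0}\|v-\mathscr{J}_\tau[v]\|_{L^p(I,X)}\leq 2\varepsilon$, which gives the claim. The density assertion $\bigcup_{\tau>0}\mathcal{P}_0(\mathcal{I}_\tau,X)$ being dense in $L^p(I,X)$ is then an immediate consequence, since each $\mathscr{J}_\tau[v]$ lies in $\mathcal{P}_0(\mathcal{I}_\tau,X)$.

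The only mildly delicate point is (strong) measurability of $\mathscr{J}_\tau[v]$ and well-definedness of the Bochner averages $[v]_k^\tau$, which I would simply record up front: by Bochner integrability of $v$ each $[v]_k^\tau\in X$ is defined and the resulting piecewise-constant function is trivially strongly measurable. No genuine obstacle arises — the proof is classical, with Jensen powering the bound and uniform continuity on a dense subset powering the convergence.
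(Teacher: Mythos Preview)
Your argument is correct and is the standard one: Jensen's inequality for the contraction in~(ii), then density of $C(\overline{I},X)$ plus the three-$\varepsilon$ estimate for~(i). The paper itself does not supply a proof at all --- it simply refers to \cite[Remark 8.15]{Roub} --- so your self-contained write-up is more explicit than what the paper provides, but it is exactly the kind of argument that reference encapsulates.
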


\begin{proof}
See \cite[Remark 8.15]{Roub}.
\end{proof}

Since we treat non-autonomous evolution equations, we need
to discretize the time-dependent~family of operators in
\eqref{eq:1.1}.  To this end, we assume that the assumptions
of Theorem \ref{3.9} are satisfied and that $n \hspace*{-0.1em}\in\hspace*{-0.1em} \setN$ is
fixed but arbitrary. 
The \textit{$k$-th temporal mean}
$[A_n]^\tau_k\colon X_n\hspace*{-0.1em}\to\hspace*{-0.1em} X_n^*$,
$k=1,\dots,K$,~of~${A_n(t)\colon X_n\to X^*_n}$,~${t\in  I}$, for every
$w\in X_n$, is defined via
\begin{align*}
[A_n]^\tau_k w\coloneqq \dashint\nolimits_{\hspace*{-1.5mm}I_k^\tau}{A_n(s)w\,\mathrm{d}s}\in X_n^*\,.
\end{align*} 
Then, the \textit{Clement $0$-order quasi-interpolant}
$\mathscr{J}_\tau[A_n](t)\colon X_n\to X_n^*$, $t\in I$, of
$A_n(t)\colon X_n\to X_n^*$, $t\in I$, for every $w\in X_n$, is
defined via  
\begin{align*}
\mathscr{J}_\tau[A_n](t)w\coloneqq \sum_{k=1}^{K}{\chi_{I_k^\tau}(t)[A_n]^\tau_kw}\in X_n^*\quad\text{ for a.e. }t\in I\,.
\end{align*} 
The \textit{Clement $0$-order quasi-interpolant}
$\mathscr{J}_\tau[\mathcal{A}_n]\colon L^p(I,X_n)\cap_{\smash{j}}L^q(I,Y)\to 
(L^p(I,X_n)\cap_{\smash{j}}L^q(I,Y))^*$ of
$\mathcal{A}_n\colon L^p(I,X_n)\cap_{\smash{j}}L^q(I,Y)\to
(L^p(I,X_n)\cap_{\smash{j}}L^q(I,Y))^*$, for every
$v,w\in L^p(I,X_n)\cap_{\smash{j}}L^q(I,Y)$,~is~defined~via
\begin{align*}
\langle \mathscr{J}_\tau[\mathcal{A}_n]v,w\rangle_{L^p(I,X_n)\cap_{\smash{j}}L^q(I,Y)}
\coloneqq \int_I{\langle \mathscr{J}_\tau[A_n] (t)(v(t)),w(t)\rangle_{X_n}\,\mathrm{d}t}\,.
\end{align*} 
Note that $\mathscr{J}_\tau[\mathcal{A}_n]$ is just the induced operator of
the family of operators ${\mathscr{J}_\tau[A_n](t)\colon X_n\to X_n^*}$,~${t \in I}$.
\begin{proposition}
\label{4.6}
Let the assumptions
of Theorem \ref{3.9} be satisfied and let $n \in \setN$ be
fixed but arbitrary.  
\begin{itemize}
\item[(i)] $[A_n]^\tau_k\colon X_n\to X_n^*$ is well-defined, bounded,  
pseudo-monotone, and satisfies: \\[-3mm]
\begin{itemize}
	\item[(i.a)] $\langle [A_n]^\tau_kx_n,y_n\rangle_{X_n}
	\leq \lambda\,\|x_n\|_{X_n}^p+\gamma\,[1+\|jx_n\|_Y^q+\|jy_n\|_Y^q+\|y_n\|_{X_n}^p]$
	for every $x_n,y_n\in X_n$. \\[-3mm]
	\item[(i.b)] $\langle [A_n]^\tau_kv_n,v_n\rangle_{X_n}
	\ge c_0\,\|v_n\|_{X_n}^p-c_1\,\|jv_n\|_Y^2-c_2$ for  every $v_n\in V_n$. \\[-3mm]
\end{itemize}
\item[(ii)]  $\mathscr{J}_\tau[A_n](t)\colon X_n\to X_n^*$, $t\in I$, 
satisfies the conditions (\hyperlink{AN.1}{AN.1})--(\hyperlink{AN.4}{AN.4}). \\[-3mm]

\item[(iii)] $\mathscr{J}_\tau[\mathcal{A}_n]\!\colon\!L^p(I,X_n)\cap_{\smash{j}}\!L^q(I,Y)
\!\to\!(L^p(I,X_n)\cap_{\smash{j}}\!L^q(I,Y))^*$~is~well-defined,~bounded~and~satisfies: \\[-3mm]
\begin{itemize}
	\item[(iii.a)] There holds	$\langle \mathscr{J}_\tau[\mathcal{A}_n]v_\tau,w\rangle_{L^p(I,X_n)\cap_{\smash{j}}L^q(I,Y)}=\langle \mathcal{A}_nv_\tau, \mathscr{J}_\tau[w]\rangle_{L^p(I,X_n)\cap_{\smash{j}}L^q(I,Y)}$
	for every \linebreak${v_\tau\in \mathcal{P}_0(\mathcal{I}_\tau,X_n)}$ and
	$w\in L^p(I,X_n)\cap_{\smash{j}}L^q(I,Y)$.
	\item[(iii.b)] If the family $v_\tau\in\mathcal{P}_0(\mathcal{I}_\tau,X_n)$,
	$\tau>0$, satisfies $\sup_{\tau>0}{\|v_\tau\|_{L^p(I,X_n)\cap_{\smash{j}}L^q(I,Y)}}<\infty$, then $	\mathcal{A}_nv_\tau-\mathscr{J}_\tau[\mathcal{A}_n]v_\tau\weakto{0}$ in $(L^p(I,X_n)\cap_{\smash{j}}L^q(I,Y))^*$ $(\tau\to 0)$.
	\item[(iii.c)] If  $v_\tau\in\mathcal{P}_0(\mathcal{I}_\tau,X_n)$, 
	then $\|\mathscr{J}_\tau[\mathcal{A}_n]v_\tau\|_{(L^p(I,X_n)\cap_{\smash{j}}L^q(I,Y))^*}
	\leq \|\mathcal{A}_nv_\tau\|_{(L^p(I,X_n)\cap_{\smash{j}}L^q(I,Y))^*}$.
\end{itemize}
\end{itemize}
\end{proposition}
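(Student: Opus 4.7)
The plan is to build up the three parts sequentially, with (i) providing the pointwise-in-$t$ building blocks, (ii) lifting them to the Clement interpolant, and (iii) integrating in time.

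For part (i), well-definedness of $[A_n]^\tau_k w\in X_n^*$ follows from (\hyperlink{AN.2}{AN.2}) and (\hyperlink{AN.4}{AN.4}): the former gives Bochner measurability of $s\mapsto A_n(s)w$, the latter an $L^1(I_k^\tau)$-integrable majorant. Boundedness and the two estimates (i.a), (i.b) are then obtained by averaging (\hyperlink{AN.4}{AN.4}) and (\hyperlink{AN.3}{AN.3}) over $I_k^\tau$, respectively, since those bounds are uniform in $t$. For pseudo-monotonicity, I would take a sequence $v_m\in V_n$ with $jv_m\weakto jv$ in $Y$ and $\limsup_m\langle[A_n]^\tau_k v_m, v_m-v\rangle_{X_n}\leq 0$; the coercivity (i.b) forces $(v_m)_{m\in\mathbb{N}}$ to be $X_n$-bounded, and then a pointwise-in-$s\in I_k^\tau$ application of the non-conforming pseudo-monotonicity of $A_n(s)$ from (\hyperlink{AN.1}{AN.1}), combined with a Fatou argument as in Steps 2--4 of the proof of Theorem~\ref{3.9}, yields the required $\liminf$ inequality after dividing by $\vert I_k^\tau\vert$.

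Part (ii) is then routine: condition (\hyperlink{AN.2}{AN.2}) is automatic because $t\mapsto\mathscr{J}_\tau[A_n](t)w$ is simple, hence Bochner measurable; (\hyperlink{AN.3}{AN.3}) and (\hyperlink{AN.4}{AN.4}) are inherited pointwise in $t$ from (i.b) and (i.a) with the same constants $c_0,c_1,c_2,\gamma,\lambda$; and demi-continuity together with non-conforming pseudo-monotonicity in (\hyperlink{AN.1}{AN.1}) follow a.e.~in $t$ from part (i), applied to the relevant $[A_n]^\tau_k$.

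For part (iii), well-definedness and boundedness come from part (ii) and the corresponding part of Theorem~\ref{3.9}. Identity (iii.a) is a direct Fubini-type computation exploiting that $v_\tau$ is piecewise constant:
\begin{align*}
	\langle\mathscr{J}_\tau[\mathcal{A}_n]v_\tau,w\rangle_{L^p(I,X_n)\cap_{\smash{j}}L^q(I,Y)}
	&=\sum_{k=1}^{K}\int_{I_k^\tau}\dashint_{I_k^\tau}\langle A_n(s)(v_\tau(s)),w(t)\rangle_{X_n}\,\mathrm{d}s\,\mathrm{d}t
	\\
	&=\sum_{k=1}^{K}\int_{I_k^\tau}\langle A_n(s)(v_\tau(s)),[w]_k^\tau\rangle_{X_n}\,\mathrm{d}s
	\\
	&=\langle\mathcal{A}_n v_\tau,\mathscr{J}_\tau[w]\rangle_{L^p(I,X_n)\cap_{\smash{j}}L^q(I,Y)},
\end{align*}
and (iii.c) then follows by testing (iii.a) over $w$ with $\|w\|_{L^p(I,X_n)\cap_{\smash{j}}L^q(I,Y)}\leq 1$ and invoking $\|\mathscr{J}_\tau[w]\|\leq\|w\|$ from Proposition~\ref{4.4}(ii), applied separately to the two components of the intersection norm. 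The main obstacle is (iii.b): by (iii.a), for every fixed $w\in L^p(I,X_n)\cap_{\smash{j}}L^q(I,Y)$,
\begin{align*}
	\langle\mathcal{A}_n v_\tau-\mathscr{J}_\tau[\mathcal{A}_n]v_\tau,w\rangle_{L^p(I,X_n)\cap_{\smash{j}}L^q(I,Y)}=\langle\mathcal{A}_n v_\tau, w-\mathscr{J}_\tau[w]\rangle_{L^p(I,X_n)\cap_{\smash{j}}L^q(I,Y)},
\end{align*}
so the task reduces to showing the right-hand side vanishes as $\tau\to 0$. The growth bound (\hyperlink{AN.4}{AN.4}), integrated in time, together with the assumed uniform bound on $v_\tau$, gives $\sup_{\tau>0}\|\mathcal{A}_n v_\tau\|_{(L^p(I,X_n)\cap_{\smash{j}}L^q(I,Y))^*}<\infty$; and Proposition~\ref{4.4}(i), applied to $w$ in $L^p(I,X_n)$ and to $jw$ in $L^q(I,Y)$ (using the linearity of $j$, which gives $j\mathscr{J}_\tau[w]=\mathscr{J}_\tau[jw]$), yields $w-\mathscr{J}_\tau[w]\to 0$ strongly in the intersection space, closing the argument.
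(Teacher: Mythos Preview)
Your arguments for well-definedness, boundedness, the estimates (i.a)--(i.b), the identity (iii.a), the norm bound (iii.c), and the weak convergence (iii.b) are all correct and essentially match what the referenced proof in \cite[Proposition~5.2]{alex-rose-nonconform} does. In particular, the Fubini computation for (iii.a) and the reduction of (iii.b) to $\mathscr{J}_\tau[w]\to w$ via Proposition~\ref{4.4}(i) are exactly right.

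There is, however, a genuine confusion in your treatment of pseudo-monotonicity in (i), and this propagates into (ii). You write that you would apply ``the non-conforming pseudo-monotonicity of $A_n(s)$ from (AN.1)'' to a sequence $v_m\in V_n$ with \emph{fixed} $n$. But non-conforming pseudo-monotonicity (Definition~\ref{3.4.0}) is a property of the \emph{sequence} $(A_n)_{n\in\mathbb{N}}$ tested against diagonal sequences $v_n\in V_{m_n}$ with $m_n\to\infty$; it says nothing about a single operator $A_n(s)$ on a single space $X_n$. For fixed $n$, condition (AN.1) only gives demi-continuity, which is not enough to run a Fatou argument toward classical pseudo-monotonicity of $[A_n]^\tau_k$. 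In fact, under the bare hypotheses of Theorem~\ref{3.9}, classical pseudo-monotonicity of $[A_n]^\tau_k\colon X_n\to X_n^*$ is not available; this is precisely why the paper, when it actually needs this property in the proof of Proposition~\ref{5.1}, first restricts to $V_n$ and invokes the additional assumption (AN.5) before appealing to Proposition~\ref{4.6}(i). Your Fatou-type argument \emph{does} work, but only once one has classical pseudo-monotonicity of $A_n(s)$ for almost every $s$ as an input --- which is what \cite[Proposition~5.2]{alex-rose-nonconform} assumes in its setting.

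Similarly, for (ii) you say that (AN.1) for $\mathscr{J}_\tau[A_n](t)$ ``follows a.e.\ in $t$ from part (i)''. But (AN.1) asks for \emph{non-conforming} pseudo-monotonicity of the sequence in $n$, whereas part~(i) concerns classical pseudo-monotonicity for fixed $n$; these are different notions and neither implies the other. The correct route for the non-conforming part of (AN.1) is to average the non-conforming pseudo-monotonicity of $A_n(s)$, $n\in\mathbb{N}$, over $s\in I_k^\tau$ via a Fatou argument (mirroring Steps~2--4 of Theorem~\ref{3.9} with $I$ replaced by $I_k^\tau$), not to deduce it from~(i).
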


\begin{proof}
See \cite[Proposition 5.2]{alex-rose-nonconform}.
\end{proof}

\subsection{Analysis of the fully-discrete non-conforming approximation}

\label{sec:6}

In this section, we formulate the exact framework of a non-conforming Rothe--Galerkin approximation, establish its well-posedness (i.e., the existence of iterates), and
its stability (i.e., the boundedness of the corresponding double sequence of piece-wise
constant and piece-wise affin interpolants). Furthermore, we prove the main result of this paper, Theorem \ref{5.17}, which shows the weak convergence of a diagonal subsequence toward~a~weak~solution~of~\eqref{eq:1.1}.

\begin{asum}\label{asum}
{\upshape
Let  $I\coloneqq \left(0,T\right)$, $T<\infty$, and $1<p<\infty$. 
We make the following assumptions:\enlargethispage{5mm}
\begin{itemize}
\item[(i)] \textit{Spaces:} $(V,H,j)$ and $(X_n,Y,j)$, $n\in \mathbb{N}$, 
are as in Definition \ref{3.1} and the  sequence $(V_n)_{n\in\mathbb{N}}$  is a non-conforming approximation of $V$ with respect to $(X_n)_{n\in\mathbb{N}}$.
\item[(ii)] \textit{Initial data:} $v_0\in H$, $v_n^0\in V_n$, $n\in\mathbb{N}$, 
with ${v_n^0\to v_0}$~in~$Y$~${(n\to \infty)}$ and 
${\sup_{n\in \mathbb{N}}{\|jv_n^0\|_Y}\leq \|v_0\|_H}$.
\item[(iii)] \textit{Right-hand side:} Let $f\in L^{p'}(I,V^*)$ and 
$f_n\in L^{p'}(I,X_n^*)$, $n\in \mathbb{N}$,  be a sequence such that
\begin{itemize}
	\item[] (BN.1) \hypertarget{BN.1}{} $\sup_{n\in \mathbb{N}}{\|f_n\|_{L^{p'}(I,X_n^*)}}<\infty$.
	\item[] (BN.2)\hypertarget{BN.2}{} For  every sequence $v_n\in L^p(I,V_{m_n})\cap_{\smash{j}}L^\infty(I,Y)$, $n\in \mathbb{N}$, where $(m_n)_{n\in\setN}\subseteq\setN$ with $ m_n\to \infty $ $(n\to\infty)$, from both $\sup_{n\in \mathbb{N}}\|v_n\|_{L^p(I,X_{m_n})\cap_{\smash{j}}L^\infty(I,Y)}<\infty$ and $v_n\,\smash{\overset{\ast}{\rightharpoondown}}\, v$ in $L^\infty(I,Y)$ $(n\to \infty)$, it follows that $\int_I{\langle f_{m_n}(s),v_n(s)\rangle_{X_{m_n}}\,\mathrm{d}s}\to \int_I{\langle f(s),v(s)\rangle_V\,\mathrm{d}s}$ $ {(n\to\infty)}$.
\end{itemize}
\item[(iv)] \textit{Operators:} $A_n(t)\colon X_n\to X_n^*$, $n\in \mathbb{N}$,
and $A(t)\colon V\to V^*$, $t\in I$, are families operators satisfying 
(\hyperlink{AN.1}{AN.1})--(\hyperlink{AN.4}{AN.4}) and 
(\hyperlink{A.1}{A.1})--(\hyperlink{A.3}{A.3}) with respect to 
$(V_n)_{n\in \mathbb{N}}$, respectively. Moreover, let denote
${\mathcal{A}_n\colon L^p(I,X_n)\cap_{\smash{j}}L^\infty(I,Y)\to (L^p(I,X_n)\cap_{\smash{j}}L^q(I,Y))^*}$,
$n\in \mathbb{N}$,  and $\mathcal{A}\colon L^p(I,V)\cap_{\smash{j}}L^\infty(I,H)\to (L^p(I,V)\cap_{\smash{j}}L^q(I,H))^*$
 the corresponding induced operators. In addition, we assume that the operators 
$A_n(t)\colon X_n\to X_n^*$, $n\in \mathbb{N}$, $t\in I$, satisfy the following condition:\vspace{1mm}
\begin{itemize}
	\item[] (AN.5) \hypertarget{AN.5}{} $(\mathrm{id}_{V_n})^*\circ A_n(t)\circ\mathrm{id}_{V_n}\colon V_n\to V_n^*$, $n\in \mathbb{N}$, are for almost every $t\in I$ pseudo-monotone.
\end{itemize}
\end{itemize}
}
\end{asum}

\begin{remark}
Note that if $\textup{dim}(V_n)<\infty$ for every $n\in \mathbb{N}$, then (\hyperlink{AN.5}{AN.5}) is satisfied  if and only if $(\mathrm{id}_{V_n})^*\circ A_n(t)\circ\mathrm{id}_{V_n}\colon V_n\to V_n^*$, $n\in \mathbb{N}$, are for almost every $t\in I$ continuous.
\end{remark}

Furthermore, for every $n\in \mathbb{N}$, we define $H_n\coloneqq j(V_n)\subseteq Y$ equipped with $(\cdot,\cdot)_Y$, denote the restriction of $j$ to $V_n$ by $j_n\colon V_n\hspace*{-0.1em}\to \hspace*{-0.1em} H_n$  and the corresponding Riesz isomorphism~with~respect~to~$(\cdot,\cdot)_Y$~by~${R_n\colon H_n\hspace*{-0.1em}\to \hspace*{-0.1em} H_n^*}$. As $j_n\colon V_n\to H_n$ is an isomorphism, the triple $(V_n,H_n,j_n)$ is an evolution triple with canonical~embedding $e_n\coloneqq j_n^*R_nj_n\colon V_n\to V_n^*$, which satisfies
\begin{align}
\langle e_nv_n,w_n\rangle_{V_n}=(jv_n,jw_n)_Y\quad\text{ for all }v_n,w_n\in V_n\,.\label{eq:iden}
\end{align}

Putting all together, leads us to the following algorithm:

\begin{alg}[Non-conforming Rothe--Galerkin scheme]
{\upshape
Let Assumptions \ref{asum} be satisfied. For given
$K,n\in \mathbb{N}$, the sequence of iterates
$(v_n^k)_{k=1,\dots,K}\subseteq V_n$ is given solving the
implicit Rothe--Galerkin scheme for $\tau=\frac{T}{K}$ and
$k=1,\dots,K$
\begin{align}
(d_\tau jv_n^k,jw_n)_Y+\langle [A_n]^\tau_k v_n^k,w_n\rangle_{X_n}= \langle [f_n]_k^\tau,w_n\rangle_{X_n}\quad\text{ for all }w_n\in V_n\,.\label{eq:4.15}
\end{align}
}
\end{alg}

\begin{proposition}[Well-posedness of \eqref{eq:4.15}]\label{5.1}
Let Assumption \ref{asum} be satisfied and set
${\tau_0\coloneqq \smash{\frac{1}{4c_1}}}$. Then, for every $K,n\in \mathbb{N}$ with $\tau=\smash{\frac{T}{K}}<\tau_0$, there exist iterates~${(v_n^k)_{k=1,\dots,K}\subseteq V_n}$~solving~\eqref{eq:4.15}.
\end{proposition}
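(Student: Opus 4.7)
The plan is to reduce \eqref{eq:4.15} at each time step to an operator equation on the reflexive space $V_n$ and solve it by the classical Brezis surjectivity theorem for coercive pseudo-monotone operators. Proceeding by induction on $k=1,\dots,K$, suppose $v_n^{k-1}\in V_n$ is given (with $v_n^0$ from Assumption \ref{asum}(ii)). Using the identity \eqref{eq:iden}, the equation \eqref{eq:4.15} is equivalent to finding $v\in V_n$ such that, for every $w\in V_n$,
\[
\langle e_n v,w\rangle_{V_n}+\tau\,\langle [A_n]^\tau_k v,w\rangle_{X_n}=\langle e_n v_n^{k-1},w\rangle_{V_n}+\tau\,\langle [f_n]^\tau_k,w\rangle_{X_n}.
\]
Setting $T_k\coloneqq e_n+\tau\,(\mathrm{id}_{V_n})^*\circ [A_n]^\tau_k\circ \mathrm{id}_{V_n}\colon V_n\to V_n^*$ and collecting the right-hand side into a functional $L_k\in V_n^*$, the task becomes to show that $T_k$ is surjective, so that $T_k v=L_k$ admits a solution which we take as $v_n^k$.

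I would then verify the two hypotheses of Brezis' theorem. For pseudo-monotonicity, note that $e_n$ is linear, bounded, and monotone (since $\langle e_n v,v\rangle_{V_n}=\|jv\|_Y^2\ge 0$), hence pseudo-monotone; the restricted operator $(\mathrm{id}_{V_n})^*\circ A_n(t)\circ\mathrm{id}_{V_n}$ is pseudo-monotone for a.e.\ $t\in I$ by (\hyperlink{AN.5}{AN.5}), and the same property is inherited by its temporal mean $(\mathrm{id}_{V_n})^*\circ [A_n]^\tau_k\circ\mathrm{id}_{V_n}$ via the averaging argument underlying Proposition \ref{4.6}(i). Because the sum of bounded demi-continuous pseudo-monotone operators on a reflexive Banach space is pseudo-monotone, $T_k$ is pseudo-monotone; boundedness follows directly from Proposition \ref{4.6}(i.a). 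For coercivity, Proposition \ref{4.6}(i.b) yields, for every $v\in V_n$,
\[
\langle T_k v,v\rangle_{V_n}=\|jv\|_Y^2+\tau\,\langle [A_n]^\tau_k v,v\rangle_{X_n}\ge(1-\tau c_1)\,\|jv\|_Y^2+\tau c_0\,\|v\|_{V_n}^p-\tau c_2.
\]
The assumption $\tau<\tau_0=\tfrac{1}{4c_1}$ ensures $1-\tau c_1>\tfrac34$, so that $\langle T_k v,v\rangle_{V_n}/\|v\|_{V_n}\to+\infty$ as $\|v\|_{V_n}\to\infty$ (using $p>1$). Brezis' theorem then produces $v_n^k\in V_n$ with $T_k v_n^k=L_k$, completing the inductive step.

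The only delicate point I anticipate is pseudo-monotonicity of $T_k$ on the subspace $V_n$: Proposition \ref{4.6}(i) only provides pseudo-monotonicity of $[A_n]^\tau_k$ as an operator on the larger space $X_n$, a property which does not in general descend to a closed subspace. This is precisely why (\hyperlink{AN.5}{AN.5}) is imposed at the level of $A_n(t)$ for a.e.\ $t\in I$; a temporal averaging argument analogous to Proposition \ref{4.6}(i) then lifts it to the restricted temporal mean acting on $V_n$. Once this point is settled, the rest of the proof is a routine application of Brezis' surjectivity theorem for coercive pseudo-monotone operators on the reflexive space $V_n$.
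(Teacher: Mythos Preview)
Your proposal is correct and follows essentially the same approach as the paper: reformulate \eqref{eq:4.15} as an operator equation on $V_n$, verify that $e_n$ is monotone and that the restricted temporal mean $(\mathrm{id}_{V_n})^*\circ[A_n]^\tau_k\circ\mathrm{id}_{V_n}$ is bounded and pseudo-monotone via (\hyperlink{AN.5}{AN.5}) and Proposition~\ref{4.6}(i), derive coercivity from Proposition~\ref{4.6}(i.b), and conclude by the main surjectivity theorem for coercive pseudo-monotone operators. The paper scales by $\tfrac{1}{\tau}$ rather than by $\tau$ and cites \cite[Theorem 27.A]{zei-IIB} in place of ``Brezis' theorem'', but these are cosmetic differences; you have also correctly identified (and explained) the one subtle point, namely why (\hyperlink{AN.5}{AN.5}) is needed to transfer pseudo-monotonicity to the subspace $V_n$.
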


\begin{proof}
Using \eqref{eq:iden} and the identity mapping $\textup{id}_{V_n}\colon V_n
\to X$, we observe that \eqref{eq:4.15} is for all $k=1,\dots, K$ equivalent to 
\begin{align}
(\textup{id}_{V_n})^*\big ([f_n]^\tau_k\big
)+\frac{1}{\tau}e_nv_n^{k-1} \in  R\Big(\frac{1}{\tau}e_n+(\textup{id}_{V_n})^*\circ [A_n]^\tau_k \circ
\textup{id}_{V_n}\Big)\,.\label{eq:5.2}
\end{align}
We fix an arbitrary $k=1,\dots,K$. Apparently,
$\frac{1}{\tau}e_n\colon V_n\to V_n^*$ is linear and
continuous.~Using~\eqref{eq:iden}, we deduce that
$\langle
\frac{1}{\tau}e_{n}v_n,v_n\rangle_{V_n}=\frac{1}{\tau}\|j_nv_n\|_Y^2\ge
0$ for all $v_n\in V_n$, i.e.,
$\frac{1}{\tau}e_n\colon V_n\to V_n^*$ is positive definite~and, thus, monotone. Hence,
$\frac{1}{\tau}e_n\colon V_n\to V_n^*$ is pseudo-monotone. Since
the conditions
(\hyperlink{AN.1}{AN.1})--(\hyperlink{AN.4}{AN.4})~are~inhe-rited from $A_n(t)\colon X_n\to X_n^*$ to
$(\textup{id}_{V_n})^*\circ A_n(t) \circ
\textup{id}_{V_n}\colon V_n \to V_n^*$,
$(\textup{id}_{V_n})^*\circ [A_n]^\tau_k\circ
\textup{id}_{V_n}=[(\textup{id}_{V_n})^*\circ A_n \circ
\textup{id}_{V_n}]^\tau_k$, and due to (\hyperlink{AN.5}{AN.5}), Proposition \ref{4.6} (i)
guarantees that the operator
$(\textup{id}_{V_n})^*\circ [A_n]_\tau^k \circ
\textup{id}_{V_n}\colon V_n\to V_n^*$ is bounded and
pseudo-monotone. Altogether, we conclude that the sum
$\frac{1}{\tau}e_n+(\textup{id}_{V_n})^*\circ
[A_n]_\tau^k\circ \textup{id}_{V_n}\colon V_n\to V_n^*$ is
bounded and pseudo-monotone.  In addition, since
$\tau<\smash{\frac{1}{2c_1}}$, combining \eqref{eq:iden} and
Proposition~\ref{4.6}~(i.b), provides for all
$v_n\in V_n$
\begin{align*}
\Big\langle \Big(\frac{1}{\tau}e_n+(\textup{id}_{V_n})^* \circ
[A_n]^\tau_k\circ \textup{id}_{V_n}\Big)v_n,v_n\Big\rangle_{V_n}\ge
3\,c_1\,\|j_nv_n\|_Y^2+c_0\,\|v_n\|_{V_n}^p-c_2\,,
\end{align*}
i.e.,  $\frac{1}{\tau}e_n+(\textup{id}_{V_n})^*\circ
[A_n]^\tau_k\circ \textup{id}_{V_n}\colon V_n\to V_n^*$ is coercive. 
Hence, the main theorem on pseudo-monotone operators, cf. \cite[Theorem 27.A]{zei-IIB},  proves \eqref{eq:5.2}.
\end{proof}

\begin{proposition}[Stability of \eqref{eq:4.15}]\label{apriori}
Let Assumption \ref{asum} be satisfied~and~set~${\tau_0\coloneqq \frac{1}{4c_1}}$. Then, 
there exists a constant $M>0$ (not depending on $K,n\in \mathbb{N}$) 
such that the piece-wise constant interpolants 
$\overline{v}_n^\tau\in \mathcal{P}_0(\mathcal{I}_\tau,V_n)$, $K,n\in \mathbb{N}$ 
with $\tau=\frac{T}{K}\in (0,\tau_0)$, and piece-wise affine interpolants 
$\hat{v}_n^\tau\in W^{1,\infty}(I,V_n)$, $K,n\in \mathbb{N}$ with 
$\tau=\frac{T}{K}\in (0,\tau_0)$, generated by iterates $(v_n^k)_{k=1,\dots,K}\subseteq V_n$, 
$K,n\in \mathbb{N}$ with $\tau=\frac{T}{K}\in (0,\tau_0)$, 
solving \eqref{eq:4.15}, satisfy the following estimates:
\begin{align}
\|\overline{v}_n^\tau\|_{L^p(I,X_n)\cap_{\smash{j}}L^\infty(I,Y)}&\leq M\,,\label{eq:5.4}\\
\|j\hat{v}_n^\tau\|_{L^\infty(I,Y)}&\leq M\,,\label{eq:5.5}\\
\|\mathcal{A}_n\overline{v}_n^\tau\|_{(L^p(I,X_n)\cap_{\smash{j}}L^q(I,Y))^*}&\leq M\,,\label{eq:5.6}\\
\|e_n(\hat{v}_n^\tau-\overline{v}_n^\tau)\|_{L^{q'}(I,V_n^*)}
&\leq\tau\,\smash{\big(\sup_{n\in \mathbb{N}}{\|f_n\|_{L^{p'}(I,X_n^*)}}+M\big)}\,.\label{eq:5.7}
\end{align}
\end{proposition}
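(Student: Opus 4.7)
The plan is to establish the four bounds in order: \eqref{eq:5.4} and \eqref{eq:5.5} come from a standard energy estimate combined with discrete Gronwall; \eqref{eq:5.6} is a direct consequence of the growth condition (\hyperlink{AN.4}{AN.4}) integrated against the bounds just obtained; and \eqref{eq:5.7} follows by reading off $e_n d_\tau v_n^k$ from \eqref{eq:4.15} and translating the $(L^p(I,X_n)\cap_{\smash{j}}L^q(I,Y))^*$-bound into one in $L^{q'}(I,V_n^*)$.

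For the energy estimate I would test \eqref{eq:4.15} with $w_n=v_n^k$ and use the pointwise identity $\langle d_\tau e_nv_n^k,v_n^k\rangle_{V_n}=\tfrac12 d_\tau\|jv_n^k\|_Y^2+\tfrac{\tau}{2}\|d_\tau jv_n^k\|_Y^2$ (the discrete counterpart of \eqref{eq:4.2}). Multiplying by $\tau$ and summing for $k=1,\dots,m\leq K$, the coercivity of $[A_n]_k^\tau$ on $V_n$ from Proposition~\ref{4.6}~(i.b), Young's inequality applied to $\langle[f_n]_k^\tau,v_n^k\rangle_{X_n}$, and Proposition~\ref{4.4}~(ii) (which yields $\sum_k\tau\|[f_n]_k^\tau\|_{X_n^*}^{p'}\leq\|f_n\|_{L^{p'}(I,X_n^*)}^{p'}$) give
\begin{align*}
\tfrac12\|jv_n^m\|_Y^2+\tfrac{c_0}{2}\sum_{k=1}^m\tau\|v_n^k\|_{X_n}^p\leq\tfrac12\|v_0\|_H^2+c_1\sum_{k=1}^m\tau\|jv_n^k\|_Y^2+C,
\end{align*}
where $C$ depends only on $c_2,T,\gamma$ and $\sup_n\|f_n\|_{L^{p'}(I,X_n^*)}$, and Assumption~\ref{asum}~(ii) was used to bound the initial term by $\|v_0\|_H^2$. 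Since $\tau<\tau_0=\tfrac{1}{4c_1}$, transferring the $k=m$ summand on the right to the left gives a prefactor $\tfrac12-c_1\tau>\tfrac14$, and the discrete Gronwall lemma yields $\max_{m\leq K}\|jv_n^m\|_Y+\bigl(\sum_{k=1}^K\tau\|v_n^k\|_{X_n}^p\bigr)^{1/p}\leq M$. This is \eqref{eq:5.4}; \eqref{eq:5.5} follows because $\hat v_n^\tau(t)$ is a convex combination of $v_n^{k-1}$ and $v_n^k$ and $j$ is linear.

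For \eqref{eq:5.6} I would exploit (\hyperlink{AN.4}{AN.4}) pointwise and integrate using the induced-operator definition \eqref{eq:induced}, obtaining for every $w\in L^p(I,X_n)\cap_{\smash{j}}L^q(I,Y)$
\begin{align*}
|\langle\mathcal A_n\overline v_n^\tau,w\rangle|\leq\lambda\|\overline v_n^\tau\|_{L^p(I,X_n)}^p+\gamma\bigl[T+\|j\overline v_n^\tau\|_{L^q(I,Y)}^q+\|jw\|_{L^q(I,Y)}^q+\|w\|_{L^p(I,X_n)}^p\bigr].
\end{align*}
Taking the supremum over test functions of norm at most $1$, invoking \eqref{eq:5.4}, and using $\|j\overline v_n^\tau\|_{L^q(I,Y)}\leq T^{1/q}\|j\overline v_n^\tau\|_{L^\infty(I,Y)}$ (since $I$ is bounded) then yields \eqref{eq:5.6} after enlarging $M$.

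The delicate step is \eqref{eq:5.7}, whose left-hand side sits in the pointwise norm $L^{q'}(I,V_n^*)$ while the natural bound from \eqref{eq:5.6} lives in the coarser $(L^p(I,X_n)\cap_{\smash{j}}L^q(I,Y))^*$. Reading off \eqref{eq:polant}, for $t\in I_k^\tau$ one has $(\hat v_n^\tau-\overline v_n^\tau)(t)=-\tau(k-t/\tau)d_\tau v_n^k$ with $|k-t/\tau|\leq1$ and $\tfrac{d\hat v_n^\tau}{dt}(t)=d_\tau v_n^k$, so $\|e_n(\hat v_n^\tau-\overline v_n^\tau)\|_{L^{q'}(I,V_n^*)}\leq\tau\|e_n\tfrac{d\hat v_n^\tau}{dt}\|_{L^{q'}(I,V_n^*)}$ and it suffices to bound the latter. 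Testing against arbitrary $w\in L^q(I,V_n)$, using the scheme slice-wise with $w_n=[w]_k^\tau\in V_n$, and then applying Proposition~\ref{4.6}~(iii.a) reduces matters to
\begin{align*}
\int_I\bigl\langle e_n\tfrac{d\hat v_n^\tau}{dt},w\bigr\rangle_{V_n}\,\mathrm dt=\int_I\langle f_n,\mathscr J_\tau[w]\rangle_{X_n}\,\mathrm dt-\langle\mathcal A_n\overline v_n^\tau,\mathscr J_\tau[w]\rangle_{L^p(I,X_n)\cap_{\smash{j}}L^q(I,Y)}.
\end{align*}
Proposition~\ref{4.4}~(ii), the isometry $\|\cdot\|_{V_n}=\|\cdot\|_{X_n}$ on $V_n$, and $\|j\cdot\|_Y\leq c_Y\|\cdot\|_{X_n}$ give the embedding $L^q(I,V_n)\hookrightarrow L^p(I,X_n)\cap_{\smash{j}}L^q(I,Y)$ with an explicit constant controlled by $c_Y$ and a power of $T$; combined with \eqref{eq:5.6}, this bounds the right-hand side by $C\bigl(\|f_n\|_{L^{p'}(I,X_n^*)}+M\bigr)\|w\|_{L^q(I,V_n)}$. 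Multiplying by $\tau$ and absorbing the constant $C$ into $M$ yields \eqref{eq:5.7}.
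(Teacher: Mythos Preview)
Your proposal is correct and follows essentially the same route as the paper: the energy estimate obtained by testing with $v_n^k$, absorbing the top summand via $\tau<\tau_0$, and discrete Gronwall gives \eqref{eq:5.4}--\eqref{eq:5.5}; (\hyperlink{AN.4}{AN.4}) then gives \eqref{eq:5.6}; and for \eqref{eq:5.7} both you and the paper use the identity $e_n(\hat v_n^\tau-\overline v_n^\tau)(t)=(t-k\tau)\,\tfrac{d_{e_n}\hat v_n^\tau}{dt}(t)$ together with the scheme to reduce to bounding $\mathscr J_\tau[f_n]-\mathscr J_\tau[\mathcal A_n]\overline v_n^\tau$. The only cosmetic difference is that the paper invokes Proposition~\ref{4.6}~(iii.c) to pass from $\mathscr J_\tau[\mathcal A_n]\overline v_n^\tau$ to $\mathcal A_n\overline v_n^\tau$ at the operator level, whereas you unfold the duality via (iii.a) and the embedding $L^q(I,V_n)\hookrightarrow L^p(I,X_n)\cap_{\smash{j}}L^q(I,Y)$; the two arguments are equivalent.
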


\begin{proof}
We employ $w_n=v_n^k\in V_n$,  $k=1,\dots,l$, for arbitrary ${l=1,\dots,K}$~in~\eqref{eq:4.15},~multiply~by~${\tau\in \left(0,\tau_0\right)}$, sum with respect to $k\!=\!1,\dots,l$, 
use \eqref{eq:4.2} and $\sup_{n\in \mathbb{N}}{\|jv^0_n\|_Y}\!\leq\!\|v_0\|_H$, 
to~obtain~for~all~${l\!=\!1,\dots,K}$
\begin{align}\begin{aligned}
\frac{1}{2}\|j v_n^l\|_Y^2
+\sum_{k=1}^l{\tau\,\langle[A_n]^k_\tau v_n^k,v_n^k\rangle_{X_n}}
\leq \frac{1}{2}\|v_0\|_H^2+
\sum_{k=1}^l{\tau\,\langle [f_n]^k_\tau ,v_n^k\rangle_{X_n}}\,.
\end{aligned}
\label{eq:5.8}
\end{align}
Using $\sum_{k=1}^l{\tau\langle[f_n]^k_\tau
,v_n^k\rangle_{X_n}}
=\langle\mathscr{J}_\tau[f_n],\overline{v}_n^\tau\chi_{\left[0,l\tau\right]}\rangle_{L^p(I,X_n)}$,
the $\varepsilon$-Young inequality  
and $\|\mathscr{J}_\tau[f_n]\|_{L^{p'}(I,X_n^*)}
\leq\|f_n\|_{L^{p'}(I,X_n^*)}$ for all $n\in\mathbb{N}$ (cf.~Proposition~\ref{4.4}~(ii)), 
for every $l=1,\dots,K$, we deduce that
\begin{align}
\begin{aligned}
\sum_{k=1}^l{\tau\,\langle[f_n]^k_\tau ,v_n^k\rangle_{X_n}}
\leq c_{\varepsilon}\,\sup_{n\in \mathbb{N}}{\|f_n\|_{L^{p'}(I,X_n^*)}^{p'}}
+\varepsilon\, \int _0^{l\tau}{\|\overline{v}^\tau_n(s)\|_{X_n}^p\,\mathrm{d}s}\,.
\end{aligned}\label{eq:5.9}
\end{align}
In addition, using Proposition \ref{4.6} (i.b), for every $l=1,\dots,K$, we obtain 
\begin{align}
\sum_{k=1}^l{\tau\,\langle[A_n]^k_\tau v_n^k,v_n^k\rangle_{X_n}}\ge
c_0\,\int_0^{l\tau}{\|\overline{v}^\tau_n(s)\|_{X_n}^p\,\mathrm{d}s}
-\tau\, c_1\,\|jv_n^l\|_Y^2-\sum_{k=1}^{l-1}{\tau\, c_1\,\|jv_n^k\|_Y^2}
-c_2\,T\,.\label{eq:5.10}
\end{align}
We set $\varepsilon\hspace{-0.12em}\coloneqq \hspace{-0.12em} \frac{c_0}{2}$, 
$\alpha\hspace{-0.12em}\coloneqq \hspace{-0.12em}\frac{1}{2}\|v_0\|_H^2+c_{\varepsilon}\sup_{n\in \mathbb{N}}{\|f_n\|_{L^{p'}(I,X_n^*)}^{p'}}+c_2T$, 
$\beta\hspace{-0.12em}\coloneqq \hspace{-0.12em}4\tau c_1$ and $y^k_n\coloneqq \frac{1}{4}\|jv^k_n\|_Y^2$~for~all~${k\hspace{-0.12em}=\hspace{-0.12em}1,\dots,K}$. 
Then,  for every $l=1,\dots,K$, from \eqref{eq:5.8}--\eqref{eq:5.10} we infer that
\begin{align}
y^l_n+\frac{c_0}{2}\int_0^{l\tau}{\|\overline{v}^\tau_n(s)\|_{X_n}^p\,\mathrm{d}s}
\leq \alpha+\beta\,\sum_{k=1}^{l-1}{y^k_n}\,.\label{eq:5.11}
\end{align}
The discrete Gronwall inequality, cf. \cite[Lemma 2.2]{Ba15}, applied to \eqref{eq:5.11} yields
\begin{align}
\frac{1}{4}\|j\overline{v}_n^\tau\|_{L^\infty(I,Y)}^2
+\frac{c_0}{2}\|\overline{v}_n^\tau\|_{L^p(I,X_n)}^p\leq \alpha\exp(K\beta)
=\alpha\,\exp(4\,T\,c_1)=\vcentcolon C_0\,,\label{eq:5.12}
\end{align}
i.e., \eqref{eq:5.4}. From \eqref{eq:5.4} and  (\hyperlink{AN.4}{AN.4}), 
we obtain a constant ${C_1>0}$, such that for every ${n\in \mathbb{N}}$~and~${\tau\in \left(0,\tau_0\right)}$, it holds $$\|\mathcal{A}_n\overline{v}_n^\tau\|_{(L^p(I,X_n)\cap_{\smash{j}}L^q(I,Y))^*}\hspace{-0.1em}\leq\hspace{-0.1em} C_1\,,$$
i.e.,  \eqref{eq:5.6}.  In addition, from~\eqref{eq:5.12}, for every $n\in \mathbb{N}$ and $\tau\in \left(0,\tau_0\right)$, we~deduce~that 
$$\|j\hat{v}_n^\tau\|_{L^\infty(I,Y)}^2\leq \|j\overline{v}_n^\tau\|_{L^\infty(I,Y)}^2\leq 4 C_0\,,$$ 
i.e.,  \eqref{eq:5.5}.
Since $\smash{e_n
(\hat{v}_n^\tau(t)-\overline{v}_n^\tau(t))
=(t-k\tau) d_\tau e_n \hat{v}_n^\tau(t)=(t-k\tau)\frac{d_{e_n}\hat{v}_n^\tau}{dt}(t)}$~in~$V_n^*$ and $\vert t-k\tau\vert \leq \tau$ for all $t\in I_k^\tau$, ${k=1,\dots,K}$, 
$K,n\in\mathbb{N}$, for every $K,n\in\mathbb{N}$ with $\tau\in (0,\tau_0)$, it holds
\begin{align*}
\begin{aligned}
\big\|e_n\big(\hat{v}_n^\tau-\overline{v}_n^\tau\big)\big\|_{L^{q'}(I,V_n^*)}&
\leq\tau\left\|\frac{d_{e_n}\hat{v}_n^\tau}{dt}\right\|_{L^{q'}(I,V_n^*)}
\\&= \tau\big\|(\textup{id}_{L^q(I,V_n)})^*\big(\mathscr{J}_\tau[f_n]
-\mathscr{J}_\tau[\mathcal{A}_n]\overline{v}_n^\tau\big)\big\|_{L^{q'}(I,V_n^*)}
\\&\leq \tau\,\big(\sup_{n\in\mathbb{N}}{\|f\|_{L^{p'}(I,X_n^*)}}+C_1\big)\,,
\end{aligned}
\end{align*}
i.e.,\eqref{eq:5.7}, where we used Proposition \ref{4.4} 
and Proposition \ref{4.6} (iii.c).
\end{proof}
\enlargethispage{3mm}

We can now prove the abstract convergence result, which is the main result of this paper.

\begin{theorem}[Weak convergence of \eqref{eq:4.15}]\label{5.17}
Let Assumption \ref{asum} be satisfied~and set ${\tau_0\coloneqq \frac{1}{4c_1}}$.
If $\overline{v}_n\coloneqq \overline{v}^{\tau_n}_{m_n}\in L^\infty(I,V_{m_n})$, $n\in \mathbb{N}$, 
where $\smash{\tau_n=\frac{T}{K_n}}$ and $K_n,m_n\to\infty$  $(n\to\infty)$, 
is a diagonal sequence of the  piece-wise constant interpolants 
$\overline{v}_n^\tau\in \mathcal{P}_0(\mathcal{I}_\tau,V_n)$, ${K,n\in \mathbb{N}}$~with~$\smash{\tau=\frac{T}{K}\in (0,\tau_0)}$, from Proposition~\ref{apriori}, then there 
exists a not relabeled subsequence and an element 
${\overline{v}\in L^p(I,V)\cap_{\smash{j}} L^\infty(I,H)}$ such that
\begin{align*}
\sup_{n\in\mathbb{N}}{\|\overline{v}_n\|_{L^p(I,X_{m_n})}}<\infty\,,\qquad
j\overline{v}_n\;\;\overset{\ast}{\rightharpoondown}\;\;\;j\overline{v}
\quad\text{ in }L^\infty(I,Y)\quad(n\to\infty)\,.
\end{align*}
Furthermore, it follows that $\overline{v}\hspace*{-0.1em}\in\hspace*{-0.1em} {W}^{1,p,q}_e(I,V,H)$
is a weak solution of the initial value problem~\eqref{eq:1.1}.
\end{theorem}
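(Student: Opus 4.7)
\smallskip

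\textbf{Proof proposal.} The natural strategy is to first extract a diagonal subsequence from the a priori estimates and then identify the limit as a weak solution by passing to the limit in the discrete equation~\eqref{eq:4.15}, using the non-conforming Bochner pseudo-monotonicity provided by Theorem~\ref{3.9}. First, I would combine the stability bound \eqref{eq:5.4} with the property (\hyperlink{NC.2}{NC.2}) of Bochner non-conforming approximations to extract a (not relabeled) subsequence and a limit $\overline{v}\in L^p(I,V)\cap_{\smash{j}} L^\infty(I,H)$ with $j\overline{v}_n \overset{\ast}{\rightharpoondown} j\overline{v}$ in $L^\infty(I,Y)$. The bound \eqref{eq:5.6} gives (after possibly extracting a further subsequence) a weak limit $\overline{\xi}\in(L^p(I,V)\cap_{\smash{j}}L^q(I,H))^*$ of $\mathcal A_{m_n}\overline{v}_n$, and Proposition~\ref{4.6}~(iii.b) then also gives weak convergence of $\mathscr{J}_{\tau_n}[\mathcal A_{m_n}]\overline{v}_n$ to the same limit. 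Using \eqref{eq:5.7}, the piece-wise affine interpolants $\hat{v}_n$ share the same weak-$\ast$ limit $j\overline{v}$ in $L^\infty(I,Y)$ with the piece-wise constants $\overline{v}_n$, and from the discrete equation \eqref{eq:4.15} we can read off that $\frac{d_{e_n}\hat{v}_n}{dt}=(\mathrm{id}_{V_n})^*(\mathscr{J}_{\tau_n}[f_{m_n}]-\mathscr{J}_{\tau_n}[\mathcal A_{m_n}]\overline{v}_n)$ is bounded; combined with the weak-$\ast$ convergence of $j\hat v_n$ this identifies, via the standard duality argument on the intersection space and assumption~(\hyperlink{BN.2}{BN.2}), the generalized time derivative $\frac{d_e\overline{v}}{dt}=f-\overline{\xi}\in L^{p'}(I,V^*)+j^*L^{q'}(I,H^*)$, so that $\overline{v}\in W^{1,p,q}_e(I,V,H)$.

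Next, I would verify the initial condition: testing \eqref{eq:4.15} against a smooth compactly supported time weight and passing to the limit (using Assumption~\ref{asum}(ii) that $v_{m_n}^0\to v_0$ in $Y$ with the bound $\|jv_{m_n}^0\|_Y\leq \|v_0\|_H$) yields $(j_c\overline{v})(0)=v_0$ in $H$ via the integration-by-parts formula from Proposition~\ref{2.16}(iii). The remaining and central task is to identify $\overline{\xi}=\mathcal{A}\overline{v}$. For this I would verify the hypotheses of non-conforming Bochner pseudo-monotonicity of Definition~\ref{3.4}: \eqref{eq:3.6} is already proved; \eqref{eq:3.7} follows from Proposition~\ref{3.2}(ii) applied pointwise in $t$, combined with a standard Helly-type argument exploiting the uniform bound on $\frac{d_{e_n}\hat{v}_n}{dt}$ (or, more directly, by extracting pointwise weak convergence of $P_H(j\hat v_n)(t)$ on a dense subset of $I$ and then on all of $I$ by continuity of the $H$-representative).

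The critical step is to show the limsup condition~\eqref{eq:3.8}, i.e.\ $\limsup_{n\to\infty}\langle\mathcal{A}_{m_n}\overline{v}_n,\overline{v}_n-\overline{v}\rangle\leq 0$. Here I would test the discrete equation \eqref{eq:4.15} by $\overline{v}_n$ itself (equivalently, by $v_{m_n}^k$ and sum over $k$), use the discrete integration-by-parts formula \eqref{eq:4.2} to obtain
\begin{align*}
\langle\mathcal{A}_{m_n}\overline{v}_n,\overline{v}_n\rangle_{L^p(I,X_{m_n})\cap_{\smash{j}}L^q(I,Y)}
\leq \tfrac12\|v_0\|_H^2-\tfrac12\|j\hat{v}_n(T)\|_Y^2+\langle\mathscr{J}_{\tau_n}[f_{m_n}],\overline{v}_n\rangle\,,
\end{align*}
and then exploit weak lower semicontinuity of $\|\cdot\|_Y^2$ (for $\|j\hat{v}_n(T)\|_Y$, using also that $j_c\overline{v}(T)$ is the weak limit in $H$) together with (\hyperlink{BN.2}{BN.2}) on the right-hand side to obtain
\begin{align*}
\limsup_{n\to\infty}\langle\mathcal{A}_{m_n}\overline{v}_n,\overline{v}_n\rangle
\leq \tfrac12\|v_0\|_H^2-\tfrac12\|(j_c\overline v)(T)\|_H^2+\langle f,\overline v\rangle\,.
\end{align*}
Comparing with the continuous integration-by-parts applied to the identified time derivative $\frac{d_e\overline v}{dt}=f-\overline{\xi}$ gives $\limsup_{n}\langle\mathcal{A}_{m_n}\overline{v}_n,\overline{v}_n-\overline v\rangle\leq 0$. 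Then Theorem~\ref{3.9} (non-conforming Bochner pseudo-monotonicity) yields $\langle \mathcal{A}\overline{v},\overline{v}-w\rangle\leq\langle\overline{\xi},\overline{v}-w\rangle$ for all $w\in L^p(I,V)\cap_{\smash{j}}L^q(I,H)$ and, choosing $w=\overline{v}\pm\phi$ with $\phi\in C_0^1(I,V)$, we conclude $\mathcal{A}\overline{v}=\overline{\xi}$, i.e.\ \eqref{eq:2.18} holds, and $\overline{v}$ is a weak solution of~\eqref{eq:1.1}.

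I expect the main obstacle to be the handling of the boundary term $\tfrac12\|j\hat v_n(T)\|_Y^2$: one must carefully pass from the bound at discrete times $k\tau_n$ to the endpoint $T$ and then from the Hilbert-space norm in $Y$ down to the norm in $H$ of the continuous representative $j_c\overline v(T)$. This is exactly where the interplay between the embedding $H\hookrightarrow Y$, the weak continuity of $P_H\colon Y\to H$, and the integration-by-parts formula of Proposition~\ref{2.16}(iii) becomes delicate. All other arguments (the extraction of subsequences, the identification of the time derivative, and the application of Theorem~\ref{3.9}) are largely routine adaptations of the corresponding steps in \cite{alex-rose-nonconform}.
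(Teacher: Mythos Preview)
Your overall strategy matches the paper's proof closely: extract limits via (\hyperlink{NC.2}{NC.2}) and the a priori bounds, identify the generalized time derivative by testing against $w_n\varphi$ with $w_n\to w$ provided by (\hyperlink{NC.1}{NC.1}), verify the initial condition via integration by parts, establish the limsup condition \eqref{eq:3.8} by testing with $\overline{v}_n$ itself and combining \eqref{eq:4.2} with weak lower semicontinuity at $t=T$, and conclude via Theorem~\ref{3.9}. The identification $\mathcal A\overline v=\overline\xi$ and the handling of the endpoint term $\tfrac12\|j\hat v_n(T)\|_Y^2$ are exactly as in the paper.

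The one place where your sketch is too optimistic is the verification of the pointwise condition \eqref{eq:3.7}. Neither of your two suggestions works as stated. Proposition~\ref{3.2}(ii) requires $\sup_n\|\overline v_n(t)\|_{X_{m_n}}<\infty$ for the fixed $t$, which you do \emph{not} have a priori (you only have an $L^p(I)$ bound in that norm). A Helly-type argument is also problematic because $\frac{d_{e_n}\hat v_n}{dt}$ is only bounded in the $n$-dependent space $V_{m_n}^*$, not in any fixed dual space, so you cannot extract pointwise-a.e.\ limits in a single Banach space along one subsequence. The paper circumvents both issues: for each fixed $t\in\overline I$ it uses the uniform $Y$-bound $\|(j\hat v_n)(t)\|_Y\le M$ to extract a weak limit $\hat v_{\Lambda_t}\in Y$ along a $t$-dependent subsequence, then tests \eqref{eq:4.15} against $w_n\varphi\chi_{[0,t]}$ with $\varphi(0)=0$, $\varphi(t)=1$ and passes to the limit (using the already-established identity $\frac{d_e\overline v}{dt}=f-g-j^*h$ and integration by parts) to identify $P_H\hat v_{\Lambda_t}=(j_c\overline v)(t)$. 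Since this identification is independent of the subsequence, the full sequence $P_H(j\hat v_n)(t)\rightharpoonup(j_c\overline v)(t)$ converges, and then \eqref{eq:5.7} transfers this to $P_H(j\overline v_n)(t)$ for a.e.\ $t$. This explicit test-function argument is the missing ingredient in your outline; once you insert it, your proof is complete and coincides with the paper's.
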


\pagebreak
\begin{proof} We align the proof into four steps:

\textit{1. Convergences.} From \eqref{eq:5.4}--\eqref{eq:5.7}, (\hyperlink{NC.2}{NC.2}) 
and the reflexivity of  ${(L^p(I,V)\cap_{\smash{j}} L^q(I,H))^*}$, we obtain not relabeled
subsequences $\overline{v}_n,\hat{v}_n\in 
L^p(I,V_{m_n})\cap_{\smash{j}} L^\infty(I,Y)$, $n\in \mathbb{N}$, where $\hat{v}_n\coloneqq \hat{v}_{m_n}^{\tau_n}$ 
for all $n\in \mathbb{N}$, as well as elements 
$\overline{v}\in L^p(I,V)\cap_{\smash{j}} L^\infty(I,H)$, $\hat{v}\in  L^\infty(I,Y)$ and
${\chi}\in (L^p(I,V)\cap_{\smash{j}} L^q(I,H))^*$ such that
\begin{align}
\begin{alignedat}{3}
j\overline{v}_n&\;\;\overset{\ast}{\rightharpoondown}\;\;j\overline{v}&&\quad\text{
	in }L^\infty(I,Y)&&\quad (n\to\infty)\,,\\[-1mm]
j\hat{v}_n&\;\;\overset{\ast}{\rightharpoondown}\;\;\hat{v}&&\quad\text{
	in }L^\infty(I,Y)&&\quad (n\to\infty)\,,\\
(\mathrm{id}_{L^p(I,V)\cap_{\smash{j}} L^q(I,H)})^*\mathcal{A}_n\overline{v}_n&
\;\;\weakto\;\;{\chi}&&\quad\text{
	in }(L^p(I,V)\cap_{\smash{j}} L^q(I,H))^*&&\quad (n\to\infty)\,.
\end{alignedat}\label{eq:5.18}
\end{align}
In particular, cf. \eqref{eq:dual}, there exist $g\in L^{p'}(I,V^*)$ and $h\in L^{q'}(I,H^*)$, 
such that for every $w\in L^p(I,V)\cap_{\smash{j}} L^q(I,H)$, it~holds 
\begin{align}
\langle{\chi},{w}\rangle_{L^p(I,V)\cap_{\smash{j}} L^q(I,H)}
=\int_I{\langle g(t)+(j^*h)(t),w(t)\rangle_V\,\mathrm{d}t}\,.\label{eq:rep}
\end{align}
Due to $\eqref{eq:5.7}$, there exists a subset $E\subseteq I$, with 
$I\setminus E$ a null set, such that for every $t\in E$, it holds
\begin{align}
\big	\|e_{m_n}\big(\hat{v}_n(t)-\overline{v}_n(t)\big)\big\|_{V_{m_n}^*}\;\;\to\;\; 0\quad(n\to \infty)\,.\label{eq:5.19}
\end{align}
On the basis of (\hyperlink{NC.1}{NC.1}), we can choose for every element $w\in D$, 
a sequence $w_n\in V_{m_n}$, $n\in\mathbb{N}$, such that 
$\|w_n- w\|_{X_{m_n}}\to 0$ $(n\to\infty)$. Then, using \eqref{eq:iden}, 
\eqref{eq:5.4}, \eqref{eq:5.5}, and \eqref{eq:5.19},~for~every~${t\in E}$,~we~infer~that
\begin{align}
\begin{aligned}
\big\vert\big(P_H\big((j\hat{v}_n)(t)-(j\overline{v}_n)(t)\big),jw\big)_H\big\vert
&=\big\vert\big((j\hat{v}_n)(t)-(j\overline{v}_n)(t),jw\big)_Y\big\vert
\\
&\leq \big\vert\big\langle
e_{m_n}\big((j\hat{v}_n)(t)-(j\overline{v}_n)(t)\big),w_n\big\rangle_{V_{m_n}}\big\vert
\\
&\quad+ \big\vert\big((j\hat{v}_n)(t)-(j\overline{v}_n)(t),jw-jw_n\big)_Y\big\vert
\\
&\leq
\big\|e_{m_n}\big(\hat{v}_n(t)-\overline{v}_n(t)\big)\big\|_{V_{m_n}^*}\|w_n\|_{X_{m_n}}
\\
&\quad+ \big\|(j\hat{v}_n)(t)-(j\overline{v}_n)(t)\big\|_Y\|jw-jw_n\|_Y
\\
&\leq
\big\|e_{m_n}\big(\hat{v}_n(t)-\overline{v}_n(t)\big)\big\|_{V_{m_n}^*}\|w_n\|_{X_{m_n}}
\\
&\quad+ 2M\|jw-jw_n\|_{X_{m_n}}\;\;\to\;\; 0\quad(n\to \infty)\,.
\end{aligned}\label{eq:5.20}
\end{align}
Since $D$ is dense in $V$ and $R(j)$ is dense in $H$, for every $t\in E$, from \eqref{eq:5.20} we conclude that
\begin{align}
P_H\big((j\hat{v}_n)(t)-(j\overline{v}_n)(t)\big )\;\;\weakto\;\; 0\quad\text{ in }H\quad(n\to \infty)\,.\label{eq:5.21}
\end{align}
Since $(P_Hj\overline{v}_n)_{n\in\mathbb{N}},(P_Hj\hat{v})_{n\in\mathbb{N}}\subseteq L^\infty(I,H)$ are bounded (cf. \eqref{eq:5.4} and \eqref{eq:5.5}), \cite[Proposition 2.15]{alex-rose-hirano}~yields, owing to \eqref{eq:5.21}, that $P_H(j\overline{v}_n-j\hat{v}_n)\weakto {0}$ in $L^q(I,H)$ $(n\to\infty)$. From $\eqref{eq:5.18}_{1,2}$ we easily deduce that $P_H(j\overline{v}_n-j\hat{v}_n)\weakto P_H(j\overline{v}-\hat{v})$ in $L^q(I,H)$ $(n\to\infty)$. As a consequence,  $P_H\hat{v}=P_Hj\overline{v}=j\overline{v}$ in $L^\infty(I,H)$, where we used that $\overline{v}\in L^p(I,V)\cap_{\smash{j}}L^\infty(I,H)$. \\[-2mm]

\textit{2. Regularity and trace of the weak limit.}
\hypertarget{3.2}{}
Let $w\in D$ and $w_n\in V_{m_n}$, $n\in\mathbb{N}$, be a sequence such that $\smash{\|w_n- w\|_{X_{m_n}}\to 0}$ $(n\to\infty)$. Testing
\eqref{eq:4.15} for $n\in \mathbb{N}$ by
$\smash{w_n\in V_{m_n}}$, multiplication by $\varphi\in C^\infty(\overline{I})$ with $\varphi(T)=0$, integration over $I$, and integration-by-parts,  for every $n\in \mathbb{N}$, yield
\begin{align}\label{eq:5.22}
\begin{aligned}
&\langle  \mathscr{J}_{\tau_n}[\mathcal{A}_{m_n}]\overline{v}_n,
w_n\varphi\rangle_{L^p(I,X_{m_n})\cap_{\smash{j}}L^q(I,Y)}
-\int_{I}{\langle\mathscr{J}_{\tau_n}[f_{m_n}](s),w_n\rangle_{X_{m_n}}\varphi(s)\,\mathrm{d}s} 
\\
&=\int_I{((j\hat{v}_n)(s),jw_n)_Y\varphi^\prime(s)\,\mathrm{d}s}
+(jv_{m_n}^0,jw_n)_Y\varphi(0)\,.
\end{aligned}
\end{align}
Using Proposition \ref{4.4}, Proposition \ref{4.6} and $\eqref{eq:5.18}_3$, we obtain
\begin{align}
&\big\vert\langle \mathscr{J}_{\tau_n}[\mathcal{A}_{m_n}]\overline{v}_n,w_n\varphi\rangle_{L^p(I,X_{m_n})\cap_{\smash{j}}L^q(I,Y)}-\langle{\chi},w\varphi\rangle_{L^p(I,V)\cap_{\smash{j}}L^q(I,H)}\big\vert\label{eq:conv1}
\\
&=\big\vert\langle \mathcal{A}_{m_n}\overline{v}_n,w_n\mathscr{J}_{\tau_n}[\varphi]\rangle_{L^p(I,X_{m_n})\cap_{\smash{j}}L^q(I,Y)}-\langle{\chi},w\varphi\rangle_{L^p(I,V)\cap_{\smash{j}}L^q(I,H)}\big\vert\notag
\\
&\le\big\vert\langle
\mathcal{A}_{m_n}\overline{v}_n,(w_n-w)\mathscr{J}_{\tau_n}[\varphi]\rangle_{L^p(I,X_{m_n})\cap_{\smash{j}}L^q(I,Y)}\big\vert\notag
\\
&\quad+\big\vert\langle \mathcal{A}_{m_n}\overline{v}_n,w\mathscr{J}_{\tau_n}[\varphi]\rangle_{L^p(I,X_{m_n})\cap_{\smash{j}}L^q(I,Y)}-\langle{\chi},w\varphi\rangle_{L^p(I,V)\cap_{\smash{j}}L^q(I,H)}\big\vert\notag
\\
&\leq\|
\mathcal{A}_{m_n}\overline{v}_n\|_{(L^p(I,X_{m_n})\cap_{\smash{j}}L^q(I,Y))^*}\big(\|w_n-w\|_{X_{m_n}}\|\mathscr{J}_{\tau_n}[\varphi]\|_{L^p(I)}+\|w_n-w\|_Y\|\mathscr{J}_{\tau_n}[\varphi]\|_{L^q(I)}\big)\notag
\\
&\quad+\big\vert	\langle(\mathrm{id}_{L^p(I,V)\cap_{\smash{j}} L^q(I,H)})^*\mathcal{A}_{m_n}\overline{v}_n,w\mathscr{J}_{\tau_n}[\varphi]\rangle_{L^p(I,V)\cap_{\smash{j}}L^q(I,H)}-\langle{\chi},w\varphi\rangle_{L^p(I,V)\cap_{\smash{j}}L^q(I,H)}\big\vert\notag
\\
&\to 0\quad(n\to \infty)\,.\notag
\end{align}
Inasmuch as $\sup_{n\in
\mathbb{N}}{\|w_n\mathscr{J}_{\tau_n}[\varphi]\|_{L^p(I,X_{m_n})\cap
L^\infty(I,Y)}}<\infty$ and
$w_n\mathscr{J}_{\tau_n}[\varphi]\overset{\ast}{\rightharpoondown}w\varphi$
in $L^\infty(I,Y)$ $(n\to \infty)$
(cf.~Proposition~\ref{4.4}), using Proposition~\ref{4.6}
(iii.a),  we infer, for $n\to \infty$, that
\begin{align}
\int_I{\langle \mathscr{J}_{\tau_n}[f_{m_n}](s),w_n\varphi(s)\rangle_{X_{m_n}}\mathrm{d}s}=
\int_I{\langle f_{m_n}(s),w_n\mathscr{J}_{\tau_n}[\varphi](s)\rangle_{X_{m_n}}\mathrm{d}s}\to\int_I{\langle f,w\varphi(s)\rangle_V\,\mathrm{d}s}\,.\label{eq:conv2}
\end{align}
By passing in \eqref{eq:5.22} for $n\to \infty$, using \eqref{eq:5.18}, \eqref{eq:conv1}, \eqref{eq:conv2}, \eqref{eq:rep}, $P_H(\hat{v})=j\overline{v}$ in $L^{\infty}(I,H)$,
$v_{m_n}^0\to v_0$ in
$Y$ $(n\to\infty)$, and the density of $D$ in $V$, for every $w\in V$ and
$\varphi\in C^\infty(\overline{I})$ with $\varphi(T)=0$, we obtain 
\begin{align}
\begin{aligned}
\int_{I}{\langle g(s)+(j^*h)(s)-f(s),w\rangle_V\varphi(s)\,\mathrm{d}s}&=\int_I{(\hat{v}(s),jw)_Y\varphi^\prime(s)\,\mathrm{d}s}
+(v_0,jw)_Y\varphi(0)
\\&=\int_I{((j\overline{v})(s),jw)_H\varphi^\prime(s)\,\mathrm{d}s}
+(v_0,jw)_H\varphi(0)\,.
\end{aligned}\label{eq:5.23} 
\end{align}
In the case
$\varphi\in C_0^\infty(I)$ in \eqref{eq:5.23}, recalling Definition \ref{2.15}, we conclude that $\overline{v}\in {W}_e^{1,p,q}(I,V,H)$ with continuous representation $j_c\overline{v}\in C^0(\overline{I},H)$ and
\begin{align}
\frac{d_e\overline{v}}{dt}=f-g-j^*h\quad\text{ in }
L^{p'}(I,V^*)+ j^*(L^{q'}(I,H^*))\,.\label{eq:5.24}
\end{align}
Therefore, we can apply the generalized integration-by-parts formula in ${W}_e^{1,p,q}(I,V,H)$ (cf.~Proposition~\ref{2.16}) in
\eqref{eq:5.22} in the case $\varphi\in C^\infty(\overline{I})$ with
$\varphi(T)=0$ and $\varphi(0)=1$, which yields for all $w\in
V$ 
\begin{align}
((j_c\overline{v})(0)-v_0,jw)_H=0\,. \label{eq:5.25}
\end{align}
since $R(j)$ is dense in $H$ and $(j_c\overline{v})(0)\in H$, we deduce from \eqref{eq:5.24} that $(j_c\overline{v})(0)=v_0$ in $H$. \\[-2mm]

\textit{3. Point-wise weak convergence.}
Next, we show~that~${P_H(j\hat{v}_n)(t)\weakto (j_c
\overline{v})(t)}$~in~$H$~${(n\to\infty)}$~for~all~${t \in \overline{I}}$, which owing to \eqref{eq:5.21}, in turn, yields
that $ P_H(j\overline{v}_n)(t)
\weakto (j_c
\overline{v})(t)$ in $H$ $(n\to\infty)$ for almost~every~${t \in \overline{I}}$. To this end, let us fix an arbitrary
$t\hspace{-0.15em}\in\hspace{-0.15em} I$. From the a priori estimate
${\sup_{n\in \mathbb{N}}{\|(j\hat{v}_n)(t)\|_{Y}}\hspace{-0.15em}\leq\hspace{-0.15em} M}$~for~all~${t\hspace{-0.15em}\in\hspace{-0.15em} \overline{I}}$ (cf.~\eqref{eq:5.5}) we
obtain a subsequence
$((j\hat{v}_n)(t))_{n\in\Lambda_t}\subseteq
Y$ with $\Lambda_t\subseteq\mathbb{N}$, initially
depending on~this~fixed~$t$,~and an element ${\hat{v}_{\Lambda_t}\in Y}$ such that
\begin{align}
(j\hat{v}_n)(t)
\;\;\weakto\;\;
\hat{v}_{\Lambda_t}\quad\text{ in }Y\quad(\Lambda_t\ni n\to
\infty)\,.\label{eq:5.27}  
\end{align}
Let $w\in D$ be arbitrary and let $w_n\in V_{m_n}$, $n\in \mathbb{N}$, be a sequence~such~that~${\|w_n- w\|_{X_{m_n}}\to 0}$~${(n\to\infty)}$. We test \eqref{eq:4.15} for any $n\in \Lambda_t$ by
$w_n\in V_{m_n}$, multiply by $\varphi\in C^\infty(\overline{I})$ with $\varphi(0)=0$ and
$\varphi(t)=1$, integrate with respect to $t$ over $\left[0,t\right]$, and integrate-by-parts, to obtain for all $n\in \Lambda_t$ 
\begin{align}
\begin{aligned}
&\langle
\mathscr{J}_{\tau_n}[\mathcal{A}_{m_n}]\overline{v}_n,w_n\varphi\chi_{\left[0,t\right]}\rangle_{L^p(I,X_{m_n})\cap_{\smash{j}}L^q(I,Y)}-\int_0^t{\langle\mathscr{J}_{\tau_n}[f_{m_n}](s),w_n\rangle_{X_{m_n}}\varphi(s)\,\mathrm{d}s}\label{eq:5.28}
\\&=\int_0^t{((j\hat{v}_n)(s),jw_n)_Y\varphi^\prime(s)\,\mathrm{d}s}
-((j\hat{v}_n)(t),jw_n)_Y\,.
\end{aligned}
\end{align}
By passing in \eqref{eq:5.28} for  $n\in \Lambda_t$ to
infinity, using the same argumentation as for \eqref{eq:conv1}
and \eqref{eq:conv2} as~well~as \eqref{eq:5.18},
\eqref{eq:5.27}, $P_H(\hat{v})=j\overline{v}$ in $L^{\infty}(I,H)$, and the density of $D$ in $V$,  for every $w\in V$, we obtain
\begin{align}
\begin{aligned}	\int_0^t{\langle g(s)+(j^*h)(s)-f(s),w\rangle_V\varphi(s)\,\mathrm{d}s}
=\int_0^t{((j\overline{v})(s),jw)_H\varphi^\prime(s)\,\mathrm{d}s}
-(\hat{v}_{\Lambda_t},jw)_Y\,.
\end{aligned} \label{eq:5.29b}
\end{align}
From \eqref{eq:5.24},
\eqref{eq:5.29b} and the integration-by-parts formula in ${W}_e^{1,p,q}(I,V,H)$ (cf. Proposition \ref{2.16}) we also
obtain
\begin{align}
((j_c\overline{v})(t)-
P_H\hat{v}_{\Lambda_t},jw)_H=((j_c\overline{v})(t)-
\hat{v}_{\Lambda_t},jw)_Y=0\label{eq:5.29} 
\end{align}
for every $w\in V$. Thanks to the density of $R(j)$ in $H$, \eqref{eq:5.29} yields
$(j_c\overline{v})(t)=P_H\hat{v}_{\Lambda_t}$
in $H$, i.e.,
\begin{align}
P_H(j\hat{v}_n)(t)\;\;\weakto\;\;(j_c\overline{v})(t)\quad\text{
in }H\quad(\Lambda_t\ni n \to\infty)\,.\label{eq:5.30} 
\end{align}
As this argumentation stays valid for each subsequence of
$(P_H(j\hat{v}_n)(t) )_{n\in\mathbb{N}}\subseteq H$,
${(j_c\overline{v})(t)\in H}$ is a weak accumulation point of
each subsequence of
$(P_H(j\hat{v}_n)(t) )_{n\in\mathbb{N}}\subseteq
H$. The standard convergence principle (cf.~\cite[Kap. I, Lem. 5.4]{GGZ}) yields that
$\Lambda_t=\mathbb{N}$ in \eqref{eq:5.30}. Therefore, using \eqref{eq:5.21} and that $(j_c\overline{v})(t)=(j\overline{v})(t)$ in $H$ for almost every $t\in I$, for almost every $t\in I$, we obtain
\begin{align}\label{eq:5.31}
P_H(j\overline{v}_n)(t)\;\;\weakto\;\;(j\overline{v})(t)\quad\text{
in }H\quad(n\to \infty)\,.
\end{align}

\textit{4. Identification of $\mathcal{A}\overline{v}$ and ${\chi}$.}
Inequality \eqref{eq:5.8} in the case $\tau=\tau_n$, $n=m_n$ and $l=K_n$, resorting to Proposition \ref{4.6} (iii.a),  $(j_c\overline{v})(0)=v_0$ in $H$, 
$\|P_H(j\hat{v}_n)(T)\|_H\leq
\|(j\hat{v}_n)(T)\|_Y=\|(j\overline{v}_n)(T)\|_Y$ and
$\int_I{\langle\mathscr{J}_{\tau_n}[f_{m_n}](s),\overline{v}_n(s)\rangle_{X_{m_n}}\mathrm{d}s}=\int_I{\langle 
f_{m_n}(s),\overline{v}_n(s)\rangle_{X_{m_n}}\mathrm{d}s}$, yields for all $n\in \mathbb{N}$
\begin{align}
\begin{aligned}
\langle
\mathcal{A}_{m_n}\overline{v}_n,\overline{v}_n\rangle_{L^p(I,X_{m_n})\cap_{\smash{j}}L^q(I,Y)}&\leq 
-\frac{1}{2}\|P_H(j\hat{v}_n)(T)\|_H^2+\frac{1}{2}\|(j_c\overline{v})(0)\|_H^2\\&\quad+\int_I{\langle 
	f_{m_n}(s),\overline{v}_n(s)\rangle_{X_{m_n}}\mathrm{d}s}\,.
\end{aligned}\label{eq:5.32}
\end{align}
Thus, the limit superior with respect to $n\to\infty$ on both sides in \eqref{eq:5.32},
\eqref{eq:5.18}, \eqref{eq:5.30} with
$\Lambda_t=\mathbb{N}$ in the case $t=T$, the weak lower
semi-continuity of $\|\cdot\|_H$, the integration-by-parts
formula in ${W}_e^{1,p,q}(I,V,H)$ (cf. Proposition \ref{2.16}), and \eqref{eq:5.24} yield that
\begin{align}\label{eq:5.33}
\begin{aligned}
&\limsup_{n\to\infty}{ \langle
	\mathcal{A}_{m_n}\overline{v}_n,\overline{v}_n-\overline{v}\rangle_{L^p(I,X_{m_n})\cap_{\smash{j}}L^q(I,Y)}}
\\
&\leq
-\frac{1}{2}\|(j_c\overline{v})(T)\|_H^2+\frac{1}{2}\|(j_c\overline{v})(0)\|_H^2+
\int_I{\langle
	f(s)-g(s)-(j^*h)(s),\overline{v}(s)\rangle_V\,\mathrm{d}s}
\\&=-\int_I{\left\langle
	\frac{d_e\overline{v}}{dt}(s)+f(s)-g(s)-(j^*h)(s),\overline{v}(s)\right\rangle_V\,\mathrm{d}s}=0\,.
\end{aligned}
\end{align}
As a result of \eqref{eq:5.24}, \eqref{eq:5.31},
\eqref{eq:5.33} and the non-conforming Bochner pseudo-monotonicity of the sequence
$\mathcal{A}_n\colon L^p(I,X_n)\cap_{\smash{j}}L^q(I,Y)\!\to\!
({L^p(I,X_n)\cap_{\smash{j}}L^q(I,Y)})^*$, $n\!\in\! \mathbb{N}$, (cf.~Proposition~\ref{3.9}) with respect~to~$(V_n)_{n\in \mathbb{N}}$ and $\mathcal{A}\colon L^p(I,V)\cap_{\smash{j}}L^q(I,H)\to
({L^p(I,V)\cap_{\smash{j}}L^q(I,H)})^*$,  for every $w\in {L^p(I,V)\cap_{\smash{j}}L^q(I,H)}$,~it~holds
\begin{align*}
\langle \mathcal{A}\overline{v},\overline{v}-w\rangle_{L^p(I,V)\cap_{\smash{j}}L^q(I,H)}&\leq\liminf_{n\to\infty}{\langle \mathcal{A}_{m_n}\overline{v}_n,\overline{v}_n-w\rangle_{{L^p(I,X_{m_n})\cap_{\smash{j}}L^q(I,Y)}}}\\&\leq\langle{\chi},\overline{v}-w\rangle_{{L^p(I,V)\cap_{\smash{j}}L^q(I,H)}}\,,
\end{align*}
which implies that $\mathcal{A}\overline{v}={\chi}$ in $({L^p(I,V)\cap_{\smash{j}}L^q(I,H)})^*$.~
\end{proof}

\section{Application}\label{sec:7}
We now apply the abstract theory developed in Section
\ref{sec:3} to our motivating example, i.e., $p$-Navier--Stokes
equations \eqref{eq:p-NS2}.  To do so, we introduce some more
notation. We employ the same one~as~in~\cite{kr-pnse-ldg-1},~\cite{kr-pnse-ldg-2},~and~\cite{kr-pnse-ldg-3}.
\subsection{Function spaces}

We write $f\sim g$ if and only if there~exist~constants $c,C>0$ such~that~${c\, f \le g\le C\, f}$.

For $\ell\in \setN$ and $p\in [1,\infty]$, we employ the customary
Lebesgue spaces $(L^p(\Omega), \smash{\norm{\cdot}_p}) $ and Sobolev
spaces $(W^{\ell,p}(\Omega), \smash{\norm{\cdot}_{k,p}})$, where $\Omega
\subseteq \setR^d$, $d\in \{2,3\}$, is a bounded,~polyhedral Lipschitz domain. The space $\smash{W^{1,p}_0(\Omega)}$
is defined as those functions from $W^{1,p}(\Omega)$ whose trace vanishes on $\partial\Omega$. We equip
$\smash{W^{1,p}_0(\Omega)}$~with~the~norm $\smash{\norm{\nabla\,\cdot\,}_p}$. 

With a few exceptions, we do not distinguish between function spaces for scalar,
vector-~or~{tensor-valued} functions. However, we denote
vector-valued functions by boldface letters~and~tensor-valued
functions by capital boldface letters. The standard scalar product
between~two vectors is denoted by $\bfa \cdot\bfb$, while the
Frobenius scalar product between~two~tensors is denoted by
$\bfA: \bfB$.  The mean value of a locally integrable function $f$
over a measurable set $M\subseteq \Omega$ is denoted by
${\mean{f}_M\coloneqq \smash{\dashint_M f
\,\textup{d}x}\coloneqq \smash{\frac 1 {|M|}\int_M f
\,\textup{d}x}}$. Furthermore, we employ the notation
$\hskp{f}{g}\coloneqq \int_\Omega f g\,\textup{d}x$, whenever the
right-hand~side~is~\mbox{well-defined}.

We will also use Orlicz and Sobolev--Orlicz spaces
(cf.~\cite{ren-rao}).~A~real~convex~\mbox{function}
$\psi \colon \mathbb{R}^{\geq 0} \to \mathbb{R}^{\geq 0}$ is said to be
an \textit{N-function}, if it holds $\psi(0)=0$,
$\psi(t)>0$~for~all~${t>0}$, $\lim_{t\rightarrow0} \psi(t)/t=0$, and
$\lim_{t\rightarrow\infty} \psi(t)/t=\infty$.
We~define the \textit{(convex) conjugate N-function} $\psi^*$ by
${\psi^*(t)\coloneqq \sup_{s \geq
0} (st -\psi(s))}$~for all
${t \geq 0}$, which satisfies
$(\psi^*)' =  (\psi')^{-1}$. A given N-function
$\psi$~satisfies~the~\textit{\mbox{$\Delta_2$-condition}} (in short,
$\psi \in\Delta_2$), if there exists
$K> 2$ such that ${\psi(2\,t) \leq K\,
\psi(t)}$ for all
$t \ge
0$. We denote the smallest such constant by
$\Delta_2(\psi)>0$. We need the
following version of the \textit{$\varepsilon$-Young inequality}: for every
${\varepsilon> 0}$,~there~exists a constant $c_\epsilon>0 $,
depending  on $\Delta_2(\psi),\Delta_2( \psi ^*)<\infty$, such
that~for~every~${s,t\geq 0}$,~it~holds
\begin{align}
\label{ineq:young}
t\,s&\leq \epsilon \, \psi(t)+ c_\epsilon \,\psi^*(s)\,.
\end{align}

For $p\in (1,\infty)$,
we frequently use the following function spaces
\begin{align*}
\begin{aligned}
\Vo&\coloneqq W^{1,p}_0(\Omega)^d\,,\quad&
\Vo(0)&\coloneqq \{\bfz\in \Vo\mid \divo \bfz=0\}\,,
\\
\Ho(0)&\coloneqq \overline {\mathaccent23 V(0)}^{\norm{\cdot}_2}\,,\quad&
\Qo&\coloneqq L_0^{p'}(\Omega)\coloneqq \big\{f\in
L^{p'}(\Omega)\;|\;\mean{f}_\Omega=0\big\}\,.
\end{aligned}
\end{align*}

\subsection{Basic properties of the extra stress tensor}

We assume that the extra stress tensor $\SSS$ has $(p,\delta)$-structure, which will be~defined~now. A detailed discussion and thorough proofs can be found in \cite{die-ett,dr-nafsa}. For a tensor $\bfA\in \mathbb{R}^{d\times d}$, we denote its symmetric part by ${\bfA^{\textup{sym}}\coloneqq \frac{1}{2}(\bfA+\bfA^\top)\in \mathbb{R}^{d\times d}_{\textup{sym}}\coloneqq \{\bfA\in \mathbb{R}^{d\times d}\mid \bfA=\bfA^\top\}}$.

For $p \in (1,\infty)$ and~$\delta\ge 0$, we define a special N-function
$\phi=\phi_{p,\delta}\colon \smash{\mathbb{R}^{\ge 0}\to \mathbb{R}^{\ge 0}}$~via
\begin{align} 
\label{eq:5} 
\varphi(t)\coloneqq  \int _0^t \varphi'(s)\, \mathrm ds\,,\quad\text{where}\quad
\varphi'(t) \coloneqq  (\delta +t)^{p-2} t\,,\quad\textup{ for all }t\ge 0\,.
\end{align}
It is well-known that $\phi$ is balanced
(cf.~\cite{dr-nafsa,br-multiple-approx}), since $ {\min\set{1,p-1}\,( \delta+t)^{p-2} \le \varphi''(t)}$ $\leq
\max\set{1,p-1}$ $( \delta+t)^{p-2}$ is valid for~all~${t,\delta\ge 0}$. Moreover, $\phi$ satisfies, independent~of~${\delta\ge  0}$, the
$\Delta_2$-condition~with ${\Delta_2(\phi) \leq c\, 2^{\max \set{2,p}}}$. 
The conjugate function $\phi^*$ satisfies, independent of ${\delta\ge  0}$, the
$\Delta_2$-condition with $\Delta_2(\phi^*) \leq c\,2^{\max \set{2,p'}}$~and ${\phi^*(t) \sim
(\delta^{p-1} + t)^{p'-2} t^2}$ uniformly in $t\ge 0$.

An important tool in our analysis are shifted N-functions
$\set{\psi_a}_{\smash{a \ge 0}}$  (cf.~\cite{DK08,dr-nafsa}). For a given N-function $\psi\colon \mathbb{R}^{\ge 0}\to \mathbb{R}^{\ge
0}$, we define the family  of shifted N-functions ${\psi_a\colon \mathbb{R}^{\ge
0}\to \mathbb{R}^{\ge 0}}$,~${a \ge 0}$, for every $a\ge 0$, via
\begin{align}
\label{eq:phi_shifted}
\psi_a(t)\coloneqq  \int _0^t \psi_a'(s)\, \mathrm ds\,,\quad\text{where }\quad
\psi'_a(t)\coloneqq \psi'(a+t)\frac {t}{a+t}\,,\quad\textup{ for all }t\ge 0\,.
\end{align}

\begin{remark} \label{rem:phi_a}
{\rm 
For the above defined N-function $\varphi $, uniformly in $a,t\ge 0$, it holds
${\phi_a(t) \sim (\delta+a+t)^{p-2} t^2}$ and $(\phi_a)^*(t)
\sim ((\delta+a)^{p-1} + t)^{p'-2} t^2$.  The families
$\set{\phi_a}_{\smash{a \ge 0}}$~and $\set{(\phi_a)^*}_{\smash{a \ge 0}}$ satisfy, uniformly in $a \ge 0$,
the $\Delta_2$-condition~with~$\Delta_2(\phi_a) \leq c\, 2^{\smash{\max \set{2,p}}}$ and
$\Delta_2((\phi_a)^*) \leq c\, 2^{\smash{\max \set{2,p'}}}$,
respectively. Moreover, note that $(\phi_{p,\delta})_a(t)=\phi_{p,
\delta+a}(t)$ for all $t,a,\delta\ge 0$, and that the function
$(a \mapsto \phi_a(t))\colon \mathbb{R}^{\ge 0}\to\mathbb{R}^{\ge 0} $,~for~every~${t\ge 0}$, is~non-decreasing for $p\ge 2$ and non-increasing for $p\le 2$.
}
\end{remark}

\begin{asum}[Extra stress tensor]\label{assum:extra_stress}
We assume that the extra stress tensor $ \SSS\colon \mathbb{R}^{d \times d}
\to \mathbb{R}^{d \times d}_{\textup{sym}} $ belongs to $C^0(\mathbb{R}^{d \times
d};\mathbb{R}^{d \times d}_{\textup{sym}} ) $, satisfies $\SSS (\bfA) = \SSS 
(\bfA^{\textup{sym}})$ for all $ \bfA\in \mathbb{R}^{d \times d}$, and $\SSS (\mathbf 0)=\mathbf 0$.~Moreover,~we~assume~that the tensor $\SSS$ has {\rm $(p, \delta)$-structure}, i.e.,
for some $p \hspace{-0.1em}\in \hspace{-0.1em}(1, \infty)$, $ \delta\hspace{-0.1em}\in\hspace{-0.1em} [0,\infty)$, and the
N-function~${\varphi\hspace{-0.1em}=\hspace{-0.1em}\varphi_{p,\delta}}$~(cf.~\eqref{eq:5}), there
exist constants $C_0, C_1 >0$ such that
\begin{subequations}
\label{eq:ass_S}
\begin{align}
({\SSS}(\bfA) - {\SSS}(\bfB)) : (\bfA-\bfB
) &\ge C_0 \,\phi_{\abs{\bfA^{\textup{sym}}}}(\abs{\bfA^{\textup{sym}} -
	\bfB^{\textup{sym}}}) \,,\label{1.4b}
\\
\abs{\SSS(\bfA) - \SSS(\bfB)} &\le C_1 \,
\phi'_{\abs{\bfA^{\textup{sym}}}}(\abs{\bfA^{\textup{sym}} -
	\bfB^{\textup{sym}}})\label{1.5b}
\end{align}
\end{subequations}
are satisfied for all $\bfA,\bfB \hspace{-0.1em}\in\hspace{-0.1em} \mathbb{R}^{d \times d} $.  The
constants $C_0,C_1\hspace{-0.1em}>\hspace{-0.1em}0$ and $p\hspace{-0.1em}\in\hspace{-0.1em} (1,\infty)$ are called the 
characteristics~of~$\SSS$.
\end{asum}

\begin{remark}
{\rm Let $\phi$ be defined in \eqref{eq:5} and 
$\{\phi_a\}_{a\ge 0}$ the corresponding family of the~shifted~\mbox{N-functions}. Then, the operators 
$\SSS_a\colon\mathbb{R}^{d\times d}\to \smash{\mathbb{R}_{\textup{sym}}^{d\times
	d}}$, $a \ge 0$, for every $a \ge 0$
and~$\bfA \in \mathbb{R}^{d\times d}$, defined via
\begin{align}
\label{eq:flux}
\SSS_a(\bfA) \coloneqq 
\frac{\phi_a'(\abs{\bfA^{\textup{sym}}})}{\abs{\bfA^{\textup{sym}}}}\,
\bfA^{\textup{sym}}\,, 
\end{align}
have $(p, \delta +a)$-structure (cf.~Remark~\ref{rem:phi_a}).  In this case, the characteristics of
$\SSS_a$ depend only on $p\in (1,\infty)$ and are independent of
$\delta \geq 0$ and $a\ge 0$.
}
\end{remark}

\pagebreak
\subsection{DG spaces, jumps and averages}\label{sec:dg-space}

\subsubsection{Triangulations}\label{sec:triang}

Throughout the remaining article, we will always denote by $\mathcal{T}_h$, $h>0$, a family of regular, i.e., uniformly shape regular and conforming, triangulations of $\Omega\subseteq  \mathbb{R}^d$, ${d\in \set{2,3}}$, cf.~\cite{ciarlet,Bs08}, each consisting of \mbox{$d$-dimensional} simplices~$K$. Here,
$h>0$ refers to the maximal mesh-size~of~$\mathcal{T}_h$,~i.e., if we~define~${h_K\coloneqq \textup{diam}(K)}$~for~all~${K\in \mathcal{T}_h}$, then~${h\coloneqq \max_{K\in \mathcal{T}_h}{h_K}}$.
For simplicity, we assume~that~${h \le 1}$. For~every~simplex~${K \in \mathcal{T}_h}$,~we~denote~by $\rho_K>0$, the supremum of diameters~of~all~inscribed~balls.  We assume that there is a constant $\omega_0>0$, which is independent of $h> 0$, such that ${h_K}{\rho_K^{-1}}\leq 
\omega_0$~for~all~${K \in \mathcal{T}_h}$. The smallest such constant~is~called~the~chunkiness~of~$(\mathcal{T}_h)_{h>0}$. 
By $\Gamma_h^{i}$, we denote the interior faces~and put $\Gamma_h\coloneqq  \Gamma_h^{i}\cup \partial\Omega$.
We assume that each simplex ${K \in \mathcal{T}_h}$ has at most one face from the boundary~$\partial\Omega$.  We introduce the following scalar products~on~$\Gamma_h$:
\begin{align*}
\skp{f}{g}_{\Gamma_h} \coloneqq  \smash{\sum_{\gamma \in \Gamma_h} {\langle f, g\rangle_\gamma}}\,,\quad\text{ where }\quad\langle f, g\rangle_\gamma\coloneqq \int_\gamma f g \,\mathrm{d}s\quad\text{ for all }\gamma\in \Gamma_h\,,
\end{align*}
whenever all integrals are well-defined. Analogously, we define the products 
$\skp{\cdot}{\cdot}_{\partial\Omega}$ and~$\skp{\cdot}{\cdot}_{\Gamma_h^{i}}$. 

We  need the following construction: Let $\widehat{\Omega} \supsetneq \Omega$ be a polyhedral, bounded
Lipschitz domain. Let
$\widehat{\mathcal{T}}_h$ denote an extension of the
triangulation~$\mathcal{T}_h$ to $\widehat{\Omega}$, having~the~same~properties~as~$\mathcal{T}_h$ (in particular, with a similar chunkiness).  We
extend our notation to this setting by adding a superposed `hat'~to~it.~In~particular, we denote by $\widehat{\Gamma}_h$ and $\widehat{\Gamma}_h^{i}$, 
the faces and interior faces, respectively,~of~$\widehat{\mathcal{T}}_h$. 

\subsubsection{Broken function spaces and projectors}

For every $m \in\mathbb{N}_0$ and $K\in \mathcal{T}_h$,
we denote by ${\mathcal P}_m(K)$, the space of
polynomials of degree at most $m$ on $K$. Then, for
given~$p\in (1,\infty)$ and $\ell \in \mathbb{N}_0$,~we~define~the~spaces
\begin{align}
\begin{aligned}
Q_h^\ell&\coloneqq \big\{ q_h\in L^1(\Omega)\fdg  q_h|_K\in \mathcal{P}_\ell(K)\text{ for all }K\in \mathcal{T}_h\big\}\,,\\
V_h^\ell&\coloneqq \big\{\bfz_h\in L^1(\Omega)^d\fdg \bfz_h|_K\in \mathcal{P}_\ell(K)^d\text{ for all }K\in \mathcal{T}_h\big\}\,,\\
X_h^\ell&\coloneqq \big\{\bfX_h\in L^1(\Omega)^{d\times d}\fdg \bfX_h|_K\in \mathcal{P}_\ell(K)^{d\times d}\text{ for all }K\in \mathcal{T}_h\big\}\,,\\
W^{1,p}(\mathcal T_h)&\coloneqq \big\{\bfw_h\in L^1(\Omega)^d\fdg \bfw_h|_K\in W^{1,p}(K)^d\text{ for all }K\in \mathcal{T}_h\big\}\,.
\end{aligned}\label{eq:2.19}
\end{align}
Apart from that, for given $\ell \in \mathbb{N}_0$, we define $\Qhkc\coloneqq  Q_h^\ell\cap C^0(\overline{\Omega})$ and $V_{h,c}^\ell\coloneqq  V_h^\ell\cap C^0(\overline{\Omega})$. 
We~denote~by $\PiDG\colon L^1(\Omega)^d\hspace{-0.1em}\to \hspace{-0.1em}V_h^\ell$, the \textit{(local)
$L^2$-projection}~into~$V_h^\ell$, which, for every $\bfz\hspace{-0.1em} \in\hspace{-0.1em}
\smash{L^1(\Omega)^d}$ and $\bfz_h\hspace{-0.1em}\in \hspace{-0.1em}V_h^\ell$,~is~defined~via
\begin{align}
\label{eq:PiDG}
\bighskp{\PiDG \bfz}{\bfz_h}=\hskp{\bfz}{\bfz_h}\,.
\end{align}
Analogously, we define the (local)
$L^2$-projection into $X_h^\ell$, i.e., ${\PiDG\colon L^1(\Omega)^{d\times d} \to \Xhk}$.

For  every $\bfw_h\in \WDG$, we denote by $\nabla_h \bfw_h\in L^p(\Omega)$,
the \textit{local gradient},~defined~via~$(\nabla_h \bfw_h)|_K\coloneqq \nabla(\bfw_h|_K)$ for~all~${K\in\mathcal{T}_h}$.
For every $K\in \mathcal{T}_h$, ${\bfw_h\in \WDG}$~admits~an
interior~trace~${\textrm{tr}^K(\bfw_h)\in L^p(\partial K)}$. For each face
$\gamma\in \Gamma_h$ of a simplex $K\in \mathcal{T}_h$, we denote the respective
interior trace by
$\smash{\textup{tr}^K_\gamma(\bfw_h)\in L^p(\gamma)}$.~Then, for
every $\bfw_h\in \WDG$ and interior faces $\gamma\in \Gamma_h^{i}$ shared by
adjacent elements $K^-_\gamma, K^+_\gamma\in \mathcal{T}_h$,~we~\mbox{denote}~by
\begin{align}
\{\bfw_h\}_\gamma&\coloneqq \smash{\tfrac{1}{2}}\big(\textup{tr}_\gamma^{K^+}(\bfw_h)+
\textup{tr}_\gamma^{K^-}(\bfw_h)\big)\in
L^p(\gamma)\,, \label{2.20}\\
\jump{\bfw_h\otimes\bfn}_\gamma
&\coloneqq \textup{tr}_\gamma^{K^+}(\bfw_h)\otimes\bfn^+_\gamma+
\textup{tr}_\gamma^{K^-}(\bfw_h)\otimes\bfn_\gamma^- 
\in L^p(\gamma)\,,\label{eq:2.21}
\end{align}
the \textit{average} and \textit{normal jump}, respectively, of $\bfw_h$ on $\gamma$.
Moreover,  for every $\bfw_h\in \WDG$ and boundary faces $\gamma\in \partial\Omega$, we define boundary averages and 
boundary~jumps,~respectively,~via
\begin{align}
\{\bfw_h\}_\gamma&\coloneqq \textup{tr}^\Omega_\gamma(\bfw_h) \in L^p(\gamma)\,,\label{eq:2.23a} \\
\jump{ \bfw_h\otimes\bfn}_\gamma&\coloneqq 
\textup{tr}^\Omega_\gamma(\bfw_h)\otimes\bfn \in L^p(\gamma)\,,\label{eq:2.23} 
\end{align}
where $\bfn\colon\partial\Omega\to \mathbb{S}^{d-1}$ denotes the unit normal vector field to $\Omega$ pointing outward. 
Analogously, we
define $\{\bfX_h\}_\gamma$ and $ \jump{\bfX_h\bfn}_\gamma
$~for all $\bfX_h \in \Xhk$ and $\gamma\in \Gamma_h$. If there is no
danger~of~confusion, we omit~the~index~${\gamma\in \Gamma_h}$, in particular,  when we interpret jumps and averages as global functions defined on the whole of  $\Gamma_h$.

\subsubsection{DG gradient and jump operators}

For every $\ell \in \mathbb{N}_0$ and every face $\gamma\in \Gamma_h$, we define the
\textit{(local)~jump \mbox{operator}}
$\boldsymbol{\mathcal{R}}_{h,\gamma}^\ell \colon\WDG \to X_h^\ell $, using
Riesz representation, for every $ \bfw_h\in \smash{\WDG}$ via
\begin{align}
\big(\boldsymbol{\mathcal{R}}_{h,\gamma}^\ell\bfw_h,\bfX_h\big)
\coloneqq \big\langle \llbracket\bfw_h\otimes\bfn\rrbracket_\gamma,\{\bfX_h\}_\gamma\big\rangle_\gamma
\quad\text{ for all }\bfX_h\in X_h^\ell\,.\label{eq:2.25}
\end{align}
Then, for every $\ell \in \mathbb{N}_0$, the \textit{(global) jump operator} $\smash{\Rhk\coloneqq \sum_{\gamma\in \Gamma_h}{\boldsymbol{\mathcal{R}}_{\gamma,h}^\ell}\colon\WDG \to X_h^\ell}$, 
by definition, for every $\bfw_h\in \smash{\WDG}$ and  $\bfX_h\in X_h^\ell$ satisfies
\begin{align}
\big(\Rhk\bfw_h,\bfX_h\big)=\big\langle
\llbracket\bfw_h\otimes\bfn\rrbracket,\{\bfX_h\}\big\rangle_{\Gamma_h}\,.\label{eq:2.25.1}
\end{align}
In addition, for every $\ell \in \mathbb{N}_0$, the \textit{DG gradient operator} 
$  \Ghk\colon\WDG\to L^p(\Omega)$, for every $\bfw_h\in \smash{\WDG}$, is  defined via
\begin{align}
\boldsymbol{\mathcal G}^\ell_{h}\bfw_h\coloneqq 
\nabla_h\bfw_h-\boldsymbol{\mathcal R}^\ell_h\bfw_h
\quad\text{ in }L^p(\Omega)\,.\label{eq:DGnablaR} 
\end{align}
In particular, for every $\bfw_h\in \smash{\WDG}$ and $\bfX_h\in X_h^\ell$, we have that 
\begin{align}
\big(\Ghk\bfw_h,\bfX_h\big)=(\nabla_h\bfw_h,\bfX_h)
-\big\langle \llbracket
\bfw_h\otimes\bfn\rrbracket,\{\bfX_h\}\big\rangle_{\Gamma_h}
\,.  \label{eq:DGnablaR1}
\end{align}
Note that for every $\bfz\in \Vo$, we have that $\Ghk \bfz=\nabla\bfz
$ in $L^p(\Omega)$.
For every $\bfw_h\in \WDG$, we  introduce the \textit{DG norm} via
\begin{align}
\|\bfw_h\|_{\nabla,p,h}\coloneqq \|\nabla_h\bfw_h\|_p+h^{\frac{1}{p}}\big\|h^{-1}\jump{\bfw_h\otimes \bfn}\big\|_{p,\Gamma_h}\,,
\end{align}
which turns $\WDG$ into a Banach space\footnote{The completeness of
$\WDG$ equipped with $\|\cdot\|_{\nabla,p,h}$, for every fixed
$h>0$, follows from ${\|\bfw_h\|_p\leq c\,\|\bfw_h\|_{\nabla,p,h}}$
for all $\bfw_h\in \smash{\WDG}$ (cf.~\cite[Lemma A.9]{dkrt-ldg})
and an element-wise application of the trace
theorem.\vspace{-1cm}}. Owing to \cite[{(A.26)--(A.28)}]{dkrt-ldg},
there exists a constant $c>0$, depending on $p\in (1,\infty)$, $\ell \in \setN_0$, and the chunkiness $\omega_0>0$, such that $\bfw_h\in \smash{\WDG}$, it holds
\begin{align}\label{eq:eqiv0}
c^{-1}\,\|\bfw_h\|_{\nabla,p,h}\leq \big\|\Ghnk\bfw_h\big\|_p+h^{\frac{1}{p}}\big\|h^{-1}\jump{\bfw_h\otimes \bfn}\big\|_{p,\Gamma_h}\leq c\,\|\bfw_h\|_{\nabla,p,h}\,.
\end{align}

\subsubsection{Symmetric DG gradient and symmetric jump operators}
Following \cite[Sec. \!4.1.1]{BCPH20}, we define~a~symme-trized
version of the DG gradient. \hspace{-0.17em}For every $\bfw_h\hspace{-0.17em}\in\hspace{-0.17em}
\WDG$, we denote by
${\bfD_h\bfw_h\hspace{-0.17em}\coloneqq \hspace{-0.17em}[\nabla_h\bfw_h]^{\textup{sym}}\hspace{-0.17em}\in\hspace{-0.17em}
L^p(\Omega;\mathbb{R}^{d\times
d}_{\textup{sym}})}$, the \textit{local symmetric gradient}.
In~addition, for every $\ell \in \setN_0$, we define the
\textit{symmetric DG gradient
operator} $ \smash{\Dhk}\colon\WDG\to L^p(\Omega;\mathbb{R}^{d\times
d}_{\textup{sym}})$,~for~every~${\bfw_h\in \WDG}$
via
$\smash{\Dhk\bfw_h\coloneqq [\Ghk\bfw_h]^{\textup{sym}}}
\in L^p(\Omega;\mathbb{R}^{d\times d}_{\textup{sym}})
$, i.e., if we define the \textit{symmetric jump {operator}}
$\smash{\Rhks}\colon\WDG\to X_h^{\ell, \textup{sym}}:=X_h^{\ell}\cap L^p(\Omega;\mathbb{R}^{d\times d}_{\textup{sym}})$ via
${\smash{\Rhks}\bfw_h\coloneqq [\Rhk\bfw_h]^{\textup{sym}}\in
X_h^{\ell, \textup{sym}}}$ for every ${\bfw_h\in
\WDG}$, then for every ${\bfw_h\in
\WDG}$,~we~have~that
\begin{align}\label{eq:defD}
\smash{\Dhk\bfw_h =\bfD_h\bfw_h
-\Rhks\bfw_h
\quad\text{ in }L^p(\Omega;\mathbb{R}^{d\times d}_{\textup{sym}})\,.}
\end{align}
In particular, for every $\bfw_h\in \WDG$ and $\bfX_h\in X_h^{\ell,
  \textup{sym}}$, we have that
\begin{align}
\smash{\big(\Dhk\bfw_h,\bfX_h\big)
=(\bfD_h\bfw_h,\bfX_h)
-\big\langle \llbracket \bfw_h\otimes\bfn\rrbracket,\{\bfX_h\}\big\rangle_{\Gamma_h}\,.}\label{eq:2.24}
\end{align}
For every $\bfw_h\in \WDG$, we  introduce the \textit{symmetric DG norm}~via 
\begin{align}
\smash{\|\bfw_h\|_{\bfD,p,h}\coloneqq \|\bfD_h\bfw_h\|_p
+\smash{h^{\frac{1}{p}}\big\|  h^{-1} \llbracket\bfw_h\otimes\bfn\rrbracket\big\|_{p,\Gamma_h}}}\,.\label{eq:2.29}
\end{align}
Owing to \cite[Proposition 2.5]{kr-pnse-ldg-1}, there exists a
constant~${c>0}$, depending on $p\in (1,\infty)$, $\ell \in \setN$, and
the chunkiness $\omega_0>0$, such that for every $\bfw_h\in \WDG$, it
holds
\begin{align}
\begin{aligned}
\smash{c^{-1}\,\|\bfw_h\|_{\bfD,p,h} \leq \big\|\Dhk\bfw_h\big\|_p
+h^{\frac{1}{p}}\big\|
h^{-1}\llbracket\bfw_h\otimes\bfn\rrbracket\big\|_{p,\Gamma_h}
\leq c\,\|\bfw_h\|_{\bfD,p,h}\,.}
\end{aligned}\label{eq:equi2}
\end{align}

The following discrete Korn inequality on $V_h^\ell$ demonstrates that $\|\cdot\|_{\bfD,p,h}\sim\|\cdot\|_{\nabla,p,h}$ on $V_h^\ell$ 
and, thus, forms a cornerstone of the numerical  analysis of the $p$-Navier--Stokes~system~\eqref{eq:p-NS}.
\begin{proposition}[Discrete Korn inequality]\label{korn}
For every $p\in (1,\infty)$ and $\ell \in \setN$, there
exists a constant ${c>0}$,  depending on $p\in
(1,\infty)$, $\ell$ and the chunkiness $\omega_0>0$, such that 
for every $\bfz_h\in V_h^\ell$, it holds
\begin{align*}
\smash{\|\bfz_h\|_{\nabla,p,h}\leq
c\,\|\bfz_h\|_{\bfD,p,h}}\,. 
\end{align*}
\end{proposition}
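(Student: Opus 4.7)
The plan is to follow the now-standard comparison strategy for discrete Korn inequalities on DG spaces: pass from $\bfz_h$ to a conforming approximation, apply the classical (continuous) Korn inequality there, and control the non-conforming defect by the jump penalty terms that are already part of both norms.

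\textbf{Step 1.} I would invoke a conforming quasi-interpolation operator $I_h^c\colon V_h^\ell\to V_{h,c}^\ell\cap W^{1,p}_0(\Omega)^d$ of Oswald/Scott--Zhang/averaging type (where the nodal values on $\partial\Omega$ are set to zero in order to enforce homogeneous boundary values). For such an operator it is well known (cf.~Karakashian--Pascal, Ern--Guermond, or \cite{dkrt-ldg}) that
\begin{align*}
\|\nabla_h(\bfz_h-I_h^c\bfz_h)\|_p \leq c\,h^{\frac{1}{p}}\big\|h^{-1}\jump{\bfz_h\otimes\bfn}\big\|_{p,\Gamma_h}\,,
\end{align*}
where the fact that both interior and boundary faces enter the jump seminorm mirrors the fact that $I_h^c\bfz_h$ is both interelement continuous and vanishes on $\partial\Omega$.

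\textbf{Step 2.} Since $I_h^c\bfz_h\in W^{1,p}_0(\Omega)^d$, the classical Ne\v{c}as--Korn inequality gives
\begin{align*}
\|\nabla I_h^c\bfz_h\|_p\leq c\,\|\bfD I_h^c\bfz_h\|_p\,.
\end{align*}
Using the pointwise estimate $|\bfD_h\bfw|\leq |\nabla_h\bfw|$ together with the triangle inequality, this yields
\begin{align*}
\|\bfD I_h^c\bfz_h\|_p\leq \|\bfD_h\bfz_h\|_p+\|\nabla_h(\bfz_h-I_h^c\bfz_h)\|_p\,.
\end{align*}

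\textbf{Step 3.} Combining the triangle inequality for $\|\nabla_h\bfz_h\|_p$ with the estimates of Steps 1 and 2 produces
\begin{align*}
\|\nabla_h\bfz_h\|_p
&\leq \|\nabla I_h^c\bfz_h\|_p+\|\nabla_h(\bfz_h-I_h^c\bfz_h)\|_p\\
&\leq c\,\|\bfD_h\bfz_h\|_p+c\,h^{\frac{1}{p}}\big\|h^{-1}\jump{\bfz_h\otimes\bfn}\big\|_{p,\Gamma_h}\,.
\end{align*}
Adding the common jump contribution $h^{1/p}\|h^{-1}\jump{\bfz_h\otimes\bfn}\|_{p,\Gamma_h}$ on both sides produces exactly $\|\bfz_h\|_{\nabla,p,h}\leq c\,\|\bfz_h\|_{\bfD,p,h}$, with a constant depending only on $p$, $\ell$ and the chunkiness $\omega_0$.

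\textbf{Main obstacle.} The only non-routine ingredient is the conforming averaging operator together with its $L^p$-jump error estimate in Step~1; everything else is triangle inequality plus the classical Korn inequality. One must ensure that the averaging operator can be chosen to land in $V_{h,c}^\ell$ (which requires $\ell\ge 1$, consistent with the hypothesis) and to respect homogeneous boundary data; handling the latter is the one place where the boundary part of $\Gamma_h$ is essential, since otherwise one only controls the \emph{symmetric gradient} modulo infinitesimal rigid motions rather than $\nabla_h\bfz_h$ itself.
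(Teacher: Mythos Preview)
Your argument is correct and is precisely the standard route to DG Korn inequalities: conforming averaging operator with $L^p$-jump error bound, classical Korn on the conforming part, triangle inequality to reassemble. The paper itself does not give a proof here; it simply cites \cite[Proposition~2.4]{kr-pnse-ldg-1}, whose argument follows essentially the same strategy you outline (averaging to the $C^0$-conforming subspace, classical Korn, jump control of the defect). So there is nothing to correct---your proposal matches what the cited reference does.
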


\begin{proof}
See \cite[Proposition 2.4]{kr-pnse-ldg-1}.
\end{proof}

\subsubsection{DG divergence operator}
The \textit{local divergence}, for every $\bfw_h\in\WDG$,~is~defined~via~$\textup{div}_h\bfw_h\coloneqq \text{tr}(\nabla_h \bfw_h)\in  L^p(\Omega)$. 
In addition, for every $\ell \in\setN_0$, the \textit{DG divergence operator} ${\Divhk\colon\WDG\to L^p(\Omega)}$, for every $\bfw_h\in  \WDG$, is defined via $\smash{\Divhk\bfw_h\coloneqq \text{tr}(\Ghk\bfw_h)
=\text{tr}(\Dhk\bfw_h)\in L^p(\Omega)}$,~i.e., 
\begin{align*}    
\Divhk\bfw_h=\textup{div}_h\bfw_h-\textup{tr}(\Rhk\bfw_h)\quad\text{ in }L^p(\Omega)\,.
\end{align*}
In particular, 
for every $\bfw_h\in \WDG$ and $z_h  \in Q_h^\ell$, we have that
\begin{align*}
\big(\Divhk\bfw_h,z_h\big)&=(\textup{div}_h\bfw_h,z_h)
-\big\langle \llbracket \bfw_h\cdot\bfn\rrbracket,\{z_h\}\big\rangle_{\Gamma_h}\\
&=-(\bfw_h,\nabla_h z_h)
+\big\langle \{\bfw_h\cdot\bfn\}, \llbracket z_h\rrbracket\big\rangle_{\Gamma_h^{i}}\,,
\end{align*}
i.e., $(\Divhk \bfw_h,z_h)=-( \bfw_h,\nabla z_h)$ if $z_h  \in \Qhkc$. As a result,  for every $\bfz\in W^{1,p}_0(\Omega)$~and~$z_h\in \Qhkc$,~it~holds
\begin{align}
\label{eq:div-dg}\big(\Divhk \PiDG \bfz,z_h\big)=-( \bfz,\nabla z_h)=(\divo\bfz, z_h)\,.
\end{align}

\subsection{The $p$-Navier--Stokes equations}

We now show that different DG formulations of the $p$-Navier--Stokes
equations fit into the abstract framework of Section \ref{sec:3}. We
start proving that the approximation of divergence-free Sobolev
functions via discretely divergence-free DG finite element spaces is a
Bochner non-conforming approximation. From now on, we restrict
ourselves to the case $\ell \in \setN$ for the polynomial degree in the
broken function spaces.
\begin{proposition}\label{3.3}
For given $\ell \in\mathbb{N}$ and $h>0$, we define 
\begin{align*}
\Vhk(0)&\coloneqq \big\{\bfz_h\in \Vhk
\fdg \big(\Divhk\bfz_h,z_h\big)=0\text{ for all }z_h\in \Qhkc\big\}\,.
\end{align*}
If $p>\frac{2d}{d+2}$ and $(h_n)_{n\in \mathbb{N}}\subseteq\left(0,\infty\right)$ is a null sequence,
$( \Vhnk(0))_{n\in \mathbb{N}}$ 
is a Bochner non-conforming approximation of $\Vo(0)$ with respect to $(\WDGn)_{n\in \mathbb{N}}$.
\end{proposition}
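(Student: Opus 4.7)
My plan is to verify the two conditions (NC.1) and (NC.2) of Definition~\ref{3.1} separately, tackling the purely spatial (NC.1) first and then the Bochner-level (NC.2).

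For (NC.1), I would choose the dense subset $D \coloneqq \mathcal{V}$, which is dense in $\Vo(0)$ by construction. Given $\bfv \in \mathcal{V}$ and a null sequence $(h_n)_{n\in\mathbb{N}} \subseteq (0,\infty)$, the candidate approximant is $\bfv_n \coloneqq \PiDGn \bfv \in \Vhnk$. That $\bfv_n \in \Vhnk(0)$ follows immediately from the commutation identity~\eqref{eq:div-dg}, since $(\Divhnk \PiDGn \bfv, z_h) = (\divo \bfv, z_h) = 0$ for every $z_h \in \Qhnkc$. For the convergence $\|\bfv_n - \bfv\|_{\WDGn} \to 0$, I exploit that $\bfv \in C_0^\infty(\Omega)$ has trivial jumps on every face of $\Gamma_{h_n}$, so that $\jump{(\bfv_n - \bfv) \otimes \bfn} = \jump{\bfv_n \otimes \bfn}$; combining standard $L^2$-projection error estimates, the scaled trace inequality, and polynomial inverse estimates then yields both $\|\nabla_{h_n}(\bfv_n - \bfv)\|_p \to 0$ and $h_n^{1/p} \|h_n^{-1} \jump{\bfv_n \otimes \bfn}\|_{p,\Gamma_{h_n}} \to 0$ as $h_n \to 0$.

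For (NC.2), since $X_n = \WDGn$ itself varies with $n$, Proposition~\ref{prop:quasi} does not apply and the identification must be carried out ``by hand''. Given a bounded sequence $\bfv_n \in L^p(I, V_{h_{m_n}}^\ell(0)) \cap_{\smash{j}} L^\infty(I, L^2(\Omega))$, Banach--Alaoglu immediately produces a cofinal $\Lambda \subseteq \mathbb{N}$ and $\bfw \in L^\infty(I, L^2(\Omega))$ with $j\bfv_n \overset{\ast}{\rightharpoondown} \bfw$ in $L^\infty(I, L^2(\Omega))$. Using~\eqref{eq:eqiv0}, the uniform bound passes to $\boldsymbol{\mathcal{G}}_{h_{m_n}}^\ell \bfv_n$ in $L^p(I \times \Omega)^{d \times d}$ and to the weighted jump norm $h_{m_n}^{1/p} \|h_{m_n}^{-1} \jump{\bfv_n \otimes \bfn}\|_{L^p(I, L^p(\Gamma_{h_{m_n}}))}$; extracting further weak limits $\mathbf{G} \in L^p(I \times \Omega)^{d \times d}$ and $\bfv \in L^p(I \times \Omega)^d$, I would identify $\mathbf{G} = \nabla \bfv$ with $\bfv \in L^p(I, \Vo(0))$ via the integration-by-parts identity
\begin{align*}
\int_I \big(\boldsymbol{\mathcal{G}}_{h_{m_n}}^\ell \bfv_n, \boldsymbol{\Phi}\big) \, dt
= -\int_I (\bfv_n, \divo \boldsymbol{\Phi}) \, dt + \int_I \big\langle \jump{\bfv_n \otimes \bfn}, \{\boldsymbol{\Phi}\} - \boldsymbol{\Phi} \big\rangle_{\Gamma_{h_{m_n}}} \, dt,
\end{align*}
valid for smooth $\boldsymbol{\Phi}$, in which the jump contribution vanishes as $h_{m_n} \to 0$ by H\"older's inequality and the standard estimate $\|\{\boldsymbol{\Phi}\} - \boldsymbol{\Phi}\|_{p',\Gamma_{h_{m_n}}} = O(h_{m_n}^{1/p'})$ for smooth $\boldsymbol{\Phi}$. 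Testing additionally against $\boldsymbol{\Phi}$ not vanishing on $\partial\Omega$ forces the trace of $\bfv$ to vanish, while the discrete divergence-free constraint is transferred to $\divo \bfv = 0$ by testing against $q_{h_{m_n}} \in Q_{h_{m_n},c}^\ell$ approximating an arbitrary $\varphi \in C_0^\infty(\Omega)$ and invoking~\eqref{eq:div-dg}. Uniqueness of weak limits finally yields $\bfw = j\bfv$.

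The main technical obstacle is the simultaneous variation of the space $X_n = \WDGn$ and the approximation scale $h_{m_n}$ in the limit passage of the gradient identification: it requires carrying the weighted jump bounds explicitly through the test-function argument rather than invoking an abstract compactness theorem. The hypothesis $p > \frac{2d}{d+2}$ enters through the Sobolev embedding $W^{1,p}_0(\Omega) \hookrightarrow L^2(\Omega)$, which makes both $(\Vo(0), \Ho(0), j)$ and each $(\WDGn, L^2(\Omega), j)$ genuine evolution triples with $j$ dense, so that the identification $\bfw = j\bfv$ makes sense and the target Bochner space $L^p(I,\Vo(0)) \cap_{\smash{j}} L^\infty(I,\Ho(0))$ is well-defined.
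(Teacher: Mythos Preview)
Your overall strategy is sound and close to the paper's, but there is one slip and one genuine methodological difference worth noting.

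\textbf{A slip in the correction term.} In your integration-by-parts identity the residual $\{\boldsymbol{\Phi}\}-\boldsymbol{\Phi}$ is identically zero for smooth $\boldsymbol{\Phi}$ (by the average conventions \eqref{2.20} and \eqref{eq:2.23a}), so the identity as written cannot be correct. The lifting $\Rhnk$ lands in $X_{h_n}^\ell$, whence $(\Rhnk\bfv_n,\boldsymbol{\Phi})=(\Rhnk\bfv_n,\PiDGn\boldsymbol{\Phi})=\langle\jump{\bfv_n\otimes\bfn},\{\PiDGn\boldsymbol{\Phi}\}\rangle_{\Gamma_{h_{m_n}}}$; combining this with element-wise integration by parts gives the correction $\langle\jump{\bfv_n\otimes\bfn},\{\boldsymbol{\Phi}-\PiDGn\boldsymbol{\Phi}\}\rangle_{\Gamma_{h_{m_n}}}$, which does vanish at rate $O(h_{m_n})$ by the trace-projection estimate you invoke. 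With this fix, your argument goes through.

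\textbf{Comparison with the paper's route for (NC.2).} The paper does not argue the zero trace by testing against $\boldsymbol{\Phi}$ that are nonzero on $\partial\Omega$. Instead it extends each $\bfv_n(t)$ by zero to a strictly larger polyhedral domain $\widehat{\Omega}\supsetneq\Omega$ carrying an extended triangulation $\widehat{\mathcal{T}}_{h_n}$; then $\partial\Omega$ becomes a set of \emph{interior} faces of $\widehat{\Gamma}_{h_n}$, and one may test against $\bfX\in C_0^\infty(\widehat{\Omega})$ throughout. The weak limit $\bfv$ on $\widehat{\Omega}$ automatically vanishes on $\widehat{\Omega}\setminus\Omega$, which delivers $\bfv\in L^p(I,\Vo)$ without a separate trace step. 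Your two-stage approach---first show $\bfv\in W^{1,p}(\Omega)$ via compactly supported $\boldsymbol{\Phi}$, then extract the boundary identity from non-vanishing $\boldsymbol{\Phi}$---is equally valid but requires you to have $\bfv\in W^{1,p}(\Omega)$ in hand before the trace makes sense; the extension trick sidesteps this. For (NC.1) the paper simply cites \cite{kr-pnse-ldg-1}; your explicit construction via $\PiDGn$ and \eqref{eq:div-dg} is exactly the expected content of that reference.
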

\begin{proof}
Apparently, $(\Vo(0),\Ho(0), \mathrm{id})$ and $(\WDGn,L^2(\Omega), \mathrm{id} )$, $n\in \mathbb{N}$, form evolution triples 
such that $\Ho(0)\subseteq L^2(\Omega)$ with~$(\cdot,\cdot)_{\Ho(0)}=(\cdot,\cdot)_{L^2(\Omega)}$ in 
$\Ho(0)\times \Ho(0)$ and for every $n\in \mathbb{N}$, it holds $\Vo(0)\subseteq\WDGn$ with 
$\|\nabla \cdot\|_p=\|\cdot\|_{\nabla,p,h_n}$ in~$\Vo(0)$~and~${\|\cdot\|_2\leq c\,\|\cdot\|_{\nabla,p,h_n}}$ in $\WDGn$
for some constant $c>0$, which does not depend on $n\in \mathbb{N}$, by virtue of 
\cite[Proposition 2.6]{kr-pnse-ldg-3}.
So, let us verify  (\hyperlink{NC.1}{NC.1}) 
and (\hyperlink{NC.2}{NC.2}):

\textit{ad (NC.1).} See \cite[Lemma 5.1]{kr-pnse-ldg-1}.

\textit{ad (NC.2).} Without loss of
generality, we assume~that $m_n=n$ for all $n\in \mathbb{N}$. Let
${\bfv_n\hspace{-0.1em}\in \hspace{-0.1em} L^p(I,V_{h_n}^\ell(0))}\cap L^\infty(I,L^2(\Omega))$,
$n\hspace{-0.1em}\in\hspace{-0.1em} \mathbb{N}$, 
be such that
$\sup_{n\in \mathbb{N}}{\|\bfv_n\|_{{L^p(I,W^{1,p}(\mathcal{T}_{h_{n}}))\cap
	L^\infty(I,L^2(\Omega))}}}<\infty$.  
Since
$\bfv_n(t)\hspace{-0.1em}\in \hspace{-0.1em}\Vhnk(0)\hspace{-0.1em}\subseteq\hspace{-0.1em}
\smash{W^{1,p}(\mathcal{T}_{h_n})}$~for~almost~every~$t\hspace{-0.1em}\in\hspace{-0.1em}
I$~and~all~$n\hspace{-0.1em}\in \hspace{-0.1em}\mathbb{N}$,~we~can~extend this function
for almost every $t\in I$ by zero to $\widehat{\Omega}\setminus
\Omega$, where $\widehat {\Omega}$ is introduced in Section
\ref{sec:triang}, and denote this extension by 
$\overline{\bfv}_n(t)$. Thus, 
$\overline{\bfv}_n(t)$ belongs to $\smash{W^{1,p}(\widehat{\mathcal{T}}_{h_n})}$
for almost every $t\in I$ and all $n\in \mathbb{N}$.  In particular, it holds
$\Ghnk\overline{\bfv}_n(t)=\Ghnk \bfv_n(t)$ in $\Omega$ and
$\Ghnk\overline{\bfv}_n(t)=\mathbf{0}$ in
$\widehat{\Omega}\setminus \Omega$ for almost every $t\in I$ and all
$n\in \mathbb{N}$.  Then, owing to \eqref{eq:eqiv0}, we have
that
\begin{align}
\begin{aligned}
\sup_{n\in \mathbb{N}}{\big\|\Ghnk\overline{\bfv}_n\big\|_{L^p(I,L^p( \widehat{\Omega}))}}
=\sup_{n\in \mathbb{N}}{\big\|\Ghnk \bfv_n\big\|_{L^p(I,L^p(\Omega))}}
\leq \sup_{n\in \mathbb{N}}{\|\bfv_n\|_{L^p(I,\WDGn)}}<\infty\,.
\end{aligned}\label{eq:3.3.1}
\end{align}
Thus, we obtain a cofinal subset $\Lambda\subseteq \mathbb{N}$  and functions
$\bfv\in L^\infty(I,L^2(\widehat{\Omega}))$ and 
$\bfG\in L^p(I,L^p(\widehat{\Omega}))$ such that
\begin{alignat}{3}
\overline{\bfv}_n&\;\;\overset{\ast}{\rightharpoondown}\;\;\bfv
&&\quad\text{ in }L^\infty(I,L^2(\widehat{\Omega}))&&\quad(\Lambda\ni n\to\infty)\,,\label{eq:3.3.2}\\
\Ghnk\overline{\bfv}_n&\;\;\weakto\;\;\bfG 
&&\quad\text{ in }L^p(I,L^p(\widehat{\Omega}))&&\quad(\Lambda\ni n\to \infty)\,.\label{eq:3.3.3}
\end{alignat}
We next prove that $\bfv \hspace{-0.1em}\in \hspace{-0.1em} L^p(I,\Vo(0))$ with 
$\nabla \bfv \hspace{-0.1em}=\hspace{-0.1em}\bfG$ in $L^p(I,L^p(\widehat{\Omega}))$. To this
end, let $\bfX \hspace{-0.1em}\in \hspace{-0.1em}
C_0^\infty(\widehat{\Omega})$,~${\varphi \hspace{-0.1em}\in \hspace{-0.1em} C_0^\infty(I)}$, and observe, using
\eqref{eq:DGnablaR1} on $\widehat{\Omega}$ and element-wise integration-by-parts, that for every $n\in \mathbb{N}$, it holds
\begin{align}
&\int_I(\Ghnk\overline{\bfv}_n(t),\bfX)_{\widehat{\Omega}}\varphi(t)\,\mathrm{d}t
=\int_I{(\nabla_{h_n}\overline{\bfv}_n(t),\bfX)_{\widehat{\Omega}}\varphi(t)\,\mathrm{d}t}
-\int_I{\langle \jump{\overline{\bfv}_n(t)\otimes\bfn},
\{\PiDGn\bfX\}\rangle_{\widehat{\Gamma}_{h_n}}\varphi(t)\,\mathrm{d}t}\notag\\&
=-\int_I{(\overline{\bfv}_n(t),\divo\bfX)_{\widehat{\Omega}}\varphi(t)\,\mathrm{d}t}
+\int_I{\langle \jump{\overline{\bfv}_n(t)\otimes\bfn},
\{\bfX-\PiDGn\bfX\}\rangle_{\widehat{\Gamma}_{h_n}}\varphi(t)\,\mathrm{d}t}\,.\label{eq:3.3.4}
\end{align}
Applying the H\"older inequality and
$\|\bfX-\PiDG\bfX\|_{p',\widehat{\Gamma}_h}
\leq c\,h^{\smash{1-\frac{1}{p'}}}\|\bfX\|_{1,p',\widehat{\Omega}}$  
(cf.~\cite[Corollary~A.19]{kr-phi-ldg}) we find, for every ${n\in \mathbb{N}}$,  that
\begin{align*}
\vert\langle \jump{\overline{\bfv}_n(t)\otimes\bfn},
\{\bfX-\PiDGn\bfX\}\rangle_{\smash{\widehat{\Gamma}_{h_n}}}\vert
&\leq c\, h_n^{\smash{1-\frac{1}{p}}}\|\{\bfX-\PiDGn\bfX\}\|_{p',\widehat{\Gamma}_{h_n}}\|\bfv_n(t)\|_{\nabla,p,h_n}
\\&\leq 
c\,h_n\,\|\bfX\|_{1,p',\widehat{\Omega}}\|\bfv_n(t)\|_{\nabla,p,h_n}\,,
\end{align*}
and, therefore, that
\begin{align}
\begin{aligned}
\bigg\vert\int_I{\langle \jump{\overline{\bfv}_n(t)\otimes\bfn},
	\{\bfX-\PiDGn\bfX\}\rangle_{\smash{\widehat{\Gamma}_{h_n}}}\varphi(t)\,\mathrm{d}t}\bigg\vert
\leq c\,h_n\,\|\bfX\|_{1,p',\widehat{\Omega}}
\|{\bfv}_n\|_{L^p(I,\WDGn)}\overset{n\to\infty}{\to}0\,.
\end{aligned}\label{eq:3.3.5}
\end{align}
If we pass in \eqref{eq:3.3.4} for $n\to \infty$, taking into account \eqref{eq:3.3.2}, 
\eqref{eq:3.3.3} and \eqref{eq:3.3.5} in doing so, then,   for 
every $\bfX\in C_0^\infty(\widehat{\Omega})$ 
and $\varphi\in C_0^\infty(I)$, we obtain
\begin{align*}
\int_I{(\bfG(t),\bfX)_{\widehat{\Omega}}\varphi(t)\,\mathrm{d}t}=-\int_I{(\bfv(t),\divo\bfX)_{\widehat{\Omega}}\varphi(t)\,\mathrm{d}t}\,,
\end{align*}
i.e., it holds $\bfv\in  L^\infty(I,L^2(\widehat{\Omega}))$ 
with $\nabla \bfv=\bfG$ in $L^p(I,L^p(\widehat{\Omega}))$. On the other hand,  looking~back~to~\eqref{eq:3.3.2}, it is readily seen that $\bfv(t)=\mathbf{0}$ in 
$\widehat{\Omega}\setminus\Omega$ for almost~every~${t\in I}$. Consequently,  we, eventually, find that
$\bfv\in L^p(I,\Vo)\cap L^\infty(I,L^2(\Omega))$.

Next, we prove that $\bfv\hspace{-0.1em}\in \hspace{-0.1em}
L^p(I,\Vo(0))\cap L^\infty(I,\Ho(0))$. To this end, let
${z\hspace{-0.1em}\in
\hspace{-0.1em}C_0^\infty(\Omega)}$~and~${\varphi\hspace{-0.1em}\in\hspace{-0.1em}
C_0^\infty(I)}$ be arbitrary. Then, 
$z_{h_n}\coloneqq \mathcal{I}_{h_n}^\ell z\in \Qhkc$, $n\in \mathbb{N}$, where $\mathcal{I}_{h_n}^\ell\colon C^0(\overline{\Omega})\to \Qhkc$ is the 
nodal interpolation operator, satisfies $z_{h_n}\to z$ in $W^{1,2}(\Omega)$ $(n\to \infty)$. Thus,~since $(\bfv_n(t),\nabla z_{h_n})_\Omega=-(\Divhnk\!\bfv_n(t), z_{h_n})_\Omega=0$ for a.e. $t\in I$ and all $n\in \mathbb{N}$, due to $\bfv_{n}(t)\in \Vhnk(0)$ for almost every $t\in I$ and all $n\in \mathbb{N}$, it~holds
\begin{align*}
\int_I{(\divo \bfv(t),z)_\Omega\varphi(t)\,\mathrm{d}t}=\int_I{(\bfv(t),\nabla z)_\Omega\varphi(t)\,\mathrm{d}t}
=\lim_{n\to\infty}{\int_I{(\bfv_{n}(t),\nabla z_{h_n})_\Omega\varphi(t)\,\mathrm{d}t}}=0\,,
\end{align*}
i.e., $\bfv\in L^p(I,\Vo(0))\cap L^\infty(I,\Ho(0))$.
\end{proof}

Next, we show that the LDG and the SIP approximation of the extra
stress tensor $\SSS$ are non-conforming pseudo-monotone. This enables
us to use Theorem \ref{3.9}. 
\begin{proposition}[Stress tensor, LDG]\label{ldg}
Let $\SSS$ satisfy Assumption \ref{assum:extra_stress}, let $\ell
\in \setN$, and let
$\alpha>0$. Moreover, for every $n\in \mathbb{N}$, let
$S_n^{\textup{\textbf{\textsf{\tiny LDG}}}}\colon\WDGn\to
\WDGn^*$,  for every $\bfw_{h_n},\bfz_{h_n}\in \WDGn$, be
defined~via  
\begin{align*}
\langle S_n^{\textup{\textbf{\textsf{\tiny LDG}}}}\bfw_{h_n},\bfz_{h_n}\rangle_{\smash{\WDGn}}
\coloneqq \big(\SSS(\Dhnk\!\bfw_{h_n}),
\Dhnk\!\bfz_{h_n}\big)
+\alpha\big\langle \SSS_{\avg{\abs{\Dhnk\!\bfw_{h_n}}}}(h_n^{-1}\jump{\bfw_{h_n}\otimes\bfn}),
\jump{\bfz_{h_n}\otimes\bfn}\big\rangle_{\Gamma_{h_n}}\,.
\end{align*}
Then, it holds:
\begin{itemize}
\item[(i)] For every $n\in \mathbb{N}$, the operator
$S_n^{\textup{\textbf{\textsf{\tiny LDG}}}}\colon\WDGn\to
\WDGn^*$ is well-defined, bounded, continuous, monotone,
and, thus, pseudo-monotone. In addition, for every
$\varepsilon> 0$, there exists a constant~$c_\vep>0$, 
depending only on $\vep^{-1}$, $\ell$, $\alpha$, the characteristics of
$\SSS$, $\delta$, $\abs{\Omega}$, and $\omega_0$, 
such that, for every $n\in \mathbb{N}$ and
$\bfw_{h_n},\bfz_{h_n}\in \WDGn$, it holds
\begin{align*}
\vert\langle S_n^{\textup{\textbf{\textsf{\tiny LDG}}}}\bfw_{h_n},\bfz_{h_n}\rangle_{\smash{\WDGn}}\vert
\leq \varepsilon\,\|\bfw_{h_n}\|_{\nabla,p,h_n}^p+c_{\varepsilon}\,(1+\|\bfz_{h_n}\|_{\nabla,p,h_n}^p)\,.
\end{align*}
\item[(ii)] There exist
constants $c_0,c_1> 0$, depending only on $\ell$, 
$\alpha$, the characteristics of $\SSS$, $\delta$, $\abs{\Omega}$, and $\omega_0$, such that for  every $n\in \mathbb{N}$ and $\bfv_{h_n}\in \Vhnk(0)$, it holds 
\begin{align*}
\langle S_n^{\textup{\textbf{\textsf{\tiny LDG}}}}\bfv_{h_n},\bfv_{h_n}\rangle_{\smash{\WDGn}}
\ge c_0\,\|\bfv_{h_n}\|_{\nabla,p,h_n}^p-c_1\,.
\end{align*}
\item[(iii)] The operators $S_n^{\textup{\textbf{\textsf{\tiny LDG}}}}\colon\WDGn\to \WDGn^*$, $n\in\mathbb{N}$, 
are non-conforming pseudo-monotone with respect 
to $(\Vhnk(0))_{n\in \mathbb{N}}$ and
$S\colon\Vo(0)\to \Vo(0)^*$,   for every $\bfv,\bfz\in
\Vo(0)$, defined via 
$\langle S\bfv,\bfz\rangle_{\Vo(0)}\coloneqq (\SSS(\bfD\bfv),
\bfD\bfz)$.
\end{itemize}
\end{proposition}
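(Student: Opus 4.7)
Part (i) rests on the $(p,\delta)$-structure of $\SSS$ and of the shifted operators $\SSS_a$. For the boundedness estimate, I combine the growth $|\SSS(\bfA)|,|\SSS_a(\bfA)|\leq c\,\phi_a'(|\bfA^{\textup{sym}}|)$ (from \eqref{1.5b} with $\bfB=\mathbf 0$) with the $\varepsilon$-Young inequality \eqref{ineq:young} applied to $(\phi_a,(\phi_a)^{\ast})$, the bound $(\phi_a)^{\ast}\circ \phi_a'\leq c\,\phi_a$, and the norm equivalence \eqref{eq:equi2}; monotonicity in the bulk reduces to \eqref{1.4b}, while in the stabilization term the same structural inequality for $\SSS_a$ combined with a Young-type handling of the shift-dependence yields the claim, after which continuity of $\SSS,\SSS_a$ plus Vitali give continuity of $S_n^{\textup{\textbf{\textsf{\tiny LDG}}}}$ and hence pseudo-monotonicity via \cite[Proposition 27.6]{zei-IIB}. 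For part (ii), \eqref{1.4b} with $\bfB=\mathbf 0$ and $\SSS(\mathbf 0)=\mathbf 0$ yields the pointwise lower bound $\SSS(\bfA)\!:\!\bfA\ge C_0\,\phi_{|\bfA^{\textup{sym}}|}(|\bfA^{\textup{sym}}|)\ge c\,|\bfA^{\textup{sym}}|^p-c'$ (via Remark \ref{rem:phi_a} and a case split in $p\gtrless 2$), and analogously for $\SSS_a$; integrating gives
\begin{align*}
\langle S_n^{\textup{\textbf{\textsf{\tiny LDG}}}}\bfv_{h_n},\bfv_{h_n}\rangle \ge c\,\|\Dhnk\bfv_{h_n}\|_p^p + c\,h_n\big\|h_n^{-1}\jump{\bfv_{h_n}\otimes\bfn}\big\|_{p,\Gamma_{h_n}}^p - c_1\,,
\end{align*}
and the discrete Korn inequality (Proposition \ref{korn}) together with \eqref{eq:equi2} upgrades the right-hand side to $c_0\,\|\bfv_{h_n}\|_{\nabla,p,h_n}^p - c_1$.

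Part (iii) is the main step. Fix $\bfw_n\in \Vhnk(0)$ with $\sup_n\|\bfw_n\|_{\nabla,p,h_{m_n}}<\infty$, $\bfw_n\weakto\bfw$ in $L^2(\Omega)$, and $\limsup\langle S_{m_n}^{\textup{\textbf{\textsf{\tiny LDG}}}}\bfw_n,\bfw_n-\bfw\rangle\le 0$. By Propositions \ref{3.3} and \ref{3.2}(i) we have $\bfw\in\Vo(0)$, and by replaying the element-wise integration-by-parts argument from the proof of Proposition \ref{3.3} on the DG gradient one obtains $\Ghnk\bfw_n\weakto\nabla\bfw$ in $L^p(\Omega)$ along a subsequence; symmetrizing yields $\Dhnk\bfw_n\weakto\bfD\bfw$ in $L^p(\Omega;\mathbb{R}^{d\times d}_{\textup{sym}})$. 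Since $\bfw$ has vanishing jumps, one has the splitting
\begin{align*}
\langle S_n^{\textup{\textbf{\textsf{\tiny LDG}}}}\bfw_n,\bfw_n-\bfw\rangle = (\SSS(\Dhnk\bfw_n),\Dhnk\bfw_n-\bfD\bfw) + \alpha\, h_n \big\langle \SSS_{\avg{|\Dhnk\bfw_n|}}(\mathbf{J}_n),\mathbf{J}_n\big\rangle_{\Gamma_{h_n}}\,,
\end{align*}
with $\mathbf{J}_n\coloneqq h_n^{-1}\jump{\bfw_n\otimes\bfn}$. Adding and subtracting $\SSS(\bfD\bfw)$ in the bulk piece, the cross term $(\SSS(\bfD\bfw),\Dhnk\bfw_n-\bfD\bfw)\to 0$ by weak convergence; the remaining two summands $(\SSS(\Dhnk\bfw_n)-\SSS(\bfD\bfw),\Dhnk\bfw_n-\bfD\bfw)$ and $\alpha h_n\langle \SSS_{\avg{|\Dhnk\bfw_n|}}(\mathbf{J}_n),\mathbf{J}_n\rangle_{\Gamma_{h_n}}$ are both nonnegative (via \eqref{1.4b} applied to $\SSS$ and to $\SSS_a$ with $\bfB=\mathbf 0$, noting $\SSS_a(\mathbf 0)=\mathbf 0$), while their sum has $\limsup\le 0$; hence each tends to zero. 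The bulk convergence together with \eqref{1.4b} gives $\int_\Omega \phi_{|\Dhnk\bfw_n|}(|\Dhnk\bfw_n-\bfD\bfw|)\,\mathrm{d}x\to 0$, which by standard shifted-Orlicz arguments (cf.~\cite{dr-nafsa}) upgrades the weak limit to the strong $L^p$-convergence $\Dhnk\bfw_n\to\bfD\bfw$, and hence $\SSS(\Dhnk\bfw_n)\to\SSS(\bfD\bfw)$ in $L^{p'}$. Finally, for any $\bfz\in\Vo(0)$, since $\jump{(\bfw-\bfz)\otimes\bfn}=0$,
\begin{align*}
\langle S_n^{\textup{\textbf{\textsf{\tiny LDG}}}}\bfw_n,\bfw_n-\bfz\rangle = \langle S_n^{\textup{\textbf{\textsf{\tiny LDG}}}}\bfw_n,\bfw_n-\bfw\rangle + (\SSS(\Dhnk\bfw_n),\bfD(\bfw-\bfz)) \to \langle S\bfw,\bfw-\bfz\rangle\,,
\end{align*}
which yields the required $\liminf$ inequality (in fact, equality).

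The principal obstacles are confined to part (iii): the identification $\Dhnk\bfw_n\weakto\bfD\bfw$, which requires replaying the broken-space integration-by-parts argument of Proposition \ref{3.3} on the symmetric DG gradient, and, more substantially, the upgrade to strong $L^p$-convergence of $\Dhnk\bfw_n$ from vanishing of the shifted modular—a step that crucially exploits the uniform $(p,\delta)$-structure and the properties of $\{\phi_a\}_{a\ge 0}$ recorded in Remark \ref{rem:phi_a}. The stabilization term is handled cheaply throughout, both because the test function $\bfw$ lies in $\Vo(0)$ and has vanishing jumps, and because the argument leverages only the nonnegativity $\SSS_a(\mathbf{A}):\mathbf{A}\ge 0$ rather than any joint monotonicity in the shift variable.
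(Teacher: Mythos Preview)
Your argument is correct. The paper's own proof is essentially by citation to \cite[Lemma~5.2]{kr-pnse-ldg-1} for all three parts, so your sketch in fact supplies more detail than the paper does. Parts (i) and (ii) follow the same structural lines as what is presumably in that reference; the only point worth flagging is that monotonicity of the stabilization term involves a shift $a=\avg{|\Dhnk\bfw_{h_n}|}$ that itself depends on the argument, so your phrase ``Young-type handling of the shift-dependence'' hides a genuine (though known) inequality that couples the bulk and jump modulars---this is where the detailed proof in \cite{kr-pnse-ldg-1} earns its keep.

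For part (iii) your route is genuinely different from the one the paper uses for the analogous SIP result (Proposition~\ref{SIP}(iii)), which is the closest visible template. There the argument is a Minty trick: one tests against $\bfz_s=(1-s)\bfv+s\bfz$, uses only monotonicity and the weak convergence $\Dhnk\bfv_{h_n}\rightharpoonup\bfD\bfv$, and lets $s\to 0$ at the end---no strong convergence is ever established. You instead split $\langle S_n\bfw_n,\bfw_n-\bfw\rangle$ into two nonnegative pieces, force each to zero, and then upgrade the bulk piece via the shifted modular to \emph{strong} convergence $\Dhnk\bfw_n\to\bfD\bfw$ in $L^p$, after which the conclusion is immediate (and is an actual limit, not just a $\liminf$). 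Your approach buys a stronger intermediate result and is arguably more transparent for the LDG case, where both pieces are individually monotone; the Minty approach is more economical and, crucially, is what survives in the SIP setting, where the lifting correction spoils termwise monotonicity and one must work with the combined form for $\alpha>\alpha_0$.
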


\begin{proof}
\textit{ad (i).} Due to \cite[Lemma  5.2]{kr-pnse-ldg-1}, for every $n\hspace*{-0.1em}\in\hspace*{-0.1em} \mathbb{N}$, the operator 
$S_n^{\textup{\textbf{\textsf{\tiny LDG}}}}\colon\WDGn\hspace*{-0.1em}\to\hspace*{-0.1em} \WDGn^*$~is~well-defined, bounded, continuous, 
monotone,~and,~thus,~pseudo-monotone. Moreover, in
\cite[Lemma~5.2,~(5.8)]{kr-pnse-ldg-1} it is proved that there exists a constant $c>0$,  depending on $\alpha$, the characteristics of $\SSS$, and $\omega_0$,~such~that for every $\bfw_{h_n}\in \WDGn$,  it holds
\begin{align}\label{eq:ldg.1}
\|S_n^{\textup{\textbf{\textsf{\tiny LDG}}}}\bfw_{h_n}\|_{\WDGn^*}^{p'}\leq c\, \|\bfw_{h_n}\|_{\nabla,p,h_n}^p+c\,\delta^p\vert \Omega\vert\,.
\end{align}
Therefore, applying the $\varepsilon$-Young inequality  with $\psi=\vert \cdot\vert^p$ and using \eqref{eq:ldg.1}, we conclude that
\begin{align*}
\vert\langle S_n^{\textup{\textbf{\textsf{\tiny LDG}}}}\bfw_{h_n},\bfz_{h_n}\rangle_{\smash{\WDGn}}\vert &
\leq \varepsilon\,	\|S_n^{\textup{\textbf{\textsf{\tiny LDG}}}}\bfw_{h_n}\|_{\WDGn^*}^{p'}+c_{\varepsilon}\,\|\bfz_{h_n}\|_{\nabla,p,h_n}^p
\\&\leq \varepsilon\,c\, \|\bfw_{h_n}\|_{\nabla,p,h_n}^p+\varepsilon\,c\,\delta^p\vert \Omega\vert+c_{\varepsilon}\,\|\bfz_{h_n}\|_{\nabla,p,h_n}^p\,,
\end{align*}
which is the assertion.\\[-2mm]

\textit{ad (ii).} Due to \cite[Lemma  5.2, (5.4)]{kr-pnse-ldg-1}, there exists a constant $c>0$, depending on $\alpha$, the characteristics of $\SSS$, and $\omega_0$,~such~that for every  $\bfv_{h_n}\in \Vhnk(0)$, $n\in \mathbb{N}$,~it~holds
\begin{align*}
\langle S_n^{\textup{\textbf{\textsf{\tiny LDG}}}}\bfv_{h_n},\bfv_{h_n}\rangle_{\smash{\WDGn}}\ge c\,\|\bfv_{h_n}\|_{\nabla,p,h_n}^p-c\,\delta^p\,\vert \Omega\vert\,.
\end{align*}

\textit{ad (iii).} See \cite[Lemma 5.2]{kr-pnse-ldg-1}.
\end{proof}

\begin{proposition}[Stress tensor, SIP (lifting)]\label{SIP}
Let  $\SSS$ satisfy Assumption
\ref{assum:extra_stress}, let $\ell \in \setN$, and let
${\alpha >0}$. Moreover, for every $n\!\in\! \mathbb{N}$, let
$S_n^{\textup{\textsf{\textbf{\tiny
		SIP}}}}\colon\!\WDGn\!\to\! \WDGn^*$, for every
$\bfw_{h_n},\bfz_{h_n}\!\in \!\WDGn$,~\mbox{be~defined~via}
\begin{align*}
\langle S^{\textup{\textsf{\textbf{\tiny SIP}}}}_n\bfw_{h_n},\bfz_{h_n}\rangle_{\smash{\WDGn}}&
\coloneqq \big(\SSS(\Dhnk\!\bfw_{h_n}),\Dhnk\!\bfz_{h_n}\big)
+\alpha\big\langle \SSS(h_n^{-1}\jump{\bfw_{h_n}\otimes\bfn}),
\jump{\bfz_{h_n}\otimes\bfn}\big\rangle_{\Gamma_{h_n}}
\\&\quad-\big(\SSS(\Rhnks\bfw_{h_n}),\Rhnks\bfz_{h_n}\big)\,.
\end{align*}
Then, it holds:
\begin{itemize}
\item[(i)] For every $n\in \mathbb{N}$, the operator
$S^{\textup{\textsf{\textbf{\tiny SIP}}}}_n\colon\WDGn\to \WDGn^*$ is well-defined,~bounded~and~continuous,
and, thus, pseudo-monotone. In addition, for every
$\varepsilon> 0$, there exists a constant~$c_\vep>0$,
depending only on $\vep^{-1}$, $\ell$, $\alpha$, the characteristics of
$\SSS$, $\delta$, $\abs{\Omega}$, and $\omega_0$, 
such that, for every $n\in \mathbb{N}$ and
$\bfw_{h_n},\bfz_{h_n}\in \WDGn$, it holds
\begin{align*}
\vert \langle S^{\textup{\textsf{\textbf{\tiny SIP}}}}_n\bfw_{h_n},\bfz_{h_n}\rangle_{\smash{\WDGn}}\vert 
\leq \varepsilon\, \|\bfw_{h_n}\|_{\nabla,p,h_n}^p+c_{\varepsilon}\,(1+\|\bfz_{h_n}\|_{\nabla,p,h_n}^p)\,.
\end{align*}
\item[(ii)] There exist a constant
  $\alpha_0>0$, depending only on $\ell$, the characteristics of
$\SSS$, and $\omega_0$ such that for every $\alpha >\alpha_0$,
there exists  constants $c_0, c_1$, depending only on $\ell$, the characteristics of
$\SSS$, $\delta$, $\abs{\Omega}$,  $\omega_0$, and $\alpha$, such that for every $n\in \mathbb{N}$~and~${\bfv_{h_n}\in \Vhnk(0)}$, it holds 
\begin{align*}
\smash{\langle S^{\textup{\textsf{\textbf{\tiny SIP}}}}_n\bfv_{h_n},\bfv_{h_n}\rangle_{\smash{\WDGn}}
	\ge c_0\,\|\bfv_{h_n}\|_{\nabla,p,h_n}^p-c_1\,.}
\end{align*}
\item[(iii)] The operators $S^{\textup{\textsf{\textbf{\tiny SIP}}}}_n\colon\WDGn\to \WDGn^*$, $n\in\mathbb{N}$, 
are non-conforming pseudo-monotone with 
respect to $(\Vhnk(0))_{n\in \mathbb{N}}$ and $S\colon\Vo(0)\to \Vo(0)^*$ 
from Proposition \ref{ldg}, if
$\alpha>\alpha_0$ with $\alpha_0$ from (ii).
\end{itemize}
\end{proposition}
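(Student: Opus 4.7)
The plan is to mirror the LDG analysis of Proposition~\ref{ldg}, with the additional lifting term handled via the key estimate $\|\Rhnks \bfw_{h_n}\|_p^p \leq c\,h_n\,\|h_n^{-1}\jump{\bfw_{h_n}\otimes\bfn}\|_{p,\Gamma_{h_n}}^p$, which follows from the Riesz-representation definition of $\Rhnks$ together with discrete inverse trace estimates. For (i), I would split $S^{\textup{\textsf{\textbf{\tiny SIP}}}}_n$ into its three constituent terms; each is continuous because $\SSS$ is continuous and satisfies $|\SSS(\bfA)|\leq C(\delta+|\bfA^{\textup{sym}}|)^{p-1}$, while $\Dhnk$ and $\Rhnks$ are continuous linear maps on $\WDGn$. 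The growth bound is obtained by applying H\"older's inequality and the $\vep$-Young inequality \eqref{ineq:young} with $\psi=\phi$ to each term, rewriting the boundary integral as $\langle \SSS(h_n^{-1}\jump{\bfw_{h_n}\otimes\bfn}),\jump{\bfz_{h_n}\otimes\bfn}\rangle_{\Gamma_{h_n}} = h_n\langle \SSS(h_n^{-1}\jump{\bfw_{h_n}\otimes\bfn}), h_n^{-1}\jump{\bfz_{h_n}\otimes\bfn}\rangle_{\Gamma_{h_n}}$, and invoking the norm equivalence \eqref{eq:equi2}. Monotonicity of $\SSS$ plus the just-established continuity yields pseudo-monotonicity.

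\textbf{Part (ii).} I would apply \eqref{1.4b} with $\bfB=\mathbf{0}$, using $\SSS(\mathbf{0})=\mathbf{0}$ and the $(p,\delta)$-structure, to obtain the pointwise bound $\SSS(\bfA):\bfA\geq c\,|\bfA^{\textup{sym}}|^p - c\,\delta^p$. This yields
\begin{align*}
(\SSS(\Dhnk \bfv_{h_n}),\Dhnk \bfv_{h_n}) &\geq c\,\|\Dhnk \bfv_{h_n}\|_p^p - c\,\delta^p|\Omega|,\\
\alpha\,\langle \SSS(h_n^{-1}\jump{\bfv_{h_n}\otimes\bfn}),\jump{\bfv_{h_n}\otimes\bfn}\rangle_{\Gamma_{h_n}} &\geq \alpha\,c\,h_n\,\|h_n^{-1}\jump{\bfv_{h_n}\otimes\bfn}\|_{p,\Gamma_{h_n}}^p - \alpha\,C\,\delta^p|\Omega|.
\end{align*}
Combined with the upper bound $|(\SSS(\Rhnks \bfv_{h_n}),\Rhnks \bfv_{h_n})|\leq C'\,h_n\,\|h_n^{-1}\jump{\bfv_{h_n}\otimes\bfn}\|_{p,\Gamma_{h_n}}^p + C\,\delta^p|\Omega|$ (from the same Riesz estimate used above together with the growth of $\SSS$), setting $\alpha_0\coloneqq C'/c$ and taking $\alpha>\alpha_0$ leaves a strictly positive multiple of the jump-penalty term. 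The discrete Korn inequality (Proposition~\ref{korn}) together with \eqref{eq:equi2} then upgrades control of $\|\Dhnk \bfv_{h_n}\|_p^p$ and $h_n\|h_n^{-1}\jump{\bfv_{h_n}\otimes\bfn}\|_{p,\Gamma_{h_n}}^p$ on $\Vhnk(0)\subseteq V_{h_n}^\ell$ to a bound on $\|\bfv_{h_n}\|_{\nabla,p,h_n}^p$, producing the required coercivity.

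\textbf{Part (iii) and the main obstacle.} Let $\bfv_n \in V_{h_{m_n}}^\ell(0)$, $n\in\mathbb{N}$, with $\sup_n \|\bfv_n\|_{\nabla,p,h_{m_n}}<\infty$, $\bfv_n \weakto \bfv\in\Vo(0)$ in the sense of Proposition~\ref{3.2}, and $\limsup_n \langle S^{\textup{\textsf{\textbf{\tiny SIP}}}}_{m_n}\bfv_n, \bfv_n - \bfv\rangle \leq 0$. Along a not-relabeled subsequence, I would extract weak limits $\Dhnk \bfv_n \weakto \widetilde{\bfD}$ and $\Rhnks \bfv_n \weakto \bfR$ in $L^p(\Omega)^{d\times d}$. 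Testing the defining identity \eqref{eq:2.25.1} for $\Rhnks$ against an arbitrary $\bfX\in C^\infty_0(\Omega)^{d\times d}$, using that $\bfv\in \Vo(0)$ is continuous across faces so $\jump{\bfv\otimes\bfn}=\mathbf{0}$, and exploiting $\|\bfX-\PiDGn\bfX\|_{p',\Gamma_{h_n}}\to 0$ as in Proposition~\ref{3.3}, forces $\bfR=\mathbf{0}$ and hence, via \eqref{eq:defD}, $\Dhnk \bfv_n \weakto \bfD\bfv$. A Minty-type argument based on \eqref{1.4b} and the $\limsup$-hypothesis, together with the nonnegativity of the jump-penalty contribution, then yields $\Dhnk \bfv_n \to \bfD\bfv$ a.e.~after extraction, so that $\SSS(\Dhnk \bfv_n)\weakto \SSS(\bfD\bfv)$ in $L^{p'}(\Omega)^{d\times d}$ by continuity and equi-integrability; the desired pseudo-monotonicity inequality then follows by passing to the liminf in each term. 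The main obstacle is the non-monotone lifting term $-(\SSS(\Rhnks \bfv_n),\Rhnks \bfv_n)$: since $\Rhnks \bfv_n$ converges only weakly, not strongly, to zero, one cannot directly conclude that this contribution vanishes. The resolution, parallel to the coercivity argument in (ii), is to absorb its modulus into the positive jump-penalty term using the margin $\alpha>\alpha_0$, so that the effective monotonic part of $S^{\textup{\textsf{\textbf{\tiny SIP}}}}_n$ retains enough monotonicity to drive the Minty step; the non-conforming ingredients (varying discrete subspaces, discrete Korn, and test-function approximation) are handled exactly as in Proposition~\ref{ldg}.
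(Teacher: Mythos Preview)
Your handling of (i) and (ii) matches the paper's approach, with one slip: $S^{\textup{\textsf{\textbf{\tiny SIP}}}}_n$ is \emph{not} monotone on $\WDGn$ --- the subtracted lifting term $-(\SSS(\Rhnks\bfw_{h_n}),\Rhnks\bfz_{h_n})$ spoils monotonicity --- so your derivation of pseudo-monotonicity in (i) from ``monotonicity plus continuity'' does not go through. (The paper's own ``and, thus, pseudo-monotone'' is equally imprecise; what the abstract framework actually uses is only (\hyperlink{AN.5}{AN.5}), pseudo-monotonicity of the restriction to the finite-dimensional $\Vhnk(0)$, where continuity suffices.)

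For (iii) your route is valid but differs from the paper's. You isolate $(\SSS(\Dhnk\bfv_n),\Dhnk\bfv_n-\bfD\bfv)$ by discarding the nonnegative jump-plus-lifting block (using $\alpha>\alpha_0$ and $\jump{\bfv\otimes\bfn}=\mathbf{0}$), invoke \eqref{1.4b} to drive the $\phi_{\abs{\bfD\bfv}}$-modular of $\Dhnk\bfv_n-\bfD\bfv$ to zero, extract a.e.\ convergence, and finish via equi-integrability. The paper never establishes strong convergence: for $\bfz\in\Vo(0)$ and $s\in(0,1)$ it sets $\bfz_s\coloneqq (1-s)\bfv+s\bfz$, notes that $\jump{\bfz_s\otimes\bfn}=\mathbf{0}$ kills both the jump and the lifting contributions of $\bfz_s$, and combines the nonnegativity of the jump-plus-lifting block (as in (ii)) with \eqref{1.4b} to obtain $\langle S^{\textup{\textsf{\textbf{\tiny SIP}}}}_n\bfv_n - S^{\textup{\textsf{\textbf{\tiny SIP}}}}_n\bfz_s,\bfv_n-\bfz_s\rangle_{\smash{\WDGn}}\ge 0$; a standard Minty rearrangement, the weak convergence $\Dhnk\bfv_n\weakto\bfD\bfv$, and $s\to 0$ conclude. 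Your approach yields the bonus that $\Dhnk\bfv_n\to\bfD\bfv$ in measure, at the price of a Vitali step; the paper's $\bfz_s$-Minty trick is shorter and stays purely at the weak level.
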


\begin{proof}
\textit{ad (i).} Well-definedness, boundedness and
continuity follow from the theory of Nemytski\u{\i}
operators. For the desired inequality, we proceed for the
first two terms as in Proposition~\ref{ldg}.  For the third
term, we use the $\vep$-Young inequality \eqref{ineq:young} with $\psi=\varphi$,
\eqref{1.5b}, $\varphi^*(\varphi '(t)) \sim \varphi (t)$
uniformly in $t\ge 0$, the estimate
\begin{align}
\int_{\Omega}\phi (\Rhk\bfw_h)\, \textup{d}x\leq
c_2\,h\,\int_{\Gamma_h}\hspace*{-2mm}\phi
(h^{-1}\jump{\bfw_h\otimes\bfn})\, \textup{d}s\,,\label{eq:rh}
\end{align}
for all
$\bfw_{h_n}\!\!\in\!
\WDGn$~and~${n\!\in\!
\mathbb{N}}$, where $c_2\!>\!0$ depends~on~$\ell$,~the characteristics
of $\SSS$ and $\omega_0$~(cf.~\mbox{\cite[\hspace*{-1mm}(A.25)]{dkrt-ldg}}), that
${\phi(t)\le c\,t^p +c\,\delta^p}$
for~every~${t,\delta\ge 0}$, where $c>0$ depends on
$p\in (1,\infty)$, and that ${h_n\mathscr{H}^{d-1}(\Gamma_{h_n})\leq c\,\vert
\Omega\vert }$, where $c>0$ depends on
$\omega_0>0$, to obtain
\begin{align*}
\bigabs{\big(\SSS(\Rhnks\bfw_{h_n}),\Rhnks\bfz_{h_n}\big)}
&\le \vep \, \int_\Omega \varphi^*(\abs{\SSS(\Rhnks\bfw_{h_n})})\,
\textup{d}x+ c_\vep \, \int_\Omega \varphi(\abs{\Rhnks\bfz_{h_n}})\,
\textup{d}x
\\
&\le \vep \,\int_\Omega \varphi(\abs{\Rhnk\bfw_{h_n})})\,
\textup{d}x+c_\vep \, \int_\Omega \varphi(\abs{\Rhnks\bfz_{h_n}})\,
\textup{d}x
\\
&\le \vep \,  c_2\,h\,\int_{\Gamma_{h_n}}\hspace*{-2mm}\phi
(h^{-1}\jump{\bfw_{h_n}\otimes\bfn})\, \textup{d}s +
c_\vep \,  c_2\,h\,\int_{\Gamma_{h_n}}\hspace*{-2mm}\phi
(h^{-1}\jump{\bfz_{h_n}\otimes\bfn})\, \textup{d}s 
\\
& \le \vep \, c\,
\|\bfw_{h_n}\|_{\nabla,p,h_n}^p+c_{\varepsilon}\,(\delta^p
\, \abs{\Omega}+\|\bfz_{h_n}\|_{\nabla,p,h_n}^p)\,.
\end{align*}

\textit{ad (ii).} Combining \eqref{eq:ass_S},
$\varphi^*(\varphi '(t)) \le \Delta_2(\varphi)\, \varphi (t)$
uniformly in $t\ge 0$, 
\eqref{eq:rh}, 
and that $\phi(t)\ge c\,t^p-c\,\delta^p$
for~every~${t,\delta\ge 0}$, we obtain for almost
every $t\in I$ and every $\bfv_{h_n}\in \Vhnk(0)$,
$n\in \mathbb{N}$, also using that
$h_n\mathscr{H}^{d-1}(\Gamma_{h_n})\leq c\,\vert
\Omega\vert $, where $c>0$~depends~on~$\omega_0>0$, that
\begin{align}
\begin{aligned}
&\alpha\big\langle
\SSS(h_n^{-1}\jump{\bfv_{h_n}\otimes\bfn}),
\jump{\bfv_{h_n}\otimes\bfn}\big\rangle_{\Gamma_{h_n}}
-\big(\SSS(\Rhnks\bfv_{h_n}),\Rhnks\bfv_{h_n}\big)
\\
&\ge \alpha\,
C_0\,h_n\,\rho_{\varphi,\Gamma_{h_n}}(h_n^{-1}\jump{\bfv_{h_n}\otimes\bfn})-C_1\, \Delta_2(\varphi)\,\rho_{\varphi,\Omega}(\Rhnks\bfv_{h_n})
\\
&\ge
(\alpha\,C_0-C_1\, \Delta_2(\varphi) \,c_2)\,h_n\,\rho_{\varphi,\Gamma_{h_n}}(h_n^{-1}\jump{\bfv_{h_n}\otimes\bfn})
\\
&\ge c\, (\alpha\,C_0-C_1\, \Delta_2(\varphi) \,c_2)\,h_n\,\|h_n^{-1}\jump{\bfv_{h_n}\otimes\bfn}\|_{p,\Gamma_{h_n}}^p- c\, (\alpha\,C_0-C_1\, \Delta_2(\varphi) \,c_2)\,\delta^p\,h_n\,\mathscr{H}^{d-1}(\Gamma_{h_n})
\\
&\ge  c\, (\alpha\,C_0-C_1\, \Delta_2(\varphi) \,c_2)\,h_n\,\|h_n^{-1}\jump{\bfv_{h_n}\otimes\bfn}\|_{p,\Gamma_{h_n}}^p-c\,\delta^p\,\vert\Omega\vert
\,.
\end{aligned}\label{eq:SIP1}
\end{align}
Similarly, we obtain that
$(\SSS(\Dhnk\!\bfv_{h_n}),\Dhnk\!\bfv_{h_n}) \ge
c\,{C_0}\,\|\Dhnk\!\bfv_{h_n}\|_p^p -c\,\delta^p\vert
\Omega\vert$. Thus, using the Korn~\mbox{inequality}
(cf.~Proposition \ref {korn}), the assertion follows,
if $\alpha > \frac {C_1\, \Delta_2(\varphi)\, c_2}{C_0}=\vcentcolon\alpha_0$.\\[-3mm]

\textit{ad (iii).} 		Without loss of generality, we assume that $m_n=n$ for
every $n\in \mathbb{N}$ in Definition
\ref{3.4.0}. Therefore, let $\bfv_{h_n}\in V_{n}^\ell(0)$,
$n\in \mathbb{N}$,
be a sequence~such~that
\begin{gather}
\sup_{n\in \mathbb{N}}{\|\bfv_{h_n}\|_{\nabla,p,h_{n}}}<\infty\,,
\qquad\bfv_{h_n}\;\;\weakto\;\;\bfv\quad\text{ in }L^2(\Omega)\quad(n\to \infty)\,,\label{eq:SIP.3}
\\
\limsup_{n\to\infty}{\langle S^{\textup{\textsf{\textbf{\tiny SIP}}}}_{n}\bfv_{h_n}\,,
\bfv_{h_n}-\bfv\rangle_{W^{1,p}(\mathcal{T}_{h_{n}})}}\leq 0\,.\label{eq:SIP.4}
\end{gather}
Proposition \ref{3.2} and \eqref{eq:SIP.3} immediately
yield $\bfv\in \Vo(0)$.  In addition, on the
basis of \cite[Lemma~A.37]{kr-phi-ldg},
\eqref{eq:SIP.3} implies that
\begin{align}\label{eq:weak}
\Dhnk\!\bfv_{h_n}\rightharpoonup\bfD\bfv\quad\text{ in }L^p(\Omega)\quad(n\to \infty)\,.
\end{align}
For arbitrary $\bfz\hspace{-0.05em}\in\hspace{-0.05em} \Vo(0)$ and $s\hspace{-0.05em}\in\hspace{-0.05em}
\left(0,1\right)$, we set ${\bfz_s\hspace{-0.05em}\coloneqq \hspace{-0.05em}(1-s)\bfv+s\bfz\hspace{-0.05em}\in\hspace{-0.05em} \Vo(0)}$.
Observing~that~${\jump{\bfz_s\otimes \bfn}\hspace{-0.05em}=\hspace{-0.05em}\mathbf{0}}$~on~$\Gamma_{h_n}$ and, thus, $\Rhnks\bfz_s=\mathbf{0}$ in 
$L^p(\Omega)$, inasmuch as $\bfz_s\in \Vo(0)$, and that the third line in \eqref{eq:SIP1} 
is non-negative, for $\alpha >\alpha_0$, we deduce that
\begin{align*}
&\alpha\big\langle \SSS(h_n^{-1}\jump{\bfv_{h_n}\otimes\bfn})
-\SSS(h_n^{-1}\jump{\bfz_s\otimes\bfn}),
\jump{(\bfv_{h_n}-\bfz_s)\otimes\bfn}\big\rangle_{\Gamma_{h_n}}
\\&\quad-\big(\SSS(\Rhnks\bfv_{h_n})-\SSS(\Rhnks\bfz_s),
\Rhnks(\bfv_{h_n}-\bfz_s)\big)
\\&= \alpha\big\langle \SSS(h_n^{-1}\jump{\bfv_{h_n}\otimes\bfn}),
\jump{\bfv_{h_n}\otimes\bfn}\big\rangle_{\Gamma_{h_n}}
-\big(\SSS(\Rhnks\bfv_{h_n}),\Rhnks\bfv_{h_n}\big)
\ge 0\,.
\end{align*}
This and the the monotonicity condition \eqref{1.4b} imply that
\begin{align*}
\langle S^{\textup{\textsf{\textbf{\tiny
			SIP}}}}_n\bfv_{h_n}- S^{\textup{\textsf{\textbf{\tiny
			SIP}}}}_n\bfz_s,\bfv_{h_n}-\bfz_s\rangle_{\smash{\WDGn}}\ge0\,.
\end{align*}
Using standard arguments (cf.~\cite{zei-IIB}), we infer from this
\begin{align}
&s\langle S^{\textup{\textsf{\textbf{\tiny
			SIP}}}}_n\bfv_{h_n},\bfv_{h_n}-\bfz\rangle_{\smash{\WDGn}}			\label{eq:SIP.7}
\\&\ge -(1-s)\langle
S^{\textup{\textsf{\textbf{\tiny
			SIP}}}}_n\bfv_{h_n},\bfv_{h_n}-\bfv\rangle_{\smash{\WDGn}}
+(1-s)\langle
S^{\textup{\textsf{\textbf{\tiny SIP}}}}_n\bfz_s,
\bfv_{h_n}-\bfv\rangle_{\smash{\WDGn}}
+s\langle S^{\textup{\textsf{\textbf{\tiny
			SIP}}}}_n\bfz_s,\bfv_{h_n}-\bfz\rangle_{\smash{\WDGn}} \notag
\\[-0.5mm]&\ge -(1-s)\langle
S^{\textup{\textsf{\textbf{\tiny
			SIP}}}}_n\bfv_{h_n},\bfv_{h_n}-\bfv\rangle_{\smash{\WDGn}}
+(1-s)\big(\SSS(\bfD\bfz_s),
\Dhnk\!\bfv_{h_n}-\bfD\bfv\big)
+s\big(\SSS(\bfD\bfz_s),
\Dhnk\!\bfv_{h_n}-\bfD\bfz\big)\,,\notag
\end{align}
where we also used $\langle \SSS(h_n^{-1}\jump{\bfz_s\otimes \bfn}),
\jump{(\bfv_{h_n}-\bfz)\otimes\bfn}\rangle_{\Gamma_{h_n}}
\hspace{-0.1em}=\hspace{-0.1em}(
\SSS(\Rhnks\bfz_s,\Rhnks(\bfv_{h_n}-\bfz))\hspace{-0.1em}=\hspace{-0.1em}0$
for ${\bfz \hspace{-0.1em}\in\hspace{-0.1em} \Vo(0)}$.
Dividing by $s>0$, taking the limit inferior on both sides 
in \eqref{eq:SIP.7} with respect to $n\in \mathbb{N}$ and using 
\eqref{eq:SIP.4}, \eqref{eq:weak}, eventually, yields for every $\bfz\in \Vo(0)$
\begin{align*}
\begin{aligned}
\liminf_{n\to \infty}{\langle S^{\textup{\textsf{\textbf{\tiny SIP}}}}_n\bfv_{h_n},
	\bfv_{h_n}-\bfz\rangle_{\smash{\WDGn}}}\ge\langle S\bfz_s,\bfv-\bfz\rangle_{\Vo(0)}
\overset{s\to 0}{\to} \langle S\bfv,\bfv-\bfz\rangle_{\Vo(0)}\,.
\end{aligned}
\end{align*}
\end{proof}
\begin{proposition}[Convective term I, cf.~\cite{EP12}]\label{3.4.1}
Let $d\in \{2,3\}$, $\ell \in \setN$ and  $p>\frac{3d+2}{d+2}$.
Moreover, for every $n\in \mathbb{N}$, let ${B_{n}^{\textup{I}} \colon\WDGn\to \WDGn^*}$, for every $\bfw_{h_n},\bfz_{h_n}\in \WDGn$, be defined via
\begin{align*}
\langle B_{n}^{\textup{I}}\bfw_{h_n},\bfz_{h_n}\rangle_{\smash{\WDGn}}
&\coloneqq ([\nabla_{h_n}\!\bfw_{h_n}]\bfw_{h_n},\PiDGn\!\bfz_{h_n})+\tfrac{1}{2}((\mathrm{div}_{h_n}\!\bfw_{h_n})\bfw_{h_n},\PiDGn\!\bfz_{h_n})
\\&\quad-\langle\jump{\PiDGn\bfw_{h_n}\otimes\bfn},
\{\PiDGn\bfw_{h_n}\}\otimes\{\PiDGn\!\bfz_{h_n}\}\rangle_{\smash{\Gamma_{h_n}^{i}}}
\\&\quad-\tfrac{1}{2}\langle \tr \jump{ \PiDGn\bfw_{h_n}\otimes\bfn},
\{\PiDGn\bfw_{h_n}\cdot\PiDGn\!\bfz_{h_n}\}\rangle_{\Gamma_{h_n}}\,.
\end{align*}
Then, it holds:
\begin{itemize}
\item[(i)] For every $n\in \mathbb{N}$, the operator $B_{n}^{\textup{I}} \colon\WDGn\to \WDGn^*$ is well-defined, 
bounded and continuous. In addition, for every
$p$ there exist $q\in \left(1,\infty\right)$
such that for every $\varepsilon>0$ there
exists $c_\vep>0$, depending only on
$\vep^{-1}$, $\ell$, $p$
and $\omega_0$, such that for all  $n\in
\mathbb{N}$ and all $\bfw_{h_n},\bfz_{h_n}\in \WDGn$,~it~holds
\begin{align*}
\vert \langle B_{n}^{\textup{I}}\bfw_{h_n},\bfz_{h_n}\rangle_{\smash{\WDGn}}\vert
\leq \varepsilon\,\|\bfw_{h_n}\|_{\nabla,p,h_n}^p
+c_{\varepsilon}\,\big
(1+\|\bfw_{h_n}\|_2^q+\|\bfz_{h_n}\|_2^q+\|\bfz_{h_n}\|_{\nabla,p,h_n}^p\big
)\,.
\end{align*}
\item[(ii)] For every $n\in \mathbb{N}$ and
$\bfv_{h_n}\in \Vhnk $, it 
holds  $\langle B_{n}^{\textup{I}} \bfv_{h_n},\bfv_{h_n}\rangle_{\smash{\WDGn}}=0$.
\item[(iii)] The operators $B_{n}^{\textup{I}} \colon\WDGn\to \WDGn^*$, $n\in\mathbb{N}$, are 
non-conforming pseudo-monotone with respect to $(\Vhnk(0))_{n\in \mathbb{N}}$ 
and $B\colon\Vo(0)\to \Vo(0)^*$, for every $\bfv,\bfz\in \Vo(0)$, defined via $\langle B\bfv,\bfz\rangle_{\Vo(0)}
\coloneqq ([\nabla\bfv]\bfv,\bfz)$.
\end{itemize}
\end{proposition}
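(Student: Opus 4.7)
My plan is to handle the three claims in turn, with most of the work reserved for (iii). Claim (i) will follow from Hölder's inequality combined with DG trace/inverse estimates and an interpolation inequality --- it is here that the restriction $p>\frac{3d+2}{d+2}$ plays its decisive role --- while (ii) is a consequence of a built-in cancellation in the four-term definition of $B_n^{\textup{I}}$. The non-conforming pseudo-monotonicity (iii) will rely on a Rellich-type DG compactness result together with careful term-by-term limit passages.

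For (i), I would bound each term by Hölder's inequality. The first volume term $([\nabla_{h_n}\bfw_{h_n}]\bfw_{h_n},\PiDGn\bfz_{h_n})$ is estimated with exponents $(p,2p',2)$, combined with the discrete Sobolev interpolation
\begin{align*}
\|\bfw_{h_n}\|_{2p'}\leq c\,\|\bfw_{h_n}\|_2^{1-\theta}\,\|\bfw_{h_n}\|_{\nabla,p,h_n}^{\theta}\,,\qquad \theta=\tfrac{d}{p(d+2)-2d}\,,
\end{align*}
available in the DG setting via Korn, the inverse/trace machinery, and the classical embedding $W^{1,p}\hookrightarrow L^{p^*}$. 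Young's inequality then yields the desired bound precisely when $2p'\theta<p$, i.e.\ exactly when $p>\frac{3d+2}{d+2}$. The remaining three terms (the divergence term and the two boundary jump terms) are handled analogously using the stability $\|\PiDGn\bfz_{h_n}\|_{\nabla,p,h_n}\lesssim\|\bfz_{h_n}\|_{\nabla,p,h_n}$ and DG trace inequalities, absorbing any lower-order contribution into the $\|\cdot\|_2^q$ term for a suitable $q\geq 1$. Continuity on the finite-dimensional $\WDGn$ then follows from standard Nemytski\u\i\ arguments.

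For (ii), let $\bfv_{h_n}\in\Vhnk$. Then $\PiDGn\bfv_{h_n}=\bfv_{h_n}$, and elementwise integration by parts on the first two volume terms gives
\begin{align*}
([\nabla_{h_n}\bfv_{h_n}]\bfv_{h_n},\bfv_{h_n})+\tfrac{1}{2}((\mathrm{div}_{h_n}\bfv_{h_n})\bfv_{h_n},\bfv_{h_n})=\tfrac{1}{2}\sum_{K\in\mathcal{T}_{h_n}}\int_{\partial K}|\bfv_{h_n}|^2\,\bfv_{h_n}\cdot\bfn_K\,\mathrm{d}s\,.
\end{align*}
Expressing the right-hand side via the standard DG jump/average identities over $\Gamma_{h_n}$ then precisely cancels the remaining two boundary terms in the definition of $B_n^{\textup{I}}$, yielding zero.

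Claim (iii) is the main obstacle. Assume $\bfv_{h_n}\in V_{h_n}^\ell(0)$ satisfies $\sup_n\|\bfv_{h_n}\|_{\nabla,p,h_n}<\infty$ and $\bfv_{h_n}\weakto\bfv$ in $L^2(\Omega)$. By Proposition \ref{3.3} (applied via Proposition \ref{3.2}), $\bfv\in\Vo(0)$ and $\Ghnk\bfv_{h_n}\weakto\nabla\bfv$ in $L^p(\Omega)$. The key additional ingredient is a Rellich-type compactness theorem for discretely divergence-free DG spaces, providing (along a subsequence) $\bfv_{h_n}\to\bfv$ strongly in $L^r(\Omega)$ for some $r>2p'$; this is compatible with $p>\frac{3d+2}{d+2}$, which precisely ensures $p^*>2p'$. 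For an arbitrary $\bfw\in\Vo(0)$, the limit of $\langle B_n^{\textup{I}}\bfv_{h_n},\bfw\rangle_{\WDGn}$ can then be evaluated term-by-term: the two boundary jump contributions vanish thanks to the $h_n^{1-1/p}$-scaling of $\|\jump{\PiDGn\bfv_{h_n}\otimes\bfn}\|_{p,\Gamma_{h_n}}$ (as already exploited in the proof of Proposition \ref{3.3}), while the volume terms converge to $([\nabla\bfv]\bfv,\bfw)$ and $\frac{1}{2}((\mathrm{div}\bfv)\bfv,\bfw)=0$ respectively. Combined with (ii) and the identity $\langle B\bfv,\bfv\rangle_{\Vo(0)}=0$, this gives $\langle B_n^{\textup{I}}\bfv_{h_n},\bfv_{h_n}-\bfw\rangle_{\WDGn}\to\langle B\bfv,\bfv-\bfw\rangle_{\Vo(0)}$, establishing the non-conforming pseudo-monotonicity inequality (in fact as an equality in the liminf). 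The principal technical difficulty is proving the DG compactness together with the uniform-in-$n$ control of the boundary jump contributions arising from $\PiDGn$.
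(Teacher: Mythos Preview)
Your handling of (i) and (ii) is essentially aligned with the paper (modulo the typo in the H\"older exponents---one needs $(p,2p',2p')$, not $(p,2p',2)$, and the paper applies the interpolation between $L^2$ and $L^{p^*}$ to \emph{both} $\bfw_{h_n}$ and $\bfz_{h_n}$). The substantive gap is in (iii).

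Your term-by-term limit passage in the \emph{original} four-term definition of $B_n^{\textup{I}}$ does not work. The volume terms involve the \emph{local} gradient $\nabla_{h_n}\bfv_{h_n}$ and $\mathrm{div}_{h_n}\bfv_{h_n}$, and these do \emph{not} converge weakly to $\nabla\bfv$ and $\mathrm{div}\,\bfv$: only the DG gradient $\Ghnk\bfv_{h_n}=\nabla_{h_n}\bfv_{h_n}-\Rhnk\bfv_{h_n}$ does, and the lifting $\Rhnk\bfv_{h_n}$ is merely bounded in $L^p$, not vanishing. Simultaneously, the boundary terms in the original definition pair $\jump{\bfv_{h_n}\otimes\bfn}$ with \emph{averages} $\{\bfv_{h_n}\},\{\PiDGn\bfz\}$; a scaling count shows that the $h_n^{1-1/p}$ decay of the jump is exactly cancelled by the $h_n^{-1/p'}$ blow-up from the discrete trace of the two averages, so these terms are bounded but do not tend to zero either. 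In short, neither side of your split converges to what you claim.

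What the paper does instead is invoke an algebraic rewriting identity (from \cite[Lemma 6.44]{EP12}) valid for $\bfz\in\Vo(0)$:
\begin{align*}
\langle B_{n}^{\textup{I}}\bfv_{h_n},\bfz\rangle_{\WDGn}
=\big(\big[\boldsymbol{\mathcal{G}}_{h_n,i}^{2\ell}\bfv_{h_n}\big]\bfv_{h_n},\PiDGn\bfz\big)
+\tfrac{1}{2}\big((\mathcal{D}\dot{\iota}\nu_{h_n}^{2\ell}\bfv_{h_n})\bfv_{h_n},\PiDGn\bfz\big)
+\tfrac{1}{4}\langle\jump{\bfv_{h_n}\otimes\bfn},\jump{\bfv_{h_n}}\otimes\jump{\PiDGn\bfz}\rangle_{\Gamma_{h_n}^{i}}\,.
\end{align*}
This reorganization is the missing key idea: the volume terms now carry the (interior) DG gradient $\boldsymbol{\mathcal{G}}_{h_n,i}^{2\ell}\bfv_{h_n}$, which \emph{does} converge weakly to $\nabla\bfv$, and the single remaining boundary term now contains $\jump{\bfv_{h_n}}$ \emph{twice} together with $\jump{\PiDGn\bfz}$ (a jump, not an average), which supplies enough positive powers of $h_n$ to force it to zero. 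The rest of your strategy---discrete Rellich--Kondrachov, the identities $\langle B_n^{\textup{I}}\bfv_{h_n},\bfv_{h_n}\rangle=0=\langle B\bfv,\bfv\rangle$---is correct and is exactly how the paper concludes.
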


\begin{proof} We only give a proof for the case $d=3$, since the case $d=2$ 
simplifies due to better~Sobolev~embeddings (cf.~\cite[Proposition 2.6]{kr-pnse-ldg-3}).

\textit{ad (i).} The well-definedness, boundedness and
continuity of the operator $B_{n}^{\textup{I}}$ follows in~the~same~way~as~in \cite[\hspace{-0.1mm}Lemma \hspace{-0.1mm}6.40]{EP12}.
\hspace{-0.5mm}The \hspace{-0.1mm}stability \hspace{-0.1mm}property \hspace{-0.1mm}of \hspace{-0.1mm}$\PiDGn$ \!(cf.~\!\cite[\hspace{-0.1mm}(A.11), \hspace{-0.1mm}(A.18)]{dkrt-ldg})
\hspace{-0.1mm}yields,~\hspace{-0.1mm}for~\hspace{-0.1mm}every~\hspace{-0.1mm}${\bfw_{h_n},\bfz_{h_n}\!\in\! \WDGn} $, that
\begin{align}
\begin{aligned}
&\abs{([\nabla_{h_n}\!\bfw_{h_n}]\bfw_{h_n},\PiDGn\!\bfz_{h_n})}
+\abs{((\mathrm{div}_{h_n}\!\bfw_{h_n})\bfw_{h_n},\PiDGn\!\bfz_{h_n})}
\\
&\leq c\,\|\bfw_{h_n}\|_{2p'}\|\smash{\PiDGn}\!\bfz_{h_n}\|_{2p'}
\|\nabla_{h_n}\!\bfw_{h_n}\|_p\,\\&\leq 
c\,\|\bfw_{h_n}\|_{2p'}\|\bfz_{h_n}\|_{2p'}
\|\bfw_{h_n}\|_{\nabla,p,h_n}\,.
\end{aligned}\label{eq:3.4.1.c}
\end{align}
To treat the remaining two terms in the definition of
$B_{n}^{\textup{I}}$, we use the discrete trace inequality, 
i.e., it holds 
$\smash{h_n \|\bfu_{h_n}\|_{p',\gamma}^{p'}\leq c\, \|\bfu_{h_n}\|_{p',K}^{p'}}$ for all $\bfu_{h_n}\in \Vhnk$
(cf.~\cite[Lemma A.16]{kr-phi-ldg}), H\"older
inequality, and the $L^{2p'}$- and the DG-stability properties of 
$\PiDGn$ (cf.~\cite[(A.11), (A.18)]{dkrt-ldg}), 
to obtain for~every ${\bfw_{h_n},\bfz_{h_n}\in \WDGn}$
\begin{align}\label{eq:3.4.1.h}
\begin{aligned}
&\abs{\langle\jump{\PiDGn\!\bfw_{h_n}\otimes \bfn},
	\avg{\PiDGn\!\bfw_{h_n}}\otimes\avg{\PiDGn\!\bfz_{h_n}}\rangle_{\smash{\Gamma_{h_n}^{i}}}}
+\abs{\langle\tr \jump{\PiDGn\!\bfw_{h_n}\otimes\bfn},
	\avg{\PiDGn\!\bfw_{h_n}\cdot\PiDGn\!\bfz_{h_n}}\rangle_{\smash{\Gamma_{h_n}}}}
\\
&\leq c\,{h_n^{\frac{1}{p}}}\|h_n^{-1}\jump{\PiDGn\!\bfw_{h_n}\otimes \bfn}\|_{p,\Gamma_{h_n}^{i}}
{h_n^{\frac{1}{p'}}}\|\avg{\PiDGn\!\bfw_{h_n}}\otimes
\avg{\PiDGn\!\bfz_{h_n}}\|_{p',\Gamma_{h_n}^{i}}
\\
&\quad+c\,{h_n^{\frac{1}{p}}}\|h_n^{-1}\jump{\PiDGn\!\bfw_{h_n}\otimes \bfn}\|_{p,\Gamma_{h_n}}
{h_n^{\frac{1}{p'}}}\|\avg{\PiDGn\!\bfw_{h_n}\cdot \PiDGn\!\bfz_{h_n}}\|_{p',\Gamma_{h_n}}
\\
&\leq c\,\|\PiDGn\!\bfw_{h_n}\|_{\nabla,p,h_n}
\|\PiDGn\!\bfw_{h_n}\|_{2p'}
\|\PiDGn\!\bfz_{h_n}\|_{2p'}
\\
&\leq c\,\|\bfw_{h_n}\|_{\nabla,p,h_n}
\|\bfw_{h_n}\|_{2p'}		\|\bfz_{h_n}\|_{2p'}\,.
\end{aligned}
\end{align}
Thus, for every $\bfw_{h_n},\bfz_{h_n}\in \WDGn$, we proved that
\begin{align*}
\vert\langle B_{n}^{\textup{I}} \bfw_{h_n},\bfz_{h_n}\rangle_{\smash{\WDGn}}\vert
\le  c\,\|\bfw_{h_n}\|_{\nabla,p,h_n}
\|\bfw_{h_n}\|_{2p'}		\|\bfz_{h_n}\|_{2p'}\,.
\end{align*}
To proceed, we distinguish between the cases $p\ge 3$
and $p<3$:

\textit{Case $p\in [3,\infty)$.} If $p\ge 3$, then
$2p'\leq p$ and $\frac{p-2}{2}\le p$. Thus, we have that  $a^{\frac{p-2}{2}}\leq 1+a^p$ for all
$a\ge 0$ and the validity of  the embedding $\WDGn \vnor
L^{2p'}(\Omega)$ (cf.~\cite[Proposition
2.6]{kr-pnse-ldg-3}) holds.~Using~these~results and the
$\varepsilon$-Young inequality \eqref{ineq:young} with $\psi=\vert
\cdot\vert^{\frac p2}$, we deduce that 
\begin{align}
\begin{aligned}
\|\bfw_{h_n}\|_{2p'}\|\bfz_{h_n}\|_{2p'}
\|\bfw_{h_n}\|_{\nabla,p,h_n}&\leq c\,\|\bfw_{h_n}\|_{\nabla,p,h_n}^2\|\bfz_{h_n}\|_{\nabla,p,h_n}
\\
&\leq {\varepsilon}\,\|\bfw_{h_n}\|_{\nabla,p,h_n}^{p}+c_\varepsilon\,\smash{\|\bfz_{h_n}\|_{\nabla,p,h_n}^{\frac{p-2}{2}}}
\\
&\leq 		\varepsilon\,\|\bfw_{h_n}\|_{\nabla,p,h_n}^p +c_{\varepsilon}\,(1+\|\bfz_{h_n}\|_{\nabla,p,h_n}^p)\,.
\end{aligned}\label{eq:3.4.1.d}
\end{align}

\textit{Case $p\!\in\! (\frac{11}{5},3)$.} \hspace*{-0.1mm}If \hspace*{-0.1mm}$p\!\in\! (\frac{11}{5},3)$, \hspace*{-0.1mm}then, \hspace*{-0.1mm}by \hspace*{-0.1mm}interpolation \hspace*{-0.1mm}with 
\hspace*{-0.1mm}$\frac{1}{\rho}\!=\!\frac{1-\theta}{p^*}+\frac{\theta}{2}$, 
\hspace*{-0.1mm}where \hspace*{-0.1mm}$\rho\!=\!\frac{5p}{3}$,~\hspace*{-0.1mm}${\theta\!=\!\frac{2}{5}}$~\hspace*{-0.1mm}and~\hspace*{-0.1mm}${p^*\!=\!\frac{3p}{3-p}}$,
and \cite[Proposition 2.6]{kr-pnse-ldg-3}, we obtain
\begin{align*}
\|\bfw_{h_n}\|_{\rho}\leq \|\bfw_{h_n}\|_2^\frac{2}{5}\|\bfw_{h_n}\|_{p^*}^{\frac{3}{5}}
\leq  c\,\|\bfw_{h_n}\|_2^\frac{2}{5}\|\bfw_{h_n}\|_{\nabla,p,h_n}^{\frac{3}{5}}\,. 
\end{align*}
Using this, $\rho\ge 2p'$, the $\varepsilon$-Young inequality \eqref{ineq:young} with $\psi=\vert\cdot\vert^{5p/3}$, we conclude that
\begin{align}
\begin{aligned}
\|\bfw_{h_n}\|_{2p'}\|\bfz_{h_n}\|_{2p'}
\|\bfw_{h_n}\|_{\nabla,p,h_n}&\leq c\,\|\bfw_{h_n}\|_2^\frac{2}{5}\|\bfw_{h_n}\|_{\nabla,p,h_n}^{\frac{8}{5}}
\|\bfz_{h_n}\|_2^\frac{2}{5}\|\bfz_{h_n}\|_{\nabla,p,h_n}^{\frac{3}{5}}
\\[-1mm]&\leq c\,\|\bfw_{h_n}\|_2^\frac{2p}{5p-3}\|\bfw_{h_n}\|_{\nabla,p,h_n}^{\frac{8p}{5p-3}}
\|\bfz_{h_n}\|_2^{\frac{2p}{5p-3}}+c\,\|\bfz_{h_n}\|_{\nabla,p,h_n}^{p}\,.
\end{aligned} \label{eq:3.4.1.f}
\end{align}
For $q\coloneqq \smash{2(\frac{5p-3}{8})'\frac{2p}{5p-3}}$, due to $\smash{\frac{8p}{5p-3}}<p$ for $p>\frac{11}{5}$, using
the $\varepsilon$-Young inequality \eqref{ineq:young} with $\psi=\vert\cdot\vert^{(5p-3)/8}$ in \eqref{eq:3.4.1.f}, 
we obtain 
\begin{align}
\begin{aligned}
\|\bfw_{h_n}\|_{2p'}\|\bfz_{h_n}\|_{2p'}
\|\bfw_{h_n}\|_{\nabla,p,h_n}
&\leq c\,\varepsilon\,\|\bfw_{h_n}\|_{\nabla,p,h_n}^{p}
+c_{\varepsilon}\,(1+\|\bfw_{h_n}\|_2^q+\|\bfz\|_2^q
+\|\bfz_{h_n}\|_{\nabla,p,h_n}^{p})\,.
\end{aligned}\label{eq:3.4.1.g}
\end{align}
Thus, we proved that for every
$\varepsilon>0$, there exists $c_\vep>0$ such that for every $n\in \mathbb{N}$ 
and ${\bfw_{h_n},\bfz_{h_n}\!\!\in\! \WDGn}$, it~holds
\begin{align}
\vert\langle B_{n}^{\textup{I}} \bfw_{h_n},\bfz_{h_n}\rangle_{\smash{\WDGn}}\vert
\leq \varepsilon\,\|\bfw_{h_n}\|_{\nabla,p,h_n}^p
+c_{\varepsilon}\,(1+\|\bfw_{h_n}\|_2^q+\|\bfz_{h_n}\|_2^q+\|\bfz_{h_n}\|_{\nabla,p,h_n}^p)\,.\label{eq:3.4.1.j}
\end{align}

\textit{ad (ii).} Here, we refer to \cite[Lemma 6.39]{EP12}.\\[-3mm]

\textit{ad (iii).}  Without loss of generality we
assume that $m_n=n$ for every $n\in \mathbb{N}$ in
Definition \ref{3.4.0}. Therefore, let
$\bfv_{h_n}\in V_{n}^\ell(0)$, $n\in \mathbb{N}$, be a
sequence~such~that
\begin{gather}
\sup_{n\in \mathbb{N}}{\|\bfv_{h_{n}}\|_{\nabla,p,h_n}}<\infty\,,
\qquad\bfv_{h_{n}}\weakto\bfv\quad\text{ in }L^2(\Omega)\quad(n\to \infty)\,,\label{eq:3.4.1.1}\\[-1mm]
\limsup_{n\to\infty}{\langle B_{n}^{\textup{I}} \bfv_{h_{n}},\bfv_{h_{n}}-\bfv\rangle_{W^{1,p}(\mathcal{T}_{h_n})}}\leq 0\,.
\label{eq:3.4.1.2}
\end{gather}
Proposition \ref{3.2} and \eqref{eq:3.4.1.1} gives us $\bfv\in \Vo(0)$. Moreover, due to 
\eqref{eq:3.4.1.1}, the discrete Rellich--Kondrachov theorem 
(cf.~\cite[Theorem 5.6]{EP12}) yields for $q<p^*$, with
$p^*\coloneqq \frac{3p}{3-p}$ if $p<3$ and
$p^*\coloneqq \infty$ if $p\ge 3$, that
\begin{align}\label{eq:3.4.1a}
\bfv_{h_n}\to\bfv\quad\text{ in }L^q(\Omega)\quad(n\to \infty)\,.
\end{align}
Next, we follow \cite[Lemma 6.44]{EP12}, 
where it is proved that for~every~${\bfz\in \Vo(0)}$~and~${n\in \mathbb{N}}$, it holds
\begin{align}
\begin{aligned}
\langle B_{n}^{\textup{I}}\bfv_{h_n},\bfz\rangle_{\WDGn}
&=\big(\big[\boldsymbol{\mathcal{G}}_{h_n,i}^{2\ell}\bfv_{h_n}\big]\bfv_{h_n},\PiDGn\!\bfz\big)
+\tfrac{1}{2}\big((\mathcal{D}\dot{\iota}\nu_h^{2\ell}\bfv_{h_n})\bfv_{h_n},\PiDGn\!\bfz\big)
\\&\quad+\tfrac{1}{4}\langle \jump{ \bfv_{h_n}\otimes\bfn},
\jump{ \bfv_{h_n}}\otimes\jump{ 
	\PiDGn\bfz}\rangle_{\smash{\Gamma_{h_n}^{i}}}\,,
\end{aligned}\label{eq:3.4.1.4}
\end{align}
where $\boldsymbol{\mathcal{G}}_{h_n,i}^{2\ell}\bfv_{h_n}\hspace{-0.1em}
\coloneqq \hspace{-0.1em}\nabla_{h_n}\!\bfv_{h_n}-\smash{\sum_{\gamma\in \Gamma_{h_n}^{i}}{\hspace{-0.5em}\boldsymbol{\mathcal{R}}_{h_n,\gamma}^{2\ell}\bfv_{h_n}}}\hspace{-0.2em}\in\hspace{-0.1em} X^{2\ell}_{h_n}$.
Using \eqref{eq:3.4.1.1}, 
we infer analogously to 
\mbox{Proposition}~\ref{3.3} (or \cite[Lemma 6.45]{EP12}) that
\begin{align}
\begin{aligned}
\begin{aligned}
	\boldsymbol{\mathcal{G}}_{h_n,i}^{2\ell}\bfv_{h_n}
	&\weakto \nabla \bfv&&
	\quad\text{ in }L^{p}(\Omega)&&\quad(n\to\infty)\,,\\
	\mathcal{D}\dot{\iota}\nu_h^{2\ell}\bfv_{h_n}&\weakto 
	\divo\bfv=0&&\quad\text{ in }L^p(\Omega)&&\quad(n\to \infty)\,.
\end{aligned}
\end{aligned}\label{eq:3.4.1.5}
\end{align}
Moreover, the $L^q$-approximation properties of 
$\PiDGn$ (cf.~\cite[Corollary A.8]{kr-phi-ldg}), for every 
$\bfz\in \Vo(0)$ and $q<p^*$, provide 
\begin{align}
\PiDGn\!\bfz\to
\bfz\quad\text{ in }L^q(\Omega)\quad(n\to \infty)\,.\label{eq:3.4.1.6}
\end{align}
Using \eqref{eq:3.4.1a}, \eqref{eq:3.4.1.5} and \eqref{eq:3.4.1.6}, 
for every $\bfz\in \Vo(0)$, we infer that
\begin{align}
\big(\big[\boldsymbol{\mathcal{G}}_{h_n,i}^{2\ell}\bfv_{h_n}\big]\bfv_{h_n},\PiDGn\!\bfz\big)
+\tfrac{1}{2}\big((\mathcal{D}\dot{\iota}\nu_h^{2\ell}\bfv_{h_n})\bfv_{h_n},\PiDGn\bfz\big)
\to([\nabla\bfv]\bfv,\bfz)\quad(n\to\infty)\,.\label{eq:3.4.1.3}
\end{align}
For the remaining term,  we distinguish 
between the cases $p\leq 3$ and $p>3$. 

\textit{Case $p\in (\frac{11}{5},3]$.}
If $p\in (\frac{11}{5},3)$, then, using
$\vert\jump{ \bfz}\vert 
=\vert \jump{\bfz\otimes \bfn}\vert$ on $\Gamma_h$ 
for  ${\bfz\in W^{1,p}(\mathcal{T}_{h_n})}$, 
the inequality $\smash{(\sum_{i=1}^{m} a_i^{2p'})^{\frac{1}{2p'}}\leq (\sum_{i=1}^{m} a_i^{p})^{\frac{1}{p}}}$
for each $(a_i)_{i=1,\dots,m}\subseteq\mathbb{R}_{\ge
0}$ and $m\in \mathbb{N}$,  which just applies for
$p\in (\frac{11}{5},3]$ since $2p'\ge p$, $\|\bfX_{h_n}\|_{2p',\gamma}
\leq c\,h_n^{\frac{1}{p'}-\frac{2}{p}}\|\bfX_{h_n}\|_{p,\gamma}$ for every
$\bfX_{h_n}\in X_h^\ell$ and $\gamma\in \Gamma_{h_n}$,
where $c>0$ is independent~of ${n\in\mathbb{N}}$, (cf.~\cite[Lemma 1.50]{EP12}), 
and $3-\frac{3}{p}+\frac{2}{p'}-\frac{4}{p}>0$ for $p>\frac{9}{5}$,  for~every~$\bfz\in \Vo(0)$, we obtain
\begin{align}
\begin{aligned}
\vert\langle\jump{ \bfv_{h_n}\otimes \bfn},
\jump{\bfv_{h_n}}\otimes\jump{
	\PiDGn\!\bfz}\rangle_{\smash{\Gamma_{h_n}^{i}}}\vert
&\leq h_n^{3-\frac{3}{p}}
\|h_n^{\frac{1}{p}-1}\jump{ \bfv_{h_n}\otimes \bfn}\|_{2p',\Gamma_{h_n}^{i}}^2
\|\PiDGn\bfz\|_{\nabla,p,h_n}\\&
\leq c\,h_n^{3-\frac{3}{p}}h_n^{(\frac{1}{p'}-\frac{2}{p})2}
\|h_n^{\frac{1}{p}-1}\jump{ \bfv_{h_n}\otimes \bfn}
\|_{p,\Gamma_{h_n}^{i}}^{2}\|\nabla \bfz\|_p
\to 0\quad(n\to\infty)\,.
\end{aligned}\hspace*{-5mm}\label{eq:3.4.1.7}
\end{align}

\textit{Case \hspace*{-0.1mm}$p\!\in\! (3,\infty)$.}
\hspace*{-0.1mm}If \hspace*{-0.1mm}$p\!\in\! (3,\infty)$, \hspace*{-0.1mm}then, \hspace*{-0.1mm}using \hspace*{-0.1mm}$h_n\|\bfv_{h_n}\|_{2p',\gamma}^{2p'}
\!\leq\! c\,\|\bfv_{h_n}\|_{2p',K}^{2p'}
\!\leq\!
c\,h^{3\frac{p-3}{p-1}}_n\|\bfv_{h_n}\|_{p,K}^{2p'}$~\hspace*{-0.1mm}for~\hspace*{-0.1mm}${\bfv_{h_n}\!\in\! V_{h_n}^\ell(0)}$, for every ${\bfz\in \Vo(0)}$,~we obtain 
\begin{align}
\begin{aligned}
\vert \big\langle\jump{ \bfv_{h_n}\otimes \bfn},
\jump{\bfv_{h_n}}\otimes\jump{\PiDGn\bfz
}\big\rangle_{\smash{\Gamma_{h_n}^{i}}}\vert&
\leq h_n^{\frac{1}{p'}}\|\jump{ \bfv_{h_n}\otimes \bfn}\|_{2p',\smash{\Gamma_{h_n}^{i}}}^2
\|\PiDGn\bfz\|_{\nabla,p,h_n}
\\
&\leq c\,h_n^{\frac{1}{p'}} (h_n^3\,\# \{\gamma \in \Gamma_{h_n}\})^{\smash{^{\frac{p-3}{p-1}}}}\|\bfv_{h_n}\|_{p}^2\|\nabla\bfz\|_p
\\&\leq c\,h_n^{\frac{1}{p'}}\vert \Omega\vert^{^{\frac{p-3}{p-1}}}\|\bfv_{h_n}\|_{\nabla,p,h_n}^{2}
\|\nabla\bfz\|_p\to 0\quad(n\to\infty)\,,
\end{aligned}\label{eq:3.4.1.8}
\end{align}
Eventually, since also ${\langle B_{n}^{\textup{I}} \bfv_{h_n},\bfv_{h_n}\rangle_{\smash{\WDGn}}
=0=\langle B\bfv,\bfv\rangle_{\Vo(0)}}$ for all $n\in \mathbb{N}$ due to \textit{(ii)}, 
we conclude from \eqref{eq:3.4.1.4}, \eqref{eq:3.4.1.3}, 
\eqref{eq:3.4.1.7} and \eqref{eq:3.4.1.8} for every $\bfz\in \Vo(0)$ that
\begin{align*}
\lim_{n\to\infty}{\langle B_{n}^{\textup{I}}\bfv_{h_n},\bfv_{h_n}-\bfz\rangle_{\smash{\WDGn}}}
=\langle B\bfv,\bfv-\bfz\rangle_{\Vo(0)}\,.
\end{align*}
\end{proof}

\begin{proposition}[Convective term II, (cf.~\cite{kr-pnse-ldg-1})]\label{3.4.2}
Let  $d\in \{2,3\}$, $\ell \in \setN$,  and $p>\frac{3d+2}{d+2}$.
Moreover, for every $n\in \mathbb{N}$, let $B_{n}^{\textup{II}} \colon\WDGn\to \WDGn^*$, for every $\bfw_{h_n},\bfz_{h_n}\in \WDGn$, be defined via
\begin{align*}
\big\langle B_{n}^{\textup{II}} \bfw_{h_n},\bfz_{h_n}\big\rangle_{\smash{\WDGn}}
\coloneqq\tfrac{1}{2}\big( \PiDGn\bfz_{h_n}\otimes \bfw_{h_n},\Ghnk\bfw_{h_n}\big)-\tfrac{1}{2}\big( \bfw_{h_n}\otimes \bfw_{h_n},\Ghnk\bfz_{h_n}\big)\,.
\end{align*}
Then, it holds:
\begin{itemize}
\item[(i)] For every $n\in \mathbb{N}$, the operator $B_{n}^{\textup{II}} \colon\WDGn\to \WDGn^*$ is well-defined, 
bounded~and~continuous. In addition, for every
$p$ there exist $q\in \left(1,\infty\right)$
such that for every $\varepsilon>0$, there
exists $c_\vep>0$, depending only on
$\vep^{-1}$, $\ell$, $p$
and $\omega_0$,  such that for every $n\in
\mathbb{N}$ and $\bfw_{h_n},\bfz_{h_n}\in \WDGn$, it holds
\begin{align*}
\vert \langle B_{n}^{\textup{II}}\bfw_{h_n},\bfz_{h_n}\rangle_{\smash{\WDGn}}\vert
\leq \varepsilon\,\|\bfw_{h_n}\|_{\nabla,p,h_n}^p
+c_{\varepsilon}\big(1+\|\bfw_{h_n}\|_2^q+\|\bfz_{h_n}\|_2^q+\|\bfz_{h_n}\|_{\nabla,p,h_n}^p\big).
\end{align*}
\item[(ii)] For every $n\in \mathbb{N}$ and $\bfv_{h_n}\in \Vhnk$, it
holds $\langle B_{n}^{\textup{II}} \bfv_{h_n},\bfv_{h_n}\rangle_{\smash{\WDGn}}=0$.
\item[(iii)] The operators $B_{n}^{\textup{II}} \colon\WDGn\to \WDGn^*$, $n\in\mathbb{N}$, are 
non-conforming pseudo-monotone with respect to $(\Vhnk(0))_{n\in \mathbb{N}}$ 
and $B\colon\Vo(0)\to \Vo(0)^*$ from Proposition \ref{3.4.1}.
\end{itemize}
\end{proposition}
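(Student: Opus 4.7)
The approach parallels the analysis of Proposition~\ref{3.4.1}: I will verify (i) by direct H\"older-type estimates, (ii) by observing an immediate cancellation, and invest the main effort in (iii), which requires a limit passage using compactness of the DG sequence together with an integration by parts to identify the limit.

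For part (i), each of the two summands defining $B_n^{\textup{II}}$ is bounded via H\"older's inequality with exponents $(2p',2p',p)$ by a product of two $L^{2p'}$-norms and one $\|\cdot\|_{\nabla,p,h_n}$-norm of $\bfw_{h_n},\bfz_{h_n}$, upon using the $L^{2p'}$-stability of $\PiDGn$ (cf.~\cite[(A.11)]{dkrt-ldg}) and the equivalence $\|\Ghnk\bfw_{h_n}\|_p\le c\,\|\bfw_{h_n}\|_{\nabla,p,h_n}$ (cf.~\eqref{eq:eqiv0}). Combined with the Sobolev embedding $\WDGn\hookrightarrow L^{2p'}(\Omega)$, valid under $p>\frac{3d+2}{d+2}$ (cf.~\cite[Proposition 2.6]{kr-pnse-ldg-3}), the desired growth estimate follows by the same case distinction $p\ge 3$ versus $p\in(\frac{3d+2}{d+2},3)$ and the analogous $\varepsilon$-Young manipulations as in \eqref{eq:3.4.1.d}--\eqref{eq:3.4.1.j}. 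Well-posedness, boundedness, and continuity of $B_n^{\textup{II}}$ are standard consequences of the polynomial structure. Part (ii) is immediate: for $\bfv_{h_n}\in \Vhnk$ we have $\PiDGn\bfv_{h_n}=\bfv_{h_n}$, so the two summands defining $\langle B_n^{\textup{II}}\bfv_{h_n},\bfv_{h_n}\rangle_{\smash{\WDGn}}$ coincide and cancel.

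For part (iii), assume without loss of generality that $m_n=n$ and let $\bfv_{h_n}\in \Vhnk(0)$, $n\in \mathbb{N}$, satisfy the hypotheses of Definition \ref{3.4.0}. Proposition \ref{3.2} yields $\bfv\in \Vo(0)$, the discrete Rellich--Kondrachov theorem (cf.~\cite[Theorem 5.6]{EP12}) gives strong convergence $\bfv_{h_n}\to \bfv$ in $L^q(\Omega)$ for every $q<p^*$, and the reasoning used in Proposition \ref{3.3} yields $\Ghnk\bfv_{h_n}\rightharpoonup \nabla \bfv$ in $L^p(\Omega)$. Thanks to part (ii), the non-conforming pseudo-monotonicity reduces to showing that $\langle B_n^{\textup{II}}\bfv_{h_n},\bfz\rangle_{\smash{\WDGn}}\to \langle B\bfv,\bfz\rangle_{\Vo(0)}$ for every $\bfz\in \Vo(0)$. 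For such $\bfz$, we have $\Ghnk\bfz=\nabla \bfz$, while the $L^q$-approximation of $\PiDGn$ (cf.~\cite[Corollary A.8]{kr-phi-ldg}) gives $\PiDGn\bfz\to \bfz$ in $L^q(\Omega)$ for every $q<p^*$. Since $p>\frac{3d+2}{d+2}$ guarantees $2p'<p^*$, the products $\PiDGn\bfz\otimes\bfv_{h_n}$ and $\bfv_{h_n}\otimes\bfv_{h_n}$ converge strongly in $L^{p'}(\Omega)$ to $\bfz\otimes\bfv$ and $\bfv\otimes\bfv$, respectively, and pairing these with the weakly convergent $\Ghnk\bfv_{h_n}$ and the fixed $\nabla \bfz\in L^p(\Omega)$ yields
\[
\langle B_n^{\textup{II}}\bfv_{h_n},\bfz\rangle_{\smash{\WDGn}}\;\to\;\tfrac{1}{2}(\bfz\otimes\bfv,\nabla\bfv)-\tfrac{1}{2}(\bfv\otimes\bfv,\nabla\bfz)\,.
\]
Integration by parts, exploiting $\divo\bfv=0$ and $\bfv|_{\partial\Omega}=\mathbf{0}$, identifies each of the two limit terms with $\tfrac{1}{2}([\nabla \bfv]\bfv,\bfz)$, so the right-hand side equals $([\nabla \bfv]\bfv,\bfz)=\langle B\bfv,\bfz\rangle_{\Vo(0)}$, as required.

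The main obstacle is securing the strong convergence $\bfv_{h_n}\otimes \bfv_{h_n}\to \bfv\otimes \bfv$ in $L^{p'}(\Omega)$, which hinges on the sharp Sobolev integrability provided by the assumption $p>\frac{3d+2}{d+2}$; for $p$ close to this critical threshold the available embedding is tight, and it is precisely this hypothesis that cannot be relaxed. All remaining convergences are either linear in $\bfv_{h_n}$ or pair the weak limit with a fixed smooth test function and are, therefore, considerably more robust.
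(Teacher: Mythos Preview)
Your proof is correct and, for parts (i) and (ii), follows the paper's approach essentially verbatim: the same H\"older splitting into two $L^{2p'}$-factors times one DG-norm factor, the same case distinction $p\ge 3$ versus $p\in(\tfrac{3d+2}{d+2},3)$ with the interpolation $\rho=\tfrac{5p}{3}$, and the same cancellation argument for (ii). For part (iii) the paper simply cites \cite[Lemma 5.3]{kr-pnse-ldg-1}, whereas you supply a self-contained proof via discrete Rellich--Kondrachov compactness, strong convergence of the tensor products in $L^{p'}(\Omega)$ (relying on $2p'<p^*$), weak convergence of $\Ghnk\bfv_{h_n}$, and a final integration by parts exploiting $\divo\bfv=0$; this argument is correct and is in fact the natural route behind the cited lemma.
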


\begin{proof}
Again, we will only prove the case $d=3$.

\textit{ad (i).}
For every $n\in \mathbb{N}$ and $\bfw_{h_n},\bfz_{h_n}\in \WDGn$, it holds
\begin{align}
\begin{aligned}
&\vert( \PiDG\bfz_{h_n}\otimes \bfw_{h_n},\Ghnk\bfw_{h_n})\vert+\vert( \bfw_{h_n}\otimes \bfw_{h_n},\Ghnk\bfz_{h_n})\vert
\\
&\leq c\,\|\bfz_{h_n}\|_{2p'}\|\bfw_{h_n}\|_{2p'}
\|\bfw_{h_n}\|_{\nabla,p,h_n}+c\,\|\bfw_{h_n}\|_{2p'}^2
\|\bfz_{h_n}\|_{\nabla,p,h_n}\,.
\end{aligned}\label{eq:3.4.2.c}
\end{align}
Next, we distinguish between the cases $p\ge 3$ and $p<3$:

\textit{Case $p\in [3,\infty)$.}
For $p\ge 3$ we have $2p'\ge p$  and $p-1\ge 2$, which implies
$\|\bfw_{h_n}\|_{2p'}\leq c\,\|\bfw_{h_n}\|_{\nabla,p,h_n}$ for all $\bfw_{h_n}\in \WDGn$ and $ n\in \mathbb{N}$, where $c>0$ does not depend on $n\in \mathbb{N}$, 
due to \cite[Proposition 2.6]{kr-pnse-ldg-3}, and  
$a^2\leq c\,(1+a^{p-1})$ for all $a\ge 0$. Using this and the 
$\varepsilon$-Young inequality \eqref{ineq:young}~with~$\psi=\vert \cdot\vert^p$, we obtain  from \eqref{eq:3.4.2.c} for every $\varepsilon>0$, 
$n\in \mathbb{N}$ and $\bfv_{h_n},\bfz_{h_n}\in \WDGn$ that
\begin{align}
\begin{aligned}
\vert( \PiDG\bfz_{h_n}\otimes \bfw_{h_n},\Ghnk\bfw_{h_n})\vert+	\vert( \bfw_{h_n}\otimes\bfw_{h_n},\Ghnk\bfz_{h_n})\vert
&\leq c\,\|\bfw_{h_n}\|_{\nabla,p,h_n}^2\|\bfz_{h_n}\|_{\nabla,p,h_n}
\\&\leq c\,\big(1+\|\bfw_{h_n}\|_{\nabla,p,h_n}^{p-1}\big)
\|\bfz_{h_n}\|_{\nabla,p,h_n}\\&\leq \varepsilon\,c\,\big(1+\|\bfw_{h_n}\|_{\nabla,p,h_n}^{p}\big)+
c_\varepsilon\,\|\bfz_{h_n}\|_{\nabla,p,h_n}^{p}\,.
\end{aligned}\label{eq:3.4.2.d}
\end{align}

\textit{Case $p\!\in\! (\frac{11}{5},3)$.}
If $p\!\in\! (\frac{11}{5},3)$, then, by interpolation with 
$\frac{1}{\rho}\!=\!\frac{1-\theta}{p^*}+\frac{\theta}{2}$, 
where $\rho\!=\!p\frac{5}{3}$,~$\theta\!=\!\frac{2}{5}$~and~${p^*\!=\!\frac{3p}{3-p}}$
and \cite[Propositon 2.6]{kr-pnse-ldg-3}, 
for every $n\in \mathbb{N}$ and $\bfw_{h_n}\in \WDGn$, we obtain 
\begin{align}
\|\bfw_{h_n}\|_{\rho}\leq \|\bfw_{h_n}\|_2^\frac{2}{5}\|\bfw_{h_n}\|_{p^*}^{\frac{3}{5}}
\leq  c\,\|\bfw_{h_n}\|_2^\frac{2}{5}\|\bfw_{h_n}\|_{\nabla,p,h_n}^{\frac{3}{5}}\,.\label{eq:3.4.2.e}
\end{align}
Since $\rho\ge 2p'$, from  \eqref{eq:3.4.2.e} 
in \eqref{eq:3.4.2.c},  for every $n\in \mathbb{N}$ and $\bfw_{h_n},\bfz_{h_n}\in \WDGn$, we further conclude that
\begin{align}
\begin{aligned}
&\vert( \PiDG\bfz_{h_n}\otimes
\bfw_{h_n},\Ghnk\bfw_{h_n})\vert+ \vert(\bfw_{h_n}\otimes
\bfw_{h_n},\Ghnk\bfz_{h_n})\vert \\&\leq
c\,\|\bfw_{h_n}\|_2^\frac{2}{5}\|\bfw_{h_n}\|_{\nabla,p,h_n}^{\frac{8}{5}}
\|\bfz_{h_n}\|_2^\frac{2}{5}\|\bfz_{h_n}\|_{\nabla,p,h_n}^{\frac{3}{5}}
\\&\quad +c\,\|\bfw_{h_n}\|_2^\frac{4}{5}\|\bfw_{h_n}\|_{\nabla,p,h_n}^{\frac{6}{5}}
\|\bfz_{h_n}\|_{\nabla,p,h_n} \\&\leq
c\,\|\bfw_{h_n}\|_2^\frac{2p}{5p-3}\|\bfw_{h_n}\|_{\nabla,p,h_n}^{\frac{8p}{5p-3}}
\|\bfz_{h_n}\|_2^{\frac{2p}{5p-3}}\\&\quad+
c\,\|\bfw_{h_n}\|_2^\frac{4p'}{5}\|\bfw_{h_n}\|_{\nabla,p,h_n}^{\frac{6p'}{5}}+c\,\|\bfz_{h_n}\|_{\nabla,p,h_n}^{p}\,,\label{eq:3.4.2.f}
\end{aligned}
\end{align}
where we used the $\varepsilon$-Young inequality \eqref{ineq:young} with $\psi=\vert \cdot\vert^{\smash{\frac{5p}{3}}}$  for the first term and with $\psi=\vert \cdot\vert^{p}$~for~the second term. 
Since $\smash{s\coloneqq \frac{8p}{5p-3}<p}$  and $\smash{\tilde{s}\coloneqq \frac{6p'}{5}<p}$ for $p>\frac{11}{5}$, 
the $\varepsilon$-Young inequality \eqref{ineq:young} with $\psi=\vert \cdot\vert^{r}$, where $\smash{{r}=\frac{p}{s}}$, and with $\psi=\vert \cdot\vert^{\tilde{r}}$, where $\smash{\tilde{r}=\frac{p}{\tilde{s}}}$ yields  for every $\varepsilon>0$, 
$n\in \mathbb{N}$ and $\bfw_{h_n},\bfz_{h_n}\in \WDGn$ 
\begin{align}
\begin{aligned}
&\vert( \PiDG\bfz_{h_n}\otimes \bfw_{h_n},\Ghnk\bfw_{h_n})\vert+	\vert(\bfw_{h_n}\otimes \bfw_{h_n},\Ghnk\bfz_{h_n})\vert
\\
& \leq \varepsilon\,c\,\|\bfw_{h_n}\|_{\nabla,p,h_n}^{p}
+c_{\varepsilon}\big(1+\|\bfw_{h_n}\|_2^q+\|\bfz_{h_n}\|_2^q
+\|\bfz_{h_n}\|_{\nabla ,p,h_n}^{p}\big)\,,
\end{aligned}\label{eq:3.4.2.g}
\end{align}
where
$\smash{q\coloneqq \max\{2r'\frac{2p}{5p-3},\tilde{r}'\frac{4p'}{5}\}}>0$.

Thus, we proved that for every
$\varepsilon>0$, there exists $c_\vep>0$ such that for every $n\in \mathbb{N}$ 
and $\bfw_{h_n},\bfz_{h_n}\in \WDGn$, it holds 
\begin{align}
\vert\langle B_{n}^{\textup{II}} \bfw_{h_n},\bfz_{h_n}\rangle_{\WDGn}\vert
& \leq \varepsilon\,c\,\|\bfw_{h_n}\|_{\nabla,p,h_n}^{p}
+c_{\varepsilon}\big(1+\|\bfw_{h_n}\|_2^q+\|\bfz_{h_n}\|_2^q
+\|\bfz_{h_n}\|_{\nabla ,p,h_n}^{p}\big)\,.\label{eq:3.4.2.j}
\end{align}

\textit{ad (ii).} Follows right from the definition.

\textit{ad (iii).} See \cite[Lemma 5.3]{kr-pnse-ldg-1}.
\end{proof}

Let us summarize  our setup for the treatment of problem
\eqref{eq:p-NS}.

\begin{asum}\label{asumex-p-NS}
Let $\Omega\subseteq \mathbb{R}^d$, $d\in \{2,3\}$, be  a bounded polygonal
Lipschitz domain, $I\coloneqq \left(0,T\right)$, $T<\infty$,
and $\ell \in \setN$ be given. In addition, we make the following assumptions:
\begin{itemize}
\item[(i)] $\SSS$ satisfies Assumption \ref{assum:extra_stress} for $\delta\ge 0$ and $p>\frac{3d+2}{d+2}$.
\item[(ii)] For $\bfv_0\hspace*{-0.1em}\in\hspace*{-0.1em} \Ho(0)$ let $\bfv_n^0\hspace*{-0.1em}\in\hspace*{-0.1em} V_{h_n}^\ell(0)$, $n\in
\mathbb{N}$ be such that $\bfv_n^0\!\to\! \bfv_0$ in $L^2(\Omega)$
$(n\!\to\! \infty)$, ${\sup_{n\in 
	\mathbb{N}}{\|\bfv_n^0\|_2}\hspace*{-0.1em}\leq\hspace*{-0.1em} \|\bfv_0\|_2}$. 
\item[(iii)] For $\mathbf{g}\in L^{p'}(I, L^{p'}(\Omega))$ and $\mathbf{G}\in
L^{p'}(I,L^{p'}(\Omega))$, we define  $f\in L^{p'}(I,\Vo(0)^*)$,~for~every~$\bfz\in L^p(I,\Vo(0))$  via
\begin{align*}
\int_I{\langle f(t),\bfz(t)\rangle_{\Vo(0)}\,\mathrm{d}t}\coloneqq \int_{I} \int_\Omega{\mathbf{g}\cdot \bfz+\mathbf{G}:\nabla \bfz\,\mathrm{d}t\,\mathrm{d}x}
\end{align*}
and  $f_n\in L^{p'}(I,\WDGn^*)$, $n\in \mathbb{N}$, for every  $\bfz_n\in L^p(I,\WDGn)$ via
\begin{align*}
\int_I{\langle f_n(t),\bfz_n(t)\rangle_{\WDGn}\,\mathrm{d}t}\coloneqq\int_{I}\int_\Omega{\mathbf{g}\cdot \bfz_n+\mathbf{G}:\Ghnk \bfz_n\,\mathrm{d}t\,\mathrm{d}x}\,.
\end{align*}
\item[(iv)] We set $A_n\coloneqq S_n+B_n\colon \WDGn\to \WDGn^*$,
$n\in \mathbb{N}$, where ${S_n\colon \WDGn\to \WDGn^*}$ is either
$S_n^{\textup{\textsf{\textbf{\tiny LDG}}}}\colon \WDGn\to
\WDGn^*$ or $S_n^{\textup{\textsf{\textbf{\tiny SIP}}}}\colon
\WDGn\to \WDGn^*$  and $B_n\colon \WDGn\to \WDGn^*$ is either $B_n^{\textup{I}}\colon \WDGn\to \WDGn^*$ or $B_n^{\textup{II}}\colon \WDGn\to \WDGn^*$.
\end{itemize}
\end{asum}
Moreover, we denote by $e\coloneqq (\textup{id}_{\Vo(0)})^*R_{\Ho(0)}\colon \Vo(0)\to \Vo(0)^*$ the
canonical embedding with respect to the evolution triple\enlargethispage{8mm}
$(\Vo(0),\Ho(0),\textup{id})$.  The following lemma shows that the assumptions on the right-hand side in Assumption \ref{asum} are fulfilled in our setting.

\begin{lemma}\label{lem:rhs}
	Let Assumption \ref{asumex-p-NS} be satisfied. Then, the sequence $f_n\in 
	L^{p'}(I,\WDGn^*)$ satisfies (\hyperlink{BN.1}{BN.1}) and  (\hyperlink{BN.2}{BN.2}) with respect to $f\in L^{p'}(I,\Vo(0)^*)$, i.e., $\sup_{n\in\mathbb{N}}{\| f_n\|_{L^p(I,\WDGn^*)}}<\infty$ and
	for every sequence $\bfv_{h_{m_n}}\in L^p(I,V_{h_{m_n}}^\ell\!(0))\cap L^\infty(I,L^2(\Omega))$, where $(m_n)_{n\in\setN}\subseteq\setN$ with $ m_n\to \infty $ $(n\to\infty)$,  from 
	\begin{align}\label{lem:rhs.1}
		\begin{aligned}
		\sup_{n\in\mathbb{N}}{\| \bfv_{h_{m_n}}\|_{L^p(I,\WDGn)\cap L^\infty(I,L^2(\Omega))}}<\infty\,,\\\qquad\bfv_{h_{m_n}}\overset{\ast}{\rightharpoondown }\bfv\quad\text{ in }L^\infty(I,L^2(\Omega))\quad (n\to\infty)\,,
		\end{aligned}
	\end{align}
	it follows that 
	\begin{align}\label{lem:rhs.2}
			\int_I{\langle f_{m_n}(t),\bfv_{h_{m_n}}(t)\rangle_{\WDGn}\,\mathrm{d}t}\to \int_I{\langle f(t),\bfv(t)\rangle_{\Vo(0)}\,\mathrm{d}t}\quad (n\to \infty)\,.
	\end{align}
\end{lemma}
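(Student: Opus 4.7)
The plan is to verify (\hyperlink{BN.1}{BN.1}) and (\hyperlink{BN.2}{BN.2}) by exploiting the broken Sobolev embedding together with the gradient equivalence in \eqref{eq:eqiv0}, and, for the convergence assertion, by identifying weak limits through the very same technique used in the proof of Proposition \ref{3.3}.

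\textbf{Step 1: Verification of (\hyperlink{BN.1}{BN.1}).} For arbitrary $n\in\mathbb{N}$ and $\bfz_n\in\WDGn$, H\"older's inequality applied to the definition of $f_n$ yields
\begin{align*}
\vert\langle f_n(t),\bfz_n\rangle_{\WDGn}\vert\leq \|\bfg(t)\|_{p'}\|\bfz_n\|_p+\|\bfG(t)\|_{p'}\big\|\Ghnk\bfz_n\big\|_p\,.
\end{align*}
By the broken Sobolev embedding $\WDGn\hookrightarrow L^{p}(\Omega)$ from \cite[Proposition 2.6]{kr-pnse-ldg-3} (with an embedding constant independent of $n$) and by \eqref{eq:eqiv0}, both right-hand terms are controlled by a multiple of $\|\bfz_n\|_{\nabla,p,h_n}$ with a constant independent of $n$. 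Dualizing and raising to the $p'$-th power, an integration in time gives
\begin{align*}
\sup_{n\in\mathbb{N}}\|f_n\|_{L^{p'}(I,\WDGn^*)}\leq c\,\bigl(\|\bfg\|_{L^{p'}(I,L^{p'}(\Omega))}+\|\bfG\|_{L^{p'}(I,L^{p'}(\Omega))}\bigr)<\infty\,.
\end{align*}

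\textbf{Step 2: Extraction of weak limits for (\hyperlink{BN.2}{BN.2}).} Assume \eqref{lem:rhs.1}. Thanks to the broken Sobolev embedding (uniform in $n$) and \eqref{eq:eqiv0}, the sequences $(\bfv_{h_{m_n}})_{n\in\mathbb{N}}$ and $(\Ghnk[m_n] \bfv_{h_{m_n}})_{n\in\mathbb{N}}$ are bounded in $L^p(I,L^p(\Omega))$. Hence, along a not relabeled subsequence,
\begin{align*}
\bfv_{h_{m_n}}\;\rightharpoonup\;\tilde\bfv\,,\qquad \Ghnk[m_n]\bfv_{h_{m_n}}\;\rightharpoonup\;\bfG'\quad\text{ in }L^p(I,L^p(\Omega))\quad(n\to\infty)\,,
\end{align*}
for some $\tilde\bfv\in L^p(I,L^p(\Omega))$ and $\bfG'\in L^p(I,L^p(\Omega))$. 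The weak-$*$ convergence $\bfv_{h_{m_n}}\overset{\ast}{\rightharpoondown}\bfv$ in $L^\infty(I,L^2(\Omega))$ forces $\tilde\bfv=\bfv$ almost everywhere, since both limits must coincide when tested against functions in $C_0^\infty(I\times\Omega)$.

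\textbf{Step 3: Identification $\bfG'=\nabla \bfv$.} This is the main obstacle and proceeds exactly as the argument following \eqref{eq:3.3.4} in the proof of Proposition \ref{3.3}: extending $\bfv_{h_{m_n}}$ by zero to $\widehat\Omega$, using the formula \eqref{eq:DGnablaR1} on the extended triangulation, element-wise integration-by-parts, and the face-wise projection estimate $\|\bfX-\Pi_{h_n}^\ell \bfX\|_{p',\widehat\Gamma_{h_n}}\leq c\, h_n^{1-1/p'}\|\bfX\|_{1,p',\widehat\Omega}$ to pass to the limit in the distributional identity. The outcome is $\bfG'=\nabla\bfv$ on $\Omega$, consistent with $\bfv\in L^p(I,\Vo(0))$ which is already guaranteed by Proposition \ref{3.3}.

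\textbf{Step 4: Passage to the limit and whole-sequence convergence.} From $\bfg,\bfG\in L^{p'}(I,L^{p'}(\Omega))$ together with the weak convergences established in Step 2 and Step 3, we infer
\begin{align*}
\int_I\!\int_\Omega\bfg\cdot\bfv_{h_{m_n}}\,\mathrm{d}x\,\mathrm{d}t &\to \int_I\!\int_\Omega\bfg\cdot\bfv\,\mathrm{d}x\,\mathrm{d}t\,,\\
\int_I\!\int_\Omega\bfG:\Ghnk[m_n]\bfv_{h_{m_n}}\,\mathrm{d}x\,\mathrm{d}t &\to \int_I\!\int_\Omega\bfG:\nabla\bfv\,\mathrm{d}x\,\mathrm{d}t\,,
\end{align*}
which together yield \eqref{lem:rhs.2} along the extracted subsequence. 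Since the limits identified in Step 2 and Step 3 are unique ($\bfv$ and $\nabla\bfv$, respectively), the standard convergence principle (cf.~\cite[Kap.~I, Lemma 5.4]{GGZ}) applied to both weakly convergent sequences upgrades subsequential weak convergence to weak convergence of the entire sequence. Therefore \eqref{lem:rhs.2} holds for the whole sequence, completing the proof.
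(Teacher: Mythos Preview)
Your proof is correct and follows essentially the same approach as the paper: Step~1 matches the paper's verification of (\hyperlink{BN.1}{BN.1}) (the paper cites \cite[Lemma~A.9]{dkrt-ldg} for the discrete Poincar\'e inequality rather than \cite[Proposition~2.6]{kr-pnse-ldg-3}, but the content is the same), and Steps~2--4 reproduce precisely the argument that the paper compresses into a direct citation of Proposition~\ref{3.3} (in particular \eqref{eq:3.3.3}) together with the discrete Poincar\'e inequality. Your version is slightly more explicit about the subsequence extraction and the upgrade to whole-sequence convergence via the standard convergence principle, which the paper leaves implicit.
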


\begin{proof}
		\textit{ad (\hyperlink{BN.1}{BN.1}).} Let $\bfw_{h_n}\in \WDGn$ be arbitrary. Then, for almost every $t\in I$, we have that 
		\begin{align}
			\big\langle f_n(t),\bfw_{h_n}\big\rangle_{\smash{W^{1,p}(\mathcal{T}_{h_n})}}\leq \|\bfg(t)\|_{p'}\|\bfw_{h_n}\|_p+\|\bfG(t)\|_{p'}\big\|\Ghnk\bfw_{h_n}\big\|_p\,.\label{lem:rhs.3}
		\end{align}
		Appealing to \cite[Lemma A.9]{dkrt-ldg}, it holds $\|\bfw_{h_n}\|_p\leq c\,\|\bfw_{h_n}\|_{\nabla,p,h_n}$, so that~from~\eqref{lem:rhs.3}, using the norm equivalence \eqref{eq:eqiv0}, it follows that ${\|f_n(t)\|_{\smash{\WDGn^*}}\leq c\,\|\bfg(t)\|_{p'}+c\,\|\bfG(t)\|_{p'}}$ for almost every $t\in I$. Therefore, we have that $\sup_{n\in\mathbb{N}}{\|f_n\|_{L^{p'}(I,\smash{\WDGn^*})}}\leq c\,\|\bfg\|_{L^{p'}(I,L^{p'}(\Omega))}+c\,\|\bfG\|_{L^{p'}(I,L^{p'}(\Omega))}$.

		\textit{ad (\hyperlink{BN.2}{BN.2}).}	Let $\bfv_{h_{m_n}}\in L^p(I,V_{h_{m_n}}^\ell\!(0))\cap L^\infty(I,L^2(\Omega))$, $n\in \setN$, where $(m_n)_{n\in\setN}\subseteq\setN$ with $ m_n\to \infty $ $(n\to\infty)$, be a sequence satisfying
		\eqref{lem:rhs.1}. Then, using Proposition \ref{3.3}, in particular, \eqref{eq:3.3.3}, we find that $\bfv\in L^p(I,\Vo(0))\cap L^\infty(I,\Ho(0))$ and 
		\begin{align}
				\boldsymbol{\mathcal{G}}_{h_{m_n}}^\ell\!\!\bfv_{h_{m_n}}\weakto\nabla\bfv\quad\text{ in }L^p(I,L^p(\Omega))\quad(n\to \infty)\,.\label{lem:rhs.4}
		\end{align}
		In addition, resorting to the discrete Poincar\'e inequality \cite[Lemma A.9]{dkrt-ldg}, we obtain
		\begin{align}
				\bfv_{h_{m_n}}\weakto\bfv\quad\text{ in }L^p(I,L^p(\Omega))\quad(n\to \infty)\,.\label{lem:rhs.5}
		\end{align}
		Eventually, from \eqref{lem:rhs.4} and \eqref{lem:rhs.5}, we conclude that \eqref{lem:rhs.2} applies.
\end{proof}

Thus, the quasi non-conforming
Rothe--Galerkin~scheme~in~this~setup~reads:

\begin{alg}\label{eq:p-NSnon}
Let Assumption \ref{asumex-p-NS} be satisfied. For given
$K,n\in \mathbb{N}$, the sequence of iterates
${\bfv_n^k\hspace*{-0.1em}=\hspace*{-0.1em}\bfv_{h_n}^k\hspace*{-0.1em}\in\hspace*{-0.1em} \Vhnk(0)}$,
$k\hspace*{-0.1em}=\hspace*{-0.1em}0,\dots,K$ is given via the implicit Rothe--Galerkin~scheme~for~${\tau\hspace*{-0.1em}=\hspace*{-0.1em}\frac{T}{K}}$~and~${k\hspace*{-0.1em}=\hspace*{-0.1em}1,\dots,K}$
\begin{align}
(d_\tau \bfv_n^k,\bfz_{h_n})_{\Ho(0)}+\langle A_n\bfv_n^k, \bfz_{h_n}\rangle_{\WDGn}=\langle
[f_n]^\tau_k, \bfz_{h_n}\rangle_{\WDGn}\quad\text{ for all
}\bfz_{h_n}\in \Vhnk(0)\,.\label{eq:p-NSnon.1}
\end{align}
\end{alg}

By means of Proposition~\ref{5.1}, Proposition~\ref{apriori}, Theorem~\ref{5.17} and the observations already made in Proposition~\ref{3.3}, Proposition~\ref{ldg}, Proposition \ref{SIP}, Proposition \ref{3.4.1} and Proposition \ref{3.4.2}, we can immediately conclude the following results: 

\begin{theorem}[Well-posedness, stability, and weak convergence of 
\eqref{eq:p-NSnon.1}]\label{thm:appl-p-NS}
Let Assumption \ref{asumex-p-NS} be satisfied. Then, it holds:
\begin{itemize}
\item[(i)] (Well-posedness). For every $K,n\in \mathbb{N}$
there exist iterates $(\bfv_n^k)_{k=0,\dots,K}\subseteq \Vhnk(0)$,
solving \eqref{eq:p-NSnon.1}, without any restrictions on the
step-size. 
\item[(ii)] (Stability). The
corresponding piece-wise constant
interpolants   $\overline{\bfv}_n^\tau\in
\mathcal{P}_0(\mathcal{I}_\tau,\Vhnk(0))$, $K,n\in\mathbb{N}$ with $\tau=\frac{T}{K}$, satisfy
\begin{align*}
\sup_{n\in \mathbb{N},\tau>0}{\|\overline{\bfv}_n^\tau\|_{L^p(I,\WDGn)\cap L^\infty(I,L^2(\Omega))} }<\infty\,.
\end{align*}
\item[(iii)] (Weak convergence). If
$(\overline{\bfv}_n)_{n\in\mathbb{N}}\coloneqq (\overline{\bfv}_{m_n}^{\tau_n})_{n\in\mathbb{N}}$,
where $\tau_n=\frac{T}{K_n}$ and
${K_n,m_n\to \infty }$~${(n\to\infty)}$, is an arbitrary diagonal
sequence of the piece-wise constant interpolants
${\overline{\bfv}_n^\tau\!\in\!
\mathcal{P}_0(\mathcal{I}_\tau,\Vhnk(0))}$,~${K,n\!\in\!\mathbb{N}}$~with
${\tau=\frac{T}{K}}$, then there exists a not relabelled
subsequence and 
${\overline{\bfv}\in L^p(I,\Vo(0))\cap L^\infty(I,\Ho(0))}$~such~that
\begin{align*}
\begin{aligned}
	\overline{\bfv}_n&\;\;\rightharpoonup\;\;\overline{\bfv}&&\quad\text{
		in }L^p(I,L^p(\Omega))
	&&\quad(n\to\infty)\,,\\
	\Dhnk\!\overline{\bfv}_n&\;\;\rightharpoonup\;\;\bfD\overline{\bfv}&&\quad\text{
		in }L^p(I,L^p(\Omega))
	&&\quad(n\to\infty)\,,\\
	\overline{\bfv}_n&\;\;\overset{\ast}{\rightharpoondown}\;\;\overline{\bfv}&&\quad\text{
		in }L^\infty(I,L^2(\Omega))
	&&\quad(n\to\infty)\,.
\end{aligned}
\end{align*}
Furthermore, it follows that $\overline{\bfv}\in
{W}_e^{1,p,p}(I,V,H)\cap L^\infty(I,H)$ satisfies
$\overline{\bfv}(0)=\mathbf{v}_0$ in $\Ho(0)$ and for every
$\bfz\in L^p(I,\Vo(0))$, it holds
\begin{align*}
\int_I{\bigg\langle\frac{d_e\overline{\bfv}}{dt}(t),\bfz(t)\bigg\rangle_{\Vo(0)}\,\mathrm{d}t}+\int_I{\langle
	A(t)(\overline{\bfv}(t)),\bfz(t)\rangle_{\Vo(0)}\,\mathrm{d}t}=\int_I{\langle f(t),\bfz(t)\rangle_{\Vo(0)}\,\mathrm{d}t}\,. 
\end{align*}
\end{itemize}
\end{theorem}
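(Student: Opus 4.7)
The strategy is to reduce Theorem \ref{thm:appl-p-NS} to the abstract results by verifying that Assumption \ref{asum} is satisfied with $V=\Vo(0)$, $H=\Ho(0)$, $Y=L^2(\Omega)$, $X_n=\WDGn$, $V_n=\Vhnk(0)$, and $j=\mathrm{id}$. Once this is done, (i) follows from Proposition \ref{5.1}, (ii) from Proposition \ref{apriori}, and the $L^\infty(I,L^2(\Omega))$ weak-$\ast$ convergence together with the identification of $\overline{\bfv}$ as a weak solution follow from Theorem \ref{5.17}.

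For Assumption \ref{asum}(i), Proposition \ref{3.3} already shows that $(\Vhnk(0))_{n\in\mathbb{N}}$ is a Bochner non-conforming approximation of $\Vo(0)$ with respect to $(\WDGn)_{n\in\mathbb{N}}$, once one observes that for $p>\frac{3d+2}{d+2}>\frac{2d}{d+2}$ the proposition applies. Assumption \ref{asum}(ii) on the initial data is part of the hypotheses in Assumption \ref{asumex-p-NS}(ii), and Assumption \ref{asum}(iii), i.e., (\hyperlink{BN.1}{BN.1}) and (\hyperlink{BN.2}{BN.2}), is exactly the statement of Lemma \ref{lem:rhs}. Since the operators are autonomous in $t$, Bochner measurability (A.2), (\hyperlink{AN.2}{AN.2}) is automatic.

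The main verification is Assumption \ref{asum}(iv) for $A_n=S_n+B_n$ with $A=S+B$. Propositions \ref{ldg} and \ref{SIP} provide (\hyperlink{AN.1}{AN.1})--(\hyperlink{AN.4}{AN.4}) for $S_n$ together with non-conforming pseudo-monotonicity with respect to $S$, where for SIP one requires the penalty parameter $\alpha>\alpha_0$. Propositions \ref{3.4.1} and \ref{3.4.2} provide the analogous properties for $B_n^{\textup{I}}$ and $B_n^{\textup{II}}$ with respect to $B$, and crucially yield $\langle B_n\bfv_{h_n},\bfv_{h_n}\rangle=0$ on $\Vhnk(0)$, so the $B_n$ contribute nothing to the coercivity constant $c_1$ (of the $\|jv_n\|_Y^2$-term). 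Hence one can take $c_1=0$ in (\hyperlink{AN.3}{AN.3}), so the threshold $\tau_0=\frac{1}{4c_1}$ in Proposition \ref{5.1} becomes vacuous and well-posedness holds for all step-sizes $\tau>0$. The growth constant $\lambda$ in (\hyperlink{AN.4}{AN.4}) can be made smaller than $c_0$ by choosing the $\varepsilon$-parameter in Propositions \ref{ldg}(i), \ref{SIP}(i), \ref{3.4.1}(i), \ref{3.4.2}(i) sufficiently small. Lemma \ref{3.4.a} then delivers non-conforming pseudo-monotonicity of the sum. Since $\dim(\Vhnk(0))<\infty$, (\hyperlink{AN.5}{AN.5}) follows from continuity of the restriction to $\Vhnk(0)$ via the remark after Assumption~\ref{asum}, and continuity is contained in Propositions \ref{ldg}(i), \ref{SIP}(i), \ref{3.4.1}(i), \ref{3.4.2}(i). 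The limit operator $A=S+B$ satisfies (A.1)--(A.3) by standard Nemytski\u{\i}-type considerations combined with $p>\frac{3d+2}{d+2}$. The most delicate point is balancing the growth constant $\lambda$ against the coercivity $c_0$ when assembling all four contributions, but the explicit $\varepsilon$-dependence in the quoted estimates makes this bookkeeping routine.

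To conclude (iii), Theorem \ref{5.17} provides a not relabelled diagonal subsequence $\overline{\bfv}_n$ and an element $\overline{\bfv}\in W_e^{1,p,p}(I,\Vo(0),\Ho(0))$ with $\overline{\bfv}_n\overset{\ast}{\rightharpoondown}\overline{\bfv}$ in $L^\infty(I,L^2(\Omega))$, which is a weak solution with $\overline{\bfv}(0)=\bfv_0$ in $\Ho(0)$. The additional weak convergence $\overline{\bfv}_n\rightharpoonup\overline{\bfv}$ in $L^p(I,L^p(\Omega))$ follows from the a priori bound \eqref{eq:5.4}, the discrete Poincar\'e inequality \cite[Lemma A.9]{dkrt-ldg}, and uniqueness of weak limits. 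The convergence $\Dhnk\overline{\bfv}_n\rightharpoonup\bfD\overline{\bfv}$ in $L^p(I,L^p(\Omega))$ is obtained by applying \cite[Lemma A.37]{kr-phi-ldg} (the same tool used to derive \eqref{eq:weak} in the proof of Proposition \ref{SIP}) slice-wise in time, combined with the uniform bound on $\|\Dhnk\overline{\bfv}_n\|_{L^p(I,L^p(\Omega))}$ resulting from (\hyperlink{AN.4}{AN.4}) and \eqref{eq:5.4}.
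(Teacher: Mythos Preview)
Your proposal is correct and follows the same approach as the paper, which simply states that the result follows from Proposition~\ref{5.1}, Proposition~\ref{apriori}, Theorem~\ref{5.17} together with Propositions~\ref{3.3}, \ref{ldg}, \ref{SIP}, \ref{3.4.1}, \ref{3.4.2}. You supply more detail than the paper does, in particular the explicit observation that $c_1=0$ in (\hyperlink{AN.3}{AN.3}) (since the $B_n$ vanish on the diagonal and the $S_n$ coercivity estimates carry no $\|jv_n\|_Y^2$ term), which is precisely what removes the step-size restriction, and you correctly point to Proposition~\ref{3.3}/\cite[Lemma~A.37]{kr-phi-ldg} for the additional weak convergences of $\overline{\bfv}_n$ and $\Dhnk\overline{\bfv}_n$ in $L^p(I,L^p(\Omega))$.
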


\begin{remark}
If the solution of \eqref{eq:p-NS} is unique, e.g., cf.~\cite{mnrr}, if
$p\ge \frac{d+2}{2}$ (cf. \cite{BKP19} for $p\ge \frac{3d+2}{d+2}$)~and $\SSS\colon \mathbb{R}^{d\times d}\to \mathbb{R}^{d\times d}_{\textup{sym}}$ is strongly monotone, i.e., there exists a constant $\mu_0>0$ such that for every $\bfA,\bfB\in \mathbb{R}^{d\times d}$, it holds 
\begin{align*}
(\SSS(\bfA)-\SSS(\bfB)):(\bfA-\bfB)\ge \mu_0\,\abs{\bfA-\bfB}^2\,,
\end{align*}
then one does not have to pass to a subsequence in Theorem~\ref{thm:appl-p-NS}~(iii).
\end{remark}

\section{Numerical experiments}\label{sec:8}

In this section, we apply the scheme \eqref{eq:p-NSnon.1} (cf.~Algorithm \ref{eq:p-NSnon}) for the case  $S_n\coloneqq S_n^{\tiny \textup{\textsf{\textbf{LDG}}}}$ and $B_n\coloneqq  B_n^{\textup{II}}$, to solve
numerically the system~\eqref{eq:1.1}~with $\SSS\colon \mathbb{R}^{d\times d}\to\mathbb{R}^{d\times d}_{\sym}$, for every $\bfA\in\mathbb{R}^{d\times d}$ defined by  $${\SSS(\bfA) \coloneqq (\delta+\vert \bfA^{\textup{sym}}\vert)^{p-2}\bfA^{\textup{sym}}}\,,$$
where $\delta\coloneqq 1\textrm{e}{-}4$ and $ p\ge \frac{3d+2}{d+2}$. 

The scheme \eqref{eq:p-NSnon.1} (cf.~Algorithm \ref{eq:p-NSnon}) with  $S_n\coloneqq S_n^{\tiny \textup{\textsf{\textbf{LDG}}}}$ and $B_n\coloneqq  B_n^{\textup{II}}$, cf. \cite{kr-pnse-ldg-1,kr-pnse-ldg-2,kr-pnse-ldg-3}, for every $k=1,\dots, K$, $K\in \mathbb{N}$, and $\tau \coloneqq \frac{T}{K}$, $0<T<\infty$, can equivalently be re-written as a saddle point problem seeking for iterates
$(\bfv_{h_n}^{k}, q_{h_n}^{k})^\top\in \Vhnk\times \Qhnkco$
such that for every $(\bfz_{h_n},z_{h_n})^\top\in \Vhnk\times \Qhnkco$, it holds
\begin{align}\label{eq:primal1}
	\begin{aligned}
	&\hskp{d_\tau \bfv_{h_n}^{k} }{\bfz_{h_n}}+
	\bighskp{\SSS(\Dhnk \!\bfv_{h_n}^{k})}{\Dhnk
		\bfz_h}+\alpha \big\langle\SSS_{\smash{\avg{\abs{\Dhnk\!\bfv_h^{k}}}}}(h_n^{-1} \jump{\bfv_{h_n}^{k}\otimes
		\bfn}), \jump{\bfz_{h_n} \otimes
                \bfn}\big\rangle_{\Gamma_{h_n}} \hspace*{-5mm}
	\\&\quad-\tfrac{1}{2}\bighskp{\bfv_{h_n}^{k}\otimes \bfv_{h_n}^{k}}{\Dhnk 	\bfz_{h_n}}+\tfrac{1}{2}\bighskp{\bfz_{h_n}\otimes \bfv_{h_n}^{k}}{\Ghnk 	\bfv_{h_n}^{k}}-\bighskp{q_{h_n}^{k}}{\Divhnk\bfz_{h_n}}
	\\
	&=     \bighskp{[\bfg]_\tau^{k}}{\bfz_{h_n}}-\bighskp{[\bfG]_\tau^{k}}{\Ghnk\bfz_{h_n}}\,,\\
	\bighskp{\Divhk \bfv_{h_n}^{k}}{z_{h_n}}&=0\,.
	\end{aligned}
\end{align}   
For an simple implementation, for every $k=1,\dots ,K$, we replace $[\bfg]_\tau^{k}\in L^{p'}(\Omega)$ and $[\bfG]_\tau^{k}\in L^{p'}(\Omega)$ by $\bfg(t_k)\in L^{p'}(\Omega)$ and $\bfG(t_k)\in L^{p'}(\Omega)$, respectively.
Since the manufactured solutions
below,~cf.~\eqref{eq:manufactured_solutions},~are smooth with respect
to the time variable and, thus, the resulting
right-hand~side,~given~via~${\bfg\hspace*{-0.15em}\in\hspace*{-0.15em}
  L^{p'}(I,L^{p'}(\Omega))}$ and $\bfG\in L^{p'}(I,L^{p'}(\Omega))$,
is at least once continuously differentiable with respect to the time variable, i.e., satisfy at least $\bfg\in W^{1,p'}(I,L^{p'}(\Omega))$ and $\bfG\in W^{1,p'}(I,L^{p'}(\Omega))$,~we~have~that
\begin{align*}
		\sum_{k=1}^K{\int_{I_k^\tau}{\|
  [\bfg]_\tau^{k}-\bfg(t_k)\|_{p'}+\|
  [\bfG]_\tau^{k}-\bfG(t_k)\|_{p'}}}\, \textup{d}t\leq  \tau \, \big(\|\partial_t \bfg\|_{L^{p'}(I,L^{p'}(\Omega))}+\|\partial_t \bfG\|_{L^{p'}(I,L^{p'}(\Omega))}\big)\,.
\end{align*}
Thus, replacing $[\bfg]_\tau^{k}\in  L^{p'}(\Omega)$ and
$[\bfG]_\tau^{k}\in  L^{p'}(\Omega)$ by $\bfg(t_k)\in
L^{p'}(\Omega)$ and ${\bfG(t_k)\in L^{p'}(\Omega)}$, respectively,  $k= 1,\dots ,K$, does not effect the experimental results.

Then,  we approximate the iterates  $(\bfv_{h_n}^{k},q_{h_n}^{k})^\top\in \Vhnk(0)\times \Qo_{h_n}^\ell$, $k=1,\dots,K$, $K\in \mathbb{N}$, solving  \eqref{eq:p-NSnon.1}, deploying the Newton solver from
\mbox{\textsf{PETSc}} (version 3.17.3), cf.~\cite{LW10}, with an
absolute tolerance~of $\tau_{abs}\!=\! 1\textrm{e}{-}8$ and a relative
tolerance of $\tau_{rel}\!=\!1\textrm{e}{-}10$. The linear system
emerging in each Newton step is solved using a sparse direct solver
from \textsf{MUMPS} (version~5.5.0),~cf.~\cite{mumps}. 
We always choose the fixed parameter
$\alpha=2.5$. This choice is in accordance with the choice in
\mbox{\cite[Table~1]{dkrt-ldg}}. In the implementation, the uniqueness
of the discrete pressure is enforced via a zero mean condition.
All experiments were carried out using the finite element software package~\mbox{\textsf{FEniCS}} (version 2019.1.0), cf.~\cite{LW10}. \enlargethispage{6mm}

For our numerical experiments, we choose $\Omega\coloneqq (-1,1)^2$, $I\coloneqq(0,T)$, where $T=0.1$, and~linear~elements, i.e., $ \ell= 1 $. We  choose $\smash{\bfg \in L^{p'}(I,L^{p'}(\Omega))}$, $\smash{\bfG\in L^{p'}(I,L^{p'}(\Omega))}$ and $\smash{\bfv_0\in \Ho(0)}$ such that $\bfv\colon \overline{I\times\Omega}\to \mathbb{R}^2$ and $q\colon I\times\Omega \to \mathbb{R}$, for every $(t,x)^\top\coloneqq (t,x_1,x_2)^\top\in I\times\Omega$ defined by\footnote{The exact solutions
do not satisfy the homogeneous boundary conditions
\eqref{eq:1.1}$_3$. However, the error is mainly concentrated
around the singularity in the origin and hence the small
inconsistency with our theoretical~setup~does not have any influence
in the results.}
\begin{align}\label{eq:manufactured_solutions}
	\begin{aligned}
		\bfv(t,x)&\coloneqq t\,\vert x\vert^\beta(x_2,-x_1)^\top\,,\\  q(t,x)&\coloneqq t^2\,(\vert x\vert^{\gamma}-\langle\,\vert \!\cdot\!\vert^{\gamma}\,\rangle_\Omega)\,,
	\end{aligned}
\end{align}
are solutions of \eqref{eq:1.1}. For
$\rho\hspace*{-0.17em}\in\hspace*{-0.17em} \{0.05,0.1,0.2\}$, we choose
$\beta\hspace*{-0.17em}=\hspace*{-0.17em}\smash{\frac{2(\rho-1)}{p}}$,~which~yields that
${\vert \nabla^{\rho}\bfF(\bfD\bfv)\vert\hspace*{-0.17em} \in \hspace*{-0.17em} L^2(I\hspace*{-0.17em}\times\hspace*{-0.17em}\Omega)}$, and
$\gamma=\rho-\smash{\frac{2}{p'}}$, which
yields~that~$\vert \nabla^{\rho} q\vert\hspace*{-0.15em}\in\hspace*{-0.15em} L^{p'}(I\times\Omega)$. 
Here, $\bfF(\bfA)\coloneqq (\delta+\vert
\bfA^{\textup{sym}}\vert)^{\smash{\frac{p-2}2}}\bfA^{\textup{sym}}$ for all $\bfA\in \mathbb{R}^{2\times 2}$.  
This choice let us expect convergence with a convergence rate $\rho \frac{p'}{2}$ for the error quantities computed below.

We \hspace{-0.1mm}construct \hspace{-0.1mm}a \hspace{-0.1mm}initial \hspace{-0.1mm}triangulation \hspace{-0.1mm}$\mathcal
T_{h_0}$, \hspace{-0.1mm}where \hspace{-0.1mm}$h_0\hspace{-0.2em}=\hspace{-0.2em}\smash{\frac{1}{\sqrt{2}}}$, \hspace{-0.1mm}by \hspace{-0.1mm}subdividing~\hspace{-0.1mm}a~\hspace{-0.1mm}\mbox{rectangular} cartesian grid~into regular triangles with different orientations.  Finer triangulations~$\mathcal T_{h_n}$, where $h_{n+1}=\frac{h_n}{2}$, $n=1,\dots,5$, are 
obtained by
regular subdivision of the previous grid: Each \mbox{triangle} is subdivided
into four equal triangles by connecting the midpoints of the edges, i.e., applying the red-refinement rule, cf. \cite[Definition~4.8~(i)]{Ba16}.
Furthermore, we use the
time step-sizes
$\tau_n\coloneqq 2^{-n-2} $, i.e., $K_n\coloneqq 2^{n+2}$, $n\in \mathbb{N}$. 

As an approximation of the initial condition, for every $n\in \mathbb{N}$, we employed $\bfv_{h_n}^0\coloneqq \PiDG\bfv(0)\in \Vhk(0)$, which satisfies both $\bfv_{h_n}^0\to \bfv(0)$ in $\Ho(0)$  $(n\to \infty)$ and $\sup_{n\in\mathbb{N}}{\|\bfv_{h_n}^0\|_2}\leq  \|
\bfv(0)\|_2$.

Then, for the resulting series of triangulations $\mathcal T_{h_n}$, $n=1,\dots,5$, we apply~the~above Newton scheme to compute the corresponding numerical solutions $(\bfv_{h_n}^{k},q_{h_n}^{k})^\top\in V_{h_n}^\ell\times \Qhnkco$, $n=1,\dots,5$, $i=1,\dots,K_n$,
and the parabolic error quantities 
\begin{align}\label{eq:error_quantities}
	\begin{aligned}
e_{\bfF}^n&\coloneqq\bigg(\sum_{k=0}^{K_n}{\tau_n\|\bfF(\bD\bfv(t_k))-\bfF(\Dhnk\!\bfv_{h_n}^{k})\|^2_2}\bigg)^{\frac{1}{2}}\,,\\
e_{\jump{}}^n&\coloneqq\bigg(\sum_{k=0}^{K_n}{\tau_n\int_{\Gamma_h}{\varphi_{\avg{\abs{\Dhnk\!\bfv_{h_n}^{k}}}}(\bfv(t_k)-\bfv_{h_n}^{k})}\,\mathrm{d}s}\bigg)^{\frac{1}{2}}\,,\\
e_{\bfF^*}^n&\coloneqq\bigg(\sum_{k=0}^{K_n}{\tau_n\|\bfF^*(\SSS(\bD\bfv(t_k)))-\bfF^*(\PiDG\SSS(\Dhnk\!\bfv_{h_n}^{k}))\|^2_2}\bigg)^{\frac{1}{2}}\,,\\
e_{L^2}^n&\coloneqq\max_{0\leq k \leq K_n}{\|\bfv(t_k)-\bfv_{h_n}^{k}\|_2}\,,\\
e_{q}^n&\coloneqq\bigg(\sum_{k=0}^{K_n}{\tau_n\|q(t_k)-q_{h_n}^{k}\|^{p'}_{p'}}\bigg)^{\frac{1}{p'}}\,,
	\end{aligned}
\end{align}
where $\bfF^*(\bfA)\coloneqq (\delta^{p-1}+\vert
\bfA^{\textup{sym}}\vert)^{\smash{\frac{p'-2}2}}\bfA^{\textup{sym}}$ for all $\bfA\in \mathbb{R}^{d\times d}$.   
As an estimation of the convergence rates,  the experimental order of convergence (\texttt{EOC})
\begin{align*}
\texttt{EOC}(e^n)\coloneqq\frac{\log\big(\frac{e^n}{e^{n-1}}\big)}{\log\big(\frac{h_n+\tau_n}{h_{n-1}+\tau_{n-1}}\big)}\,,\quad
n=1,\dots,6\,, 
\end{align*}
where $e^n$, $n=1,\dots,6$, either denote
$e_{\bfF}^n$, $e_{\jump{}}^n$, $e_{\bfF^*}^n$, $e_{L^2}^n$, or $e_q^n$, $n=1,\dots,6$, respectively, is recorded.

For $p\in \{2.0, 2.5, 3.0\}$, $\rho\in \{0.05,0.1,0.2\}$ and a series of triangulations $\mathcal{T}_{h_i}$, $i= 1,\dots,6$, obtained by
regular, global refinement as described above, the EOC is computed and
presented in Table~\ref{tab1}, Table~\ref{tab2}, Table~\ref{tab3}, Table~\ref{tab4}, and Table~\ref{tab5}, respectively.
For the error quantities \eqref{eq:error_quantities}$_{1,2,3,5}$,   we observe the expected
convergence ratios of about
$\texttt{EOC}_n(e_{\bfF}^n)\approx \texttt{EOC}_n(e_{\jump{}}^n)\approx\texttt{EOC}_n(e_{\bfF^*}^n)\approx\texttt{EOC}_n(e_q^n)\approx \rho\frac{p'}{2} $. 
For \eqref{eq:error_quantities}$_4$, we observe the improved convergence ratio of about
 $\texttt{EOC}_n(e_{L^2}^n)\approx 1.0$, $n=3,\dots, 6$. 
 As a whole, the experimental results indicate the convergence of the  scheme \eqref{eq:p-NSnon.1} (cf. Algorithm \ref{eq:p-NSnon}). 

\begin{table}[H]
\setlength\tabcolsep{8pt}
\centering
\begin{tabular}{c |c|c|c|c|c|c|c|c|c|} \cmidrule(){1-10}
\multicolumn{1}{|c||}{\cellcolor{lightgray}$\rho$}	& \multicolumn{3}{c||}{\cellcolor{lightgray}0.05} & \multicolumn{3}{c||}{\cellcolor{lightgray}0.1}    & \multicolumn{3}{c|}{\cellcolor{lightgray}0.2}\\ 
\hline 

\multicolumn{1}{|c||}{\cellcolor{lightgray}\diagbox[height=1.1\line]{\vspace*{-0.6mm}\footnotesize$n$}{\\[-5.5mm]\footnotesize $p$}}
& \cellcolor{lightgray}2.0 & \cellcolor{lightgray}2.5  & \multicolumn{1}{c||}{\cellcolor{lightgray}3.0} & \multicolumn{1}{c|}{\cellcolor{lightgray}2.0}   & \cellcolor{lightgray}2.5  &  \multicolumn{1}{c||}{\cellcolor{lightgray}3.0}     & \multicolumn{1}{c|}{\cellcolor{lightgray}2.0}    & \cellcolor{lightgray}2.5  & \cellcolor{lightgray}3.0 \\ \hline\hline
\multicolumn{1}{|c||}{\cellcolor{lightgray}$3$}   & 0.055 & 0.019 & \multicolumn{1}{c||}{-0.03} & \multicolumn{1}{c|}{0.103} & 0.059 & \multicolumn{1}{c||}{-0.01} & \multicolumn{1}{c|}{0.198} & 0.137 & 0.045 \\ \hline
\multicolumn{1}{|c||}{\cellcolor{lightgray}$4$}   & 0.054 & 0.045 & \multicolumn{1}{c||}{0.031} & \multicolumn{1}{c|}{0.104} & 0.089 & \multicolumn{1}{c||}{0.066} & \multicolumn{1}{c|}{0.202} & 0.176 & 0.137 \\ \hline
\multicolumn{1}{|c||}{\cellcolor{lightgray}$5$}   & 0.053 & 0.049 & \multicolumn{1}{c||}{0.040} & \multicolumn{1}{c|}{0.103} & 0.094 & \multicolumn{1}{c||}{0.077} & \multicolumn{1}{c|}{0.202} & 0.183 & 0.151 \\ \hline
\multicolumn{1}{|c||}{\cellcolor{lightgray}$6$}   & 0.052 & 0.048 & \multicolumn{1}{c||}{0.040} & \multicolumn{1}{c|}{0.102} & 0.094   & \multicolumn{1}{c||}{0.078} & \multicolumn{1}{c|}{0.202} & 0.183 & 0.153 \\ \hline\hline
\multicolumn{1}{|c||}{\cellcolor{lightgray}$\rho\frac{p'}{2}\!$}     & 0.050 & 0.042 & \multicolumn{1}{c||}{0.0375} & \multicolumn{1}{c|}{0.100} & $0.083$ & \multicolumn{1}{c||}{0.075} & \multicolumn{1}{c|}{0.200} & 0.167 & 0.150 \\ \hline
\end{tabular}\vspace{-2mm}	\caption{\small Experimental order of convergence: $\texttt{EOC}_n(e_{\bfF}^n)$,~${n=3,\dots,6}$.}
\label{tab1}
\end{table}\vspace{-5mm}

\begin{table}[H]
\setlength\tabcolsep{8pt}
\centering
\begin{tabular}{c |c|c|c|c|c|c|c|c|c|} \cmidrule(){1-10}
\multicolumn{1}{|c||}{\cellcolor{lightgray}$\rho$}	& \multicolumn{3}{c||}{\cellcolor{lightgray}0.05} & \multicolumn{3}{c||}{\cellcolor{lightgray}0.1}    & \multicolumn{3}{c|}{\cellcolor{lightgray}0.2}\\ 
\hline 

\multicolumn{1}{|c||}{\cellcolor{lightgray}\diagbox[height=1.1\line]{\vspace*{-0.6mm}\footnotesize$n$}{\\[-5.5mm]\footnotesize $p$}}
& \cellcolor{lightgray}2.0 & \cellcolor{lightgray}2.5  & \multicolumn{1}{c||}{\cellcolor{lightgray}3.0} & \multicolumn{1}{c|}{\cellcolor{lightgray}2.0}   & \cellcolor{lightgray}2.5  &  \multicolumn{1}{c||}{\cellcolor{lightgray}3.0}     & \multicolumn{1}{c|}{\cellcolor{lightgray}2.0}    & \cellcolor{lightgray}2.5  & \cellcolor{lightgray}3.0 \\ \hline\hline
\multicolumn{1}{|c||}{\cellcolor{lightgray}$3$}   & 0.084 & 0.061 & \multicolumn{1}{c||}{-0.03} & \multicolumn{1}{c|}{0.135} & 0.109 & \multicolumn{1}{c||}{0.000} & \multicolumn{1}{c|}{0.237} & 0.204 & 0.049 \\ \hline
\multicolumn{1}{|c||}{\cellcolor{lightgray}$4$}   & 0.064 & 0.057 & \multicolumn{1}{c||}{0.033} & \multicolumn{1}{c|}{0.114} & 0.103 & \multicolumn{1}{c||}{0.069} & \multicolumn{1}{c|}{0.215} & 0.196 & 0.139 \\ \hline
\multicolumn{1}{|c||}{\cellcolor{lightgray}$5$}   & 0.056 & 0.052 & \multicolumn{1}{c||}{0.041} & \multicolumn{1}{c|}{0.106} & 0.098 & \multicolumn{1}{c||}{0.078} & \multicolumn{1}{c|}{0.206} & 0.189 & 0.152 \\ \hline
\multicolumn{1}{|c||}{\cellcolor{lightgray}$6$}   & 0.053 & 0.049 & \multicolumn{1}{c||}{0.040} & \multicolumn{1}{c|}{0.103} & 0.095   & \multicolumn{1}{c||}{0.078} & \multicolumn{1}{c|}{0.203} & 0.186 & 0.153 \\ \hline\hline
\multicolumn{1}{|c||}{\cellcolor{lightgray}$\rho\frac{p'}{2}\!$}     & 0.050 & 0.042 & \multicolumn{1}{c||}{0.0375} & \multicolumn{1}{c|}{0.100} & $0.083$ & \multicolumn{1}{c||}{0.075} & \multicolumn{1}{c|}{0.200} & 0.167 & 0.150 \\ \hline
\end{tabular}\vspace{-2mm}	\caption{\small Experimental order of convergence: $\texttt{EOC}_n(e_{\jump{}}^n)$,~${n=3,\dots,6}$.}
\label{tab2}
\end{table}\vspace{-5mm}

\begin{table}[H]
\setlength\tabcolsep{8pt}
\centering
\begin{tabular}{c |c|c|c|c|c|c|c|c|c|} \cmidrule(){1-10}
\multicolumn{1}{|c||}{\cellcolor{lightgray}$\rho$}	& \multicolumn{3}{c||}{\cellcolor{lightgray}0.05} & \multicolumn{3}{c||}{\cellcolor{lightgray}0.1}    & \multicolumn{3}{c|}{\cellcolor{lightgray}0.2}\\ 
\hline 

\multicolumn{1}{|c||}{\cellcolor{lightgray}\diagbox[height=1.1\line]{\vspace*{-0.6mm}\footnotesize$n$}{\\[-5.5mm]\footnotesize $p$}}
& \cellcolor{lightgray}2.0 & \cellcolor{lightgray}2.5  & \multicolumn{1}{c||}{\cellcolor{lightgray}3.0} & \multicolumn{1}{c|}{\cellcolor{lightgray}2.0}   & \cellcolor{lightgray}2.5  &  \multicolumn{1}{c||}{\cellcolor{lightgray}3.0}     & \multicolumn{1}{c|}{\cellcolor{lightgray}2.0}    & \cellcolor{lightgray}2.5  & \cellcolor{lightgray}3.0 \\ \hline\hline
\multicolumn{1}{|c||}{\cellcolor{lightgray}$3$}   & 0.055 & 0.019 & \multicolumn{1}{c||}{-0.03} & \multicolumn{1}{c|}{0.103} & 0.059 & \multicolumn{1}{c||}{0.000} & \multicolumn{1}{c|}{0.198} & 0.137 & 0.050 \\ \hline
\multicolumn{1}{|c||}{\cellcolor{lightgray}$4$}   & 0.054 & 0.045 & \multicolumn{1}{c||}{0.031} & \multicolumn{1}{c|}{0.104} & 0.089 & \multicolumn{1}{c||}{0.067} & \multicolumn{1}{c|}{0.202} & 0.176 & 0.138 \\ \hline
\multicolumn{1}{|c||}{\cellcolor{lightgray}$5$}   & 0.053 & 0.049 & \multicolumn{1}{c||}{0.040} & \multicolumn{1}{c|}{0.103} & 0.094 & \multicolumn{1}{c||}{0.077} & \multicolumn{1}{c|}{0.202} & 0.184 & 0.151 \\ \hline
\multicolumn{1}{|c||}{\cellcolor{lightgray}$6$}   & 0.052 & 0.048 & \multicolumn{1}{c||}{0.040} & \multicolumn{1}{c|}{0.102} & 0.094   & \multicolumn{1}{c||}{0.078} & \multicolumn{1}{c|}{0.202} & 0.184 & 0.153 \\ \hline\hline
\multicolumn{1}{|c||}{\cellcolor{lightgray}$\rho\frac{p'}{2}\!$}     & 0.050 & 0.042 & \multicolumn{1}{c||}{0.0375} & \multicolumn{1}{c|}{0.100} & $0.083$ & \multicolumn{1}{c||}{0.075} & \multicolumn{1}{c|}{0.200} & 0.167 & 0.150 \\ \hline
\end{tabular}\vspace{-2mm}	\caption{\small Experimental order of convergence: $\texttt{EOC}_n(e_{\bfF^*}^n)$,~${n=3,\dots,6}$.}
\label{tab3}
\end{table}\vspace{-5mm}

\begin{table}[H]
\setlength\tabcolsep{8pt}
\centering
\begin{tabular}{c |c|c|c|c|c|c|c|c|c|} \cmidrule(){1-10}
\multicolumn{1}{|c||}{\cellcolor{lightgray}$\rho$}	& \multicolumn{3}{c||}{\cellcolor{lightgray}0.05} & \multicolumn{3}{c||}{\cellcolor{lightgray}0.1}    & \multicolumn{3}{c|}{\cellcolor{lightgray}0.2}\\ 
\hline 

\multicolumn{1}{|c||}{\cellcolor{lightgray}\diagbox[height=1.1\line]{\vspace*{-0.6mm}\footnotesize$n$}{\\[-5.5mm]\footnotesize $p$}}
& \cellcolor{lightgray}2.0 & \cellcolor{lightgray}2.5  & \multicolumn{1}{c||}{\cellcolor{lightgray}3.0} & \multicolumn{1}{c|}{\cellcolor{lightgray}2.0}   & \cellcolor{lightgray}2.5  &  \multicolumn{1}{c||}{\cellcolor{lightgray}3.0}     & \multicolumn{1}{c|}{\cellcolor{lightgray}2.0}    & \cellcolor{lightgray}2.5  & \cellcolor{lightgray}3.0 \\ \hline\hline
\multicolumn{1}{|c||}{\cellcolor{lightgray}$3$}   & 0.885 & 1.040 & \multicolumn{1}{c||}{1.121} & \multicolumn{1}{c|}{0.935} & 1.070 & \multicolumn{1}{c||}{1.149} & \multicolumn{1}{c|}{1.009} & 1.131 & 1.186 \\ \hline
\multicolumn{1}{|c||}{\cellcolor{lightgray}$4$}   & 0.949 & 1.116 & \multicolumn{1}{c||}{1.180} & \multicolumn{1}{c|}{1.002} & 1.151 & \multicolumn{1}{c||}{1.212} & \multicolumn{1}{c|}{1.080} & 1.216 & 1.259 \\ \hline
\multicolumn{1}{|c||}{\cellcolor{lightgray}$5$}   & 0.998 & 1.149 & \multicolumn{1}{c||}{1.240} & \multicolumn{1}{c|}{1.045} & 1.189 & \multicolumn{1}{c||}{1.268} & \multicolumn{1}{c|}{1.137} & 1.265 & 1.321 \\ \hline
\multicolumn{1}{|c||}{\cellcolor{lightgray}$6$}   & 1.025 & 1.149 & \multicolumn{1}{c||}{1.247} & \multicolumn{1}{c|}{1.074} & 1.192   & \multicolumn{1}{c||}{1.281} & \multicolumn{1}{c|}{1.170} & 1.275   & 1.346 \\ \hline\hline
\multicolumn{1}{|c||}{\cellcolor{lightgray}$\rho\frac{p'}{2}\!$}     & 0.050 & 0.042 & \multicolumn{1}{c||}{0.0375} & \multicolumn{1}{c|}{0.100} & $0.083$ & \multicolumn{1}{c||}{0.075} & \multicolumn{1}{c|}{0.200} & 0.167 & 0.150 \\ \hline
\end{tabular}\vspace{-2mm}	\caption{\small Experimental order of convergence: $\texttt{EOC}_n(e_{L^2}^n)$,~${n=3,\dots,6}$.}
\label{tab4}
\end{table}\vspace{-5mm}

\begin{table}[H]
\setlength\tabcolsep{8pt}
\centering
\begin{tabular}{c |c|c|c|c|c|c|c|c|c|} \cmidrule(){1-10}
\multicolumn{1}{|c||}{\cellcolor{lightgray}$\rho$}	& \multicolumn{3}{c||}{\cellcolor{lightgray}0.05} & \multicolumn{3}{c||}{\cellcolor{lightgray}0.1}    & \multicolumn{3}{c|}{\cellcolor{lightgray}0.2}\\ 
\hline 

\multicolumn{1}{|c||}{\cellcolor{lightgray}\diagbox[height=1.1\line]{\vspace*{-0.6mm}\footnotesize$n$}{\\[-5.5mm]\footnotesize $p$}}
& \cellcolor{lightgray}2.0 & \cellcolor{lightgray}2.5  & \multicolumn{1}{c||}{\cellcolor{lightgray}3.0} & \multicolumn{1}{c|}{\cellcolor{lightgray}2.0}   & \cellcolor{lightgray}2.5  &  \multicolumn{1}{c||}{\cellcolor{lightgray}3.0}     & \multicolumn{1}{c|}{\cellcolor{lightgray}2.0}    & \cellcolor{lightgray}2.5  & \cellcolor{lightgray}3.0 \\ \hline\hline
\multicolumn{1}{|c||}{\cellcolor{lightgray}$3$}   & 0.103 & 0.107 &  \multicolumn{1}{c||}{0.165} & \multicolumn{1}{c|}{0.155} & 0.156 & \multicolumn{1}{c||}{0.219} & \multicolumn{1}{c|}{0.261} & 0.255 & 0.326 \\ \hline
\multicolumn{1}{|c||}{\cellcolor{lightgray}$4$}   & 0.071 & 0.072 & \multicolumn{1}{c||}{0.089} & \multicolumn{1}{c|}{0.121} & 0.122 & \multicolumn{1}{c||}{0.140} & \multicolumn{1}{c|}{0.222} & 0.221 & 0.242 \\ \hline
\multicolumn{1}{|c||}{\cellcolor{lightgray}$5$}   & 0.060 & 0.059 & \multicolumn{1}{c||}{0.064} & \multicolumn{1}{c|}{0.109} & 0.109 & \multicolumn{1}{c||}{0.115} & \multicolumn{1}{c|}{0.209} & 0.210 & 0.216 \\ \hline
\multicolumn{1}{|c||}{\cellcolor{lightgray}$6$}   & 0.055 & 0.054 & \multicolumn{1}{c||}{0.056} & \multicolumn{1}{c|}{0.104} & 0.104   & \multicolumn{1}{c||}{0.106} & \multicolumn{1}{c|}{0.204} & 0.205 & 0.207 \\ \hline\hline
\multicolumn{1}{|c||}{\cellcolor{lightgray}$\rho\frac{p'}{2}\!$}     & 0.050 & 0.042 & \multicolumn{1}{c||}{0.0375} & \multicolumn{1}{c|}{0.100} & $0.083$ & \multicolumn{1}{c||}{0.075} & \multicolumn{1}{c|}{0.200} & 0.167 & 0.150 \\ \hline
\end{tabular}\vspace{-2mm}	\caption{\small Experimental order of convergence: $\texttt{EOC}_n(e_q^n)$,~${n=3,\dots,6}$.}
\label{tab5}
\end{table}


\def\cprime{$'$} \def\cprime{$'$} \def\cprime{$'$}
\ifx\undefined\bysame
\newcommand{\bysame}{\leavevmode\hbox to3em{\hrulefill}\,}
\fi

\end{document}